\newtheorem{assumption}{Assumption}
\renewcommand\@biblabel[1]{#1.}
\newcommand{\Id}{\mathbb{I}}
\newcommand{\R}{\mathbb{R}}
\newcommand{\Rext}{\R\cup\{+\infty\}}
\newcommand{\abs}[1]{\left\vert#1\right\vert}
\newcommand{\absc}[1]{\big\vert#1\big\vert}
\newcommand{\set}[1]{\left\{#1\right\}}
\newcommand{\sets}[1]{\{#1\}}
\newcommand{\norm}[1]{\left\Vert#1\right\Vert}
\newcommand{\norms}[1]{\Vert#1\Vert}
\newcommand{\Eproof}{\hfill $\square$}
\newcommand{\prox}{\mathrm{prox}}
\newcommand{\kprox}[2]{\mathrm{prox}_{#1}\big(#2\big)}
\newcommand{\proj}{\mathrm{proj}}
\newcommand{\kproj}[2]{\mathrm{proj}_{#1}\left(#2\right)}
\newcommand{\relint}[1]{\mathrm{ri}\left(#1\right)}
\newcommand{\argmin}{\mathrm{arg}\!\displaystyle\min}
\newcommand{\dom}[1]{\mathrm{dom}(#1)}
\newcommand{\zero}[1]{\boldsymbol{0}}
\newcommand{\xb}{x}
\newcommand{\xopt}{x^{\star}}
\newcommand{\yopt}{y^{\star}}
\newcommand{\yb}{y}
\newcommand{\zb}{z}
\newcommand{\ub}{u}  
\newcommand{\vb}{v}
\renewcommand{\sb}{s}
\newcommand{\rb}{r}
\newcommand{\qb}{q}
\newcommand{\ab}{a}
\newcommand{\bb}{b}
\newcommand{\cb}{c}
\newcommand{\Ab}{A}
\newcommand{\Bb}{B}
\newcommand{\Ac}{\mathcal{A}}
\newcommand{\Db}{D}
\newcommand{\Qb}{Q}
\newcommand{\Qc}{\mathcal{Q}}
\newcommand{\Bc}{\mathcal{B}}
\newcommand{\Xb}{X}
\newcommand{\Yb}{Y}
\newcommand{\Zb}{Z}
\newcommand{\Yc}{\mathcal{Y}}
\newcommand{\Zc}{\mathcal{Z}}
\newcommand{\Sc}{\mathcal{S}}
\newcommand{\Lc}{\mathcal{L}}
\newcommand{\Cc}{\mathcal{C}}
\newcommand{\Nc}{\mathcal{N}}
\newcommand{\Kc}{\mathcal{K}}
\newcommand{\lbd}{\lambda} 
\newcommand{\Lbd}{\Lambda} 
\newcommand{\lbdopt}{\lambda^{\star}}
\newcommand{\iprods}[1]{\langle #1\rangle}
\newcommand{\kdist}[2]{\mathrm{dist}_{#1}\big(#2\big)}
\newcommand{\BigO}[1]{\mathcal{O}\left(#1\right)}
\newcommand{\beforesubsec}{\vspace{-3.75ex}}
\newcommand{\aftersubsec}{\vspace{-2.2ex}}
\newcommand{\beforesec}{\vspace{-3.5ex}}
\newcommand{\aftersec}{\vspace{-2.25ex}}
\newcommand{\beforesubsubsec}{\vspace{-1.5ex}}
\newcommand{\aftersubsubsec}{\vspace{-2.5ex}}
\begin{document}

\title{Proximal Alternating Penalty Algorithms for Nonsmooth Constrained Convex Optimization
}

\titlerunning{Proximal Alternating Penalty Algorithms for Constrained Convex Optimization}        

\author{Quoc Tran-Dinh
}


\institute{Quoc Tran-Dinh \at
		Department of Statistics and Operations Research,  University of North Carolina at Chapel Hill\\
		333 Hanes Hall, CB\# 3260, UNC Chapel Hill, NC 27599-3260, USA\\ 
		Email: \url{quoctd@email.unc.edu}
}

\date{Received: date / Accepted: date}

\maketitle

\begin{abstract}
We develop two new proximal alternating penalty algorithms  to solve a wide range class of constrained convex optimization problems.
Our approach mainly relies on a novel combination of the classical quadratic penalty, alternating minimization, Nesterov's acceleration, and adaptive strategy for parameters.
The first algorithm is designed to solve generic and possibly nonsmooth constrained convex problems without requiring any Lipschitz gradient continuity or strong convexity, while achieving the best-known $\BigO{\frac{1}{k}}$-convergence rate  in a non-ergodic sense, where $k$ is the iteration counter.
The second algorithm is also designed to solve non-strongly convex, but semi-strongly convex problems. 
This algorithm can achieve the best-known $\BigO{\frac{1}{k^2}}$-convergence rate on the primal constrained problem.
Such a rate is obtained in two cases: (i)~averaging only on the iterate sequence of the strongly convex term, or (ii) using two proximal operators of this term without averaging.
In both algorithms, we allow one to linearize the second subproblem to use the proximal operator of the corresponding objective term.
Then, we customize our methods to solve different convex problems, and lead to new variants.
As a byproduct, these algorithms preserve the same convergence guarantees as in our main algorithms.
We verify our theoretical development via different numerical examples and compare our methods with some existing state-of-the-art algorithms.
\end{abstract}

\keywords{
\small
Proximal alternating algorithm \and quadratic penalty method \and accelerated scheme \and constrained convex optimization \and first-order methods \and convergence rate.}
\subclass{90C25   \and 90-08}

\beforesec
\section{Introduction}\label{sec:intro}
\aftersec
\noindent\textbf{Problem statement:}
We develop novel numerical methods to solve the following generic and possibly nonsmooth constrained convex optimization problem:
\begin{equation}\label{eq:constr_cvx}
F^{\star} := 
\left\{\begin{array}{ll}
\displaystyle\min_{\zb := (\xb, \yb)\in\R^{p}} & \Big\{ F(\zb) := f(\xb) + g(\yb) \Big\} \vspace{0.5ex}\\
\mathrm{s.t.} & \Ab\xb + \Bb\yb - \cb \in \Kc,
 \end{array}\right.
\end{equation}
where $f : \R^{p_1}\to\Rext$ and $g : \R^{p_2}\to\Rext$ are two proper, closed, and convex functions; $p := p_1 + p_2$;
$\Ab\in\R^{n\times p_1}$, $\Bb\in\R^{n\times p_2}$, and $\cb\in\R^n$ are given; and $\Kc\subseteq\R^n$ is a nonempty, closed, and convex subset.

Problem \eqref{eq:constr_cvx}, on the one hand, covers a wide range class of classical constrained convex optimization problems in practice including conic programming (e.g., linear, convex quadratic, second-order cone, and semidefinite programming), convex optimization over graphs and networks, geometric programming, monotropic convex programming, and model predictive controls (MPC) \cite{BenTal2001,Boyd2004,Nocedal2006}.
On the other hand, it can be used as a unified template to formulate many recent convex optimization models arising in signal and image processing, machine learning, and statistics ranging from unconstrained to constrained settings, see, e.g., \cite{Boyd2011,Nesterov2004,sra2012optimization}.
In the latter case, the underlying convex problems obtained from these applications are often challenging to solve due to their high-dimensionality and nonsmoothness.
Therefore, classical optimization methods such as sequential quadratic programming, and interior-point methods are no longer efficient to solve them \cite{Nocedal2006}.
This fundamental challenge has opened a door for the use of first-order methods \cite{Beck2009,Chambolle2011,Nesterov2004}.
Various first-order methods have been proposed to solve large-scale instances of \eqref{eq:constr_cvx} including [proximal] gradient and fast gradient, primal-dual, splitting, conditional gradient, mirror descent, coordinate descent, and stochastic gradient-type methods, see, e.g, \cite{Bauschke2011,Chambolle2011,du2017selective,Jaggi2013,johnson2013accelerating,kiwiel1999proximal,Nesterov2004,Nesterov2012}.
While discussing them all is out of scope of this paper, we focus on some strategies such as penalty, alternating direction, and primal-dual methods which most relate to our work in this paper.

\noindent\textbf{Our approach and related work:}
The approach in this paper relies on a novel combination of the quadratic penalty \cite{Fletcher1987,Nocedal2006}, alternating miminization \cite{Bauschke2011,kiwiel1999proximal,Tseng1991a}, adaptive strategy for parameters \cite{TranDinh2015b}, and Nesterov's accelerated methods \cite{auslender2006interior,Nesterov2004,tseng2008accelerated}.
The quadratic penalty method is a classical optimization framework to handle constrained problems, and can be found in classical text books, e.g., \cite{Fletcher1987,Nocedal2006}. 
It is often used in nonlinear optimization, and has recently been studied in first-order convex optimization methods, see \cite{Lan2013,necoara2015complexity}.
This method is often inefficient if it stands alone. 
In this paper, we combine it with other ideas and show that it is indeed useful. 
Our second idea is to use the alternating strategy dated back from the work of J. von Neumann \cite{Boyd2011}, but has recently become extremely popular, see, e.g., \cite{Boyd2011,Esser2010,Goldstein2012,He2012,shefi2016rate,Ouyang2014}. 
We exploit this old technique to split the coupling constraint $\Ab\xb + \Bb\yb -\cb \in\Kc$ and the proximal operator of $f + g$ into each individual one on $\xb$ and $\yb$.
Note that the alternating idea has been widely used in many papers including \cite{du2017selective,Goldfarb2012,kiwiel1999proximal} but often for unconstrained settings.
However, the key idea in our approach is perhaps Nesterov's acceleration scheme \cite{Nesterov2004} and the adaptive strategy for parameters in \cite{TranDinh2015b} that allow us to accelerate the convergence rate of our methods as well as to automatically update the penalty and other parameters without tuning.

In the context of primal-dual frameworks, our algorithms work on the primal problem \eqref{eq:constr_cvx} and also have convergence guarantees on this problem in terms of objective residual and feasibility violation.
Hence, they are different from primal-dual methods such as Chambolle-Pock's scheme \cite{Chambolle2011,chambolle2016ergodic}, alternating minimization (AMA) \cite{Goldstein2012,Tseng1991a}, and alternating direction methods of multipliers (ADMM) \cite{Eckstein1992,Chen1994,Boyd2011,Goldstein2012,Ouyang2014}. 
Note that primal-dual algorithms, AMA, and ADMM are classical methods and their convergence guarantees were proved in many early works, e.g., \cite{Eckstein1992,Chen1994,Tseng1991a}. 
Nevertheless, their convergence rate and iteration-complexity have only recently been studied under different assumptions including strong convexity, Lipschitz gradient continuity, and error bound-type conditions, see, e.g., \cite{Chambolle2011,Davis2014,Davis2014a,Davis2014b,Goldstein2012,He2012,shefi2016rate} and the references quoted therein.

Existing state-of-the-art primal-dual methods often achieve the best-known $\BigO{\frac{1}{k}}$-rate without  strong convexity and Lipschitz gradient continuity, where $k$ is the iteration counter.
However, such a rate is often obtained via an ergodic sense or a weighted averaging sequence \cite{Chambolle2011,Davis2014,Davis2014a,Davis2014b,He2012,shefi2016rate,Ouyang2014}.  
Under a stronger condition such as either strong convexity or Lipschitz gradient continuity, one can achieve the best-known $\BigO{\frac{1}{k^2}}$-convergence rate as shown in, e.g., \cite{Chambolle2011,Davis2014,Davis2014a,Davis2014b,Ouyang2014}.\footnote{A recent work in \cite{attouch2017rate} showed an ${o}\left(\frac{1}{k}\right)$ or ${o}\left(\frac{1}{k^2}\right)$ rate depending on problem structures.}  
A recent work by Xu \cite{xu2017accelerated} showed that ADMM methods can achieve the $\BigO{\frac{1}{k^2}}$ convergence rate requiring only the strong convexity on one objective term (either $f$ or $g$).
Such a rate is achieved via weighted averaging sequences. 
This is fundamentally different from the fast ADMM variant studied in \cite{Goldstein2012}.
Note that the $\BigO{\frac{1}{k^2}}$ rate is also attained in AMA methods \cite{Goldstein2012} under the same assumption. 
Nevertheless, this rate is on the dual problem, and can be viewed as FISTA \cite{Beck2009} applying to the dual problem of \eqref{eq:constr_cvx}. 
To the best of our knowledge, the  $\BigO{\frac{1}{k^2}}$ on the primal problem  has not been shown yet.
Recently, we proposed two algorithms in \cite{TranDinh2015b} to solve \eqref{eq:constr_cvx} that achieve $\BigO{\frac{1}{k}}$ convergence rate without any strong convexity or Lipschitz gradient continuity.
Moreover, our guarantee for the first algorithm, Algorithm \ref{alg:A1}, is given in a non-ergodic sequence, i.e., in the last iterate.

Despite of using the quadratic penalty method as in \cite{Lan2013,necoara2015complexity} to handle the constraints, our approach in this paper is fundamentally different from  \cite{Lan2013}, where we apply the alternating scheme to decouple the joint variable $\zb = (\xb,\yb)$ and treat them alternatively between $\xb$ and $\yb$.
We also exploit the homotopy strategy in \cite{TranDinh2015b} to automatically update the penalty parameter instead of fixing or tuning as in \cite{Lan2013,necoara2015complexity}.
In terms of theoretical guarantee, \cite{Lan2013} characterized the iteration-complexity by appropriately choosing a set of parameters depending on the desired accuracy and the feasible set diameters, while \cite{necoara2015complexity} assumed that the subproblem could be solved by Nesterov's schemes up to a certain accuracy.
Our guarantee does not use any of these techniques, which avoids their drawbacks.
Our methods are also different from AMA or ADMM where we do not require Lagrange multipliers, but rather stay in the primal space of \eqref{eq:constr_cvx}.
{In fact, our idea is closely related to \cite{Goldfarb2012,kiwiel1999proximal},  but is still essentially different. 
The methods in \cite{kiwiel1999proximal} are originated from the bundle method and also do not require the smoothness of the objective functions.
The algorithms in \cite{Goldfarb2012} are  alternating linearization-type methods which entail the smoothness of the objective functions.}
We handle the constrained problem \eqref{eq:constr_cvx} directly and  update the penalty parameter. 
We also do not assume the smoothness of $f$ and $g$. 
Our algorithm is also different from the dual smoothing methods in \cite{Necoara2009} and \cite{Becker2011a}, where they simply added a proximity function to the primal objective function to obtain a Lipschitz gradient continuous dual function, and applied Nesterov's accelerated schemes. 
These methods accelerate on the dual space.

In terms of structure assumption, our first algorithm achieves the same $\BigO{\tfrac{1}{k}}$-rate as in \cite{Chambolle2011,Davis2014,Davis2014a,Davis2014b,He2012,shefi2016rate,Ouyang2014} without any assumption except for the existence of a saddle point. Moreover, the rate of convergence is on the last iterate, which is important for sparse and low-rank optimization (since averaging essentially destroys the sparsity or low-rankness of the approximate solutions).
Under a semi-strong convexity, i.e., either $f$ or $g$ is strongly convex, our second method can accelerate up to the $\BigO{\frac{1}{k^2}}$-convergence rate aka \cite{xu2017accelerated}, but it has certain advantages compared to \cite{xu2017accelerated}.
First, it is a primal method without Lagrange multipliers. 
Second, it linearizes the penalty term in the $y$-subproblem (see Algorithm~\ref{alg:A2} for details), which reduces the per-iteration complexity.
Third, it either takes averaging only on the $y$-sequence or uses its last iterate with one additional proximal operator of $g$. 
Finally, the $y$-averaging sequence is weighted.

\vspace{0.25ex}
\noindent\textbf{Our contribution:}
Our contribution can be summarized as follows:
\begin{itemize}
\vspace{-1.5ex}
\item[(a)] We propose a new proximal alternating penalty algorithm called PAPA to solve the generic constrained convex problem \eqref{eq:constr_cvx}.
We show that, under the existence of a saddle point, our method achieves the best-known $\BigO{\frac{1}{k}}$ convergence rate on both the objective residual and the feasibility violation on \eqref{eq:constr_cvx} without strong convexity, Lipschitz gradient continuity, and the boundedness of the domain of $f$ and $g$.
Moreover, our guarantee is on the last iterate of the primal variable instead of its averaging sequence.
In addition, we allow one to linearize the penalty term in the second subproblem of $\yb$ (see Step~\ref{eq:alter_scheme} of Algorithm~\ref{alg:A1} below) that significantly reduces the per-iteration complexity.
We also flexibly update all the algorithmic parameters using analytical update rules.

\item[(b)] If one objective term of \eqref{eq:constr_cvx} is strongly convex (i.e., either $f$ or $g$ is strongly convex), then we propose a new variant that combines both Nesterov's optimal scheme (or FISTA) \cite{Beck2009,Nesterov1983} and Tseng's variant \cite{auslender2006interior,tseng2008accelerated} to solve \eqref{eq:constr_cvx}.
We prove that this variant can achieve up to $\BigO{\frac{1}{k^2}}$-convergence rate on both the objective residual and the feasibility violation.
Such a rate is attained by either averaging only on the $\xb/\yb$-sequence or using one additional proximal operator of $f/g$.

\item[(c)] We customize our algorithms to obtain new variants for solving \eqref{eq:constr_cvx} and their extensions and special cases including the sum of three objective terms, and unconstrained composite convex problems.
Some of these variants are new.
We also interpret our algorithms as new variants of the primal-dual first-order method.
As a byproduct, these variants preserve the same convergence rate as in the proposed algorithms. 
We also discuss restarting strategies for our methods to significantly improve their practical performance.
The convergence guarantee of this strategy will be presented in our forthcoming work \cite{TranDinh2017f}.
\vspace{-1ex}
\end{itemize}

Let us clarify the following points of our contribution.
First, by utilizing the results in \cite{woodworth2016tight}, one can show that under only the convexity (respectively,  the semi-strong convexity) and the existence of a saddle point of \eqref{eq:constr_cvx}, our convergence rate $\BigO{\frac{1}{k}}$ (respectively, $\BigO{\frac{1}{k^2}}$) is optimal in the sense of black-box models for optimization complexity theory \cite{Nemirovskii1983} as shown in \cite{Tran-Dinh2018}.
The non-ergodic rate is very important for sparse and low-rank optimization since averaging often destroys the sparsity or low-rankness as we previously mentioned.
It is also important in image processing to preserve image sharpness. 
Second, the linearization of the $y$-subproblem in Algorithm~\ref{alg:A1} and Algorithm~\ref{alg:A2} is useful when $\Ab$ is an orthogonal operator. 
This allows us to only use the proximal operator of both $f$ and $g$ and significantly reduces the per-iteration complexity compared to classical AMA and ADMM.  
Third, when applying our method to a composite convex problem, we obtain new variants which are different from existing works.
Finally, we allow one to handle general constraints in $\Kc$ without shifting the problem into linear equality constraints.
This is very convenient to handle inequality constraints, convex cones, or boxed constraints as long as the projection onto $\Kc$ is efficient to compute, see Subsection \ref{subsubsec:conic} for a concrete conic programming example.

\vspace{0.25ex}
\noindent\textbf{Paper organization:}
The rest of this paper is organized as follows.
Section~\ref{sec:prelim_results} recalls the dual problem of \eqref{eq:constr_cvx}, and states a fundamental assumption and the optimality condition.
It also defines the quadratic penalty function for  \eqref{eq:constr_cvx} and proves a key lemma.
Section~\ref{sec:papa_algs} presents the main contribution with two algorithms and their convergence analysis.
Section~\ref{sec:variants_extensions} deals with some extensions and variants of the two proposed methods.
Section~\ref{sec:num_examples} provides several numerical examples to illustrate our theoretical development and compares with existing methods.
For clarity of exposition, all technical proofs are deferred to Appendix \ref{sec:appendix}.

\beforesec
\section{Preliminaries: Duality,  optimality condition, and quadratic penalty}\label{sec:prelim_results}
\aftersec
We first define the dual problem of \eqref{eq:constr_cvx} and recall its optimality condition. 
Then, we define the quadratic penalty function for \eqref{eq:constr_cvx} and prove  a key lemma on the objective residual and the feasibility violation.

\beforesubsec
\subsection{Basic notation}
\aftersubsec
We work on finite dimensional Euclidean spaces, $\R^p$ and $\R^n$, equipped with a standard inner product $\iprods{\cdot,\cdot}$ and Euclidean norm $\norm{\cdot} := \iprods{\cdot, \cdot}^{1/2}$.
Given a nonempty, closed, and convex set $\Kc\subseteq\R^n$, we use $\Nc_{\Kc}(\xb)$ for its normal cone at $\xb$, $\relint{\Kc}$ for its relative interior, and define $\Kc^{\circ} := \set{\xb\in\R^n \mid \iprods{\xb, \ub} \leq 1, ~\ub\in\Kc}$ for its polar set; 
we also use $\delta_{\Kc}(\cdot)$ and $s_{\Kc}(\cdot)$  to denote its indicator and support functions, respectively.
If $\Kc$ is a cone, then $\Kc^{\ast} := \set{\xb\in\R^n \mid \iprods{\xb, \ub} \geq 0, ~\ub\in\Kc}$ stands for its dual cone.
Given a proper, closed, and convex function $f$, $\dom{f}$ denotes its domain, $\partial{f}(\cdot)$ is its subdifferential, $f^{\ast}(\yb) := \sup_{\xb}\set{\iprods{\yb,\xb} - f(\xb)}$ is its Fenchel conjugate, and 
\begin{equation}\label{eq:prox_oper}
\kprox{\gamma f}{\xb} := \argmin_{\ub\in\R^p}\set{ f(\ub) + \tfrac{1}{2\gamma}\norms{\ub - \xb}^2}
\end{equation}
is called its the proximal operator, where $\gamma > 0$.
In this case, we have
\begin{equation}\label{eq:Moreau_identity}
\kprox{\gamma f}{\xb} + \gamma \kprox{f^{\ast}/\gamma}{\xb/\gamma} = \xb,
\end{equation}
which is known as Moreau's identity.
We say that $f$ is $L_f$-Lipschitz gradient continuous if it is differentiable, and its gradient $\nabla{f}$ is Lipschitz continuous on its domain with the Lipschitz constant $L_f \in [0, +\infty)$.
We say that $f$ is $\mu_f$-strongly convex if $f(\cdot) - \frac{\mu_f}{2}\norms{\cdot}^2$ is convex, where $\mu_f > 0$ is its strong convexity parameter.
Without loss of generality, we assume that $f$ is $\mu_f$-strongly convex with $\mu_f \geq 0$ to cover also convex functions.
For more details, we refer the reader to \cite{Bauschke2011,Rockafellar1970}.
The notation ``$:=$'' stands for ``is defined as''.

\beforesubsec
\subsection{Dual problem, fundamental assumption, and KKT condition} 
\aftersubsec
Let us define the Lagrange function associated with \eqref{eq:constr_cvx} as
\begin{equation*}
\Lc(\xb, \yb, \rb, \lbd) := f(\xb) + g(\yb) - \iprods{\Ab\xb + \Bb\yb - \rb - \cb, \lbd},
\end{equation*}
where $\lbd$ is the vector of Lagrange multipliers, and $r \in\Kc$.
The dual function is defined as
\begin{equation*}
d(\lbd) :=  {\!\!\!} \displaystyle\max_{(\xb,\yb)\in\dom{F}}{\!\!\!}\Big\{\iprods{\Ab\xb + \Bb\yb - \cb, \lbd} - f(\xb) - g(\yb) \Big\} = f^{\ast}(A^{\top}\lbd) + g^{\ast}(\Bb^{\top}\lbd) - \iprods{\cb,\lbd},
\end{equation*}
where $\dom{F} := \dom{f}\times\dom{g}$, and $f^{\ast}$ and $g^{\ast}$ are the Fenchel conjugates of $f$ and $g$, respectively.
The dual problem of \eqref{eq:constr_cvx} is
\begin{equation}\label{eq:dual_prob}
{\!\!\!}D^{\star} := {\!\!\!}\min_{\lbd \in \R^n} \set{ D(\lbd) := d(\lbd) + s_{\Kc}(-\lbd) \equiv f^{\ast}(A^{\top}\lbd) + g^{\ast}(\Bb^{\top}\lbd) - \iprods{\cb,\lbd} + s_{\Kc}(-\lbd)},{\!\!\!}
\end{equation}
where $s_{\Kc}(\vb) := \sup\set{\iprods{\vb,\rb} \mid \rb\in\Kc}$ is the support function of $\Kc$.
If $\Kc$ is a nonempty, closed, and convex cone, then \eqref{eq:dual_prob} reduces to
\begin{equation*}
D^{\star} := \min_{\lbd\in -\Kc^{\ast}}\set{ D(\lbd) :=  f^{\ast}(A^{\top}\lbd) + g^{\ast}(\Bb^{\top}\lbd) - \iprods{\cb,\lbd}},
\end{equation*}
where $\Kc^{\ast}$ is the dual cone of $\Kc$.

We say that a point $(\xb^{\star},\yb^{\star}, \rb^{\star}, \lbd^{\star}) \in \dom{f}\times\dom{g}\times\Kc\times\R^n$ is a saddle point of the Lagrange function $\Lc$ if for $(\xb,\yb)\in\dom{F}$, $\rb\in\Kc$ and $\lbd\in\R^n$, one has
\begin{equation}\label{eq:saddle_point}
\Lc(\xb^{\star}, \yb^{\star}, \rb^{\star}, \lbd) \leq \Lc(\xb^{\star}, \yb^{\star},\rb^{\star}, \lbd^{\star}) \leq \Lc(\xb, \yb, \rb, \lbd^{\star}).
\end{equation}
We denote by $\Sc^{\star} := \set{ (\xb^{\star}, \yb^{\star}, \rb^{\star}, \lbd^{\star})}$ the set of saddle points of $\Lc$, by $\Zc^{\star} := \set{(\xb^{\star}, \yb^{\star})}$, 
and by $\Lbd^{\star} := \set{\lbd^{\star}}$ the set of the optimal multipliers $\lbd^{\star}$.

In this paper, we rely on the following assumption.

\begin{assumption}\label{as:A1}
Both functions $f$ and $g$ are proper, closed, and convex, and $\Kc$ is a nonempty, closed, and convex set in $\R^n$.
The set of saddle points $\Sc^{\star}$ of $\Lc$ is nonempty, and the optimal value $F^{\star}$ is finite and attainable at some $(\xb^{\star},\yb^{\star}) \in \Zc^{\star}$.
\end{assumption}

We assume that Assumption~\ref{as:A1} holds throughout this paper without recalling it in the sequel.
Under this assumption, the optimality condition (or the KKT condition) of \eqref{eq:constr_cvx} can be written as
\begin{equation}\label{eq:opt_cond}
0 \in \partial{f}(\xopt) - \Ab^{\top}\lbdopt, ~~0 \in \partial{g}(\yopt) - \Bb^{\top}\lbdopt, ~~ \lbd^{\star} \in \Nc_{\Kc}(\Ab\xopt + \Bb\yopt - \cb),
\end{equation}
where $\Nc_{\Kc}(\cdot)$ is the normal cone of $\Kc$.
Let us assume that the following Slater condition holds:
\begin{equation*}
\relint{\dom{F}}\cap\set{(\xb, \yb) \mid \Ab\xb + \Bb\yb - \cb \in \relint{\Kc}} \neq\emptyset.
\end{equation*}
Then the optimality condition \eqref{eq:opt_cond} is necessary and sufficient for the strong duality of \eqref{eq:constr_cvx} and \eqref{eq:dual_prob} to hold, i.e., $F^{\star} + D^{\star} = 0$, and the dual solution is attainable and $\Lambda^{\star}$ is bounded, see, e.g., \cite{Bertsekas1999}.

\vspace{-1ex}
\beforesubsec
\subsection{Quadratic penalty function and its properties}
\aftersubsec
Let us define the quadratic penalty function $\Phi_{\rho}$ for the constrained problem \eqref{eq:constr_cvx} as
\begin{equation}\label{eq:Phi_func}
\Phi_{\rho}(\zb) := f(\xb) + g(\yb) + \rho\psi(\xb, \yb),~~~\text{where}~~\psi(\xb, \yb) := \tfrac{1}{2}\kdist{\Kc}{\Ab\xb + \Bb\yb - \cb}^2,
\end{equation}
$\zb := (\xb, \yb)$, $\rho > 0$ is a penalty parameter, and $\kdist{\Kc}{u}$ is the Euclidean distance from $u$ to $\Kc$.
Let us denote by $\proj_{\Kc}(\cdot)$ the projection operator onto $\Kc$. 
Then, we can write $\psi(\cdot)$ in \eqref{eq:Phi_func} as
\begin{equation*}
\psi(\xb, \yb) := \tfrac{1}{2}\min_{\rb\in\Kc}\norms{\rb - (\Ab\xb + \Bb\yb - \cb)}^2 = \tfrac{1}{2}\norms{\Ab\xb + \Bb\yb - \cb - \kproj{\Kc}{\Ab\xb + \Bb\yb - \cb}}^2.
\end{equation*}
From the definition of $\Phi_{\rho}$, we have the following result, whose proof is similar to \cite[Lemma 1]{TranDinh2015b}; however, we provide here a short proof for completeness.

\begin{lemma}\label{le:approx_opt_cond}
Let $\Phi_{\rho}(\cdot)$ be the quadratic penalty function defined by \eqref{eq:Phi_func}, and $S_{\rho}(\zb) := \Phi_{\rho}(\zb) - F^{\star}$.
Then, for any $\zb = (\xb, \yb) \in\dom{F}$, and $\lbd^{\star}\in\Lbd^{\star}$, we have
\begin{equation}\label{eq:approx_opt_cond}
\begin{cases}
-\norms{\lambda^{\star}}\kdist{\Kc}{\Ab\xb+\Bb\yb - \cb} &\leq F(\zb) - F^{\star}\leq S_{\rho}(\zb) - \frac{\rho}{2}\kdist{\Kc}{\Ab\xb + \Bb\yb - \cb}^2, \vspace{1.5ex}\\
\kdist{\Kc}{\Ab\xb+\Bb\yb - \cb} & \leq \tfrac{1}{\rho}\left[\norms{\lbd^{\star}} + \sqrt{\norms{\lbd^{\star}}^2 + 2\rho S_{\rho}(\zb)}\right],
\end{cases}
\end{equation}
where $\norms{\lbd^{\star}}^2 + 2\rho S_{\rho}(\zb) \geq  \tfrac{\rho^2}{2}\norms{\Ab\xb + \Bb\yb - \cb - \kproj{\Kc}{\Ab\xb + \Bb\yb - \cb } + \tfrac{1}{\rho}\lbd^{\star}}^2  \geq 0$.
\end{lemma}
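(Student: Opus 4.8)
The plan is to derive the three inequalities from the definition of the penalty function together with the saddle point property of $\lbd^\star$. Write $\rb := \kproj{\Kc}{\Ab\xb+\Bb\yb-\cb}$ and $\sb := \Ab\xb+\Bb\yb-\cb - \rb$, so that $\kdist{\Kc}{\Ab\xb+\Bb\yb-\cb} = \norms{\sb}$ and $\psi(\xb,\yb) = \tfrac12\norms{\sb}^2$. The key elementary fact I would invoke is the projection inequality $\iprods{\Ab\xb+\Bb\yb-\cb - \rb, \ub - \rb} \le 0$ for all $\ub \in \Kc$; since $\rb\in\Kc$, taking $\ub = \rb^\star \in\Kc$ (the $\rb$-component of a saddle point, which satisfies $\Ab\xopt+\Bb\yopt-\cb = \rb^\star$) gives control of $\iprods{\sb, \Ab\xb+\Bb\yb-\cb - \rb^\star}$.

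\textbf{Lower bound on $F(\zb) - F^\star$.} From the saddle point inequality \eqref{eq:saddle_point}, $\lbd^\star \in \Nc_{\Kc}(\rb^\star)$ and $F^\star = f(\xopt)+g(\yopt) = \Lc(\xopt,\yopt,\rb^\star,\lbd^\star)$, which in turn is $\le \Lc(\xb,\yb,\rb,\lbd^\star)$ for the specific choice $\rb = \kproj{\Kc}{\Ab\xb+\Bb\yb-\cb}$. This yields $F^\star \le F(\zb) - \iprods{\sb,\lbd^\star}$, hence $F(\zb) - F^\star \ge \iprods{\sb,\lbd^\star} \ge -\norms{\lbd^\star}\,\norms{\sb}$ by Cauchy--Schwarz, which is the left-hand inequality. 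The upper bound $F(\zb) - F^\star \le S_\rho(\zb) - \tfrac{\rho}{2}\norms{\sb}^2$ is immediate by just rearranging $S_\rho(\zb) = F(\zb) - F^\star + \rho\psi(\xb,\yb) = F(\zb) - F^\star + \tfrac{\rho}{2}\norms{\sb}^2$.

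\textbf{Feasibility bound.} Combining the two halves of the first line gives $-\norms{\lbd^\star}\norms{\sb} \le S_\rho(\zb) - \tfrac{\rho}{2}\norms{\sb}^2$, i.e. $\tfrac{\rho}{2}\norms{\sb}^2 - \norms{\lbd^\star}\norms{\sb} - S_\rho(\zb) \le 0$. Viewing this as a quadratic inequality in $t := \norms{\sb} \ge 0$ with positive leading coefficient, $t$ must lie below the larger root, giving $\norms{\sb} \le \tfrac{1}{\rho}\big[\norms{\lbd^\star} + \sqrt{\norms{\lbd^\star}^2 + 2\rho S_\rho(\zb)}\big]$, which is the second displayed inequality. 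For the final claim, note that the discriminant being nonnegative is exactly $\norms{\lbd^\star}^2 + 2\rho S_\rho(\zb)\ge 0$; to get the sharper bound $\norms{\lbd^\star}^2 + 2\rho S_\rho(\zb) \ge \tfrac{\rho^2}{2}\norms{\sb + \tfrac1\rho\lbd^\star}^2$, expand the right-hand square as $\tfrac{\rho^2}{2}\norms{\sb}^2 + \rho\iprods{\sb,\lbd^\star} + \tfrac12\norms{\lbd^\star}^2$ and use $2\rho S_\rho(\zb) = 2\rho(F(\zb)-F^\star) + \rho^2\norms{\sb}^2 \ge 2\rho\iprods{\sb,\lbd^\star} + \rho^2\norms{\sb}^2$, where the lower bound on $F(\zb)-F^\star$ from the first step was used (with the tighter form $F(\zb)-F^\star \ge \iprods{\sb,\lbd^\star}$).

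\textbf{Main obstacle.} The only subtlety is the correct use of the saddle point / normal-cone property to produce the inequality $F^\star \le F(\zb) - \iprods{\sb,\lbd^\star}$; in particular one must choose the slack variable $\rb$ in $\Lc(\xb,\yb,\rb,\lbd^\star)$ to be the projection $\kproj{\Kc}{\Ab\xb+\Bb\yb-\cb}$ rather than an arbitrary feasible point, and verify $\Ab\xopt+\Bb\yopt-\cb \in \Kc$ so that $\rb^\star$ is a legitimate choice on the other side. Everything else is routine algebra with the quadratic-in-$t$ argument and one completion of the square.
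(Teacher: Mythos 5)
Your proposal is correct and follows essentially the same route as the paper: the saddle-point inequality evaluated at $\rb = \kproj{\Kc}{\Ab\xb+\Bb\yb-\cb}$ plus Cauchy--Schwarz gives the first line, the second line follows from the resulting quadratic inequality in $t = \kdist{\Kc}{\Ab\xb+\Bb\yb-\cb}$, and the final nonnegativity claim is the same completion-of-the-square combined with the bound $F(\zb)-F^{\star}\ge\iprods{\sb,\lbd^{\star}}$ (the paper adds the completed square to the first inequality, you expand it—algebraically the same step). The projection inequality you flag as the "key fact" is actually not needed anywhere; only $\rb\in\Kc$ and the saddle-point/complementarity relations are used, exactly as in the paper.
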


\begin{proof}
Since \eqref{eq:saddle_point} holds, we have the following inequality for any $\rb\in\Kc$:
\begin{equation*}
F(\zb^{\star}) = \Lc(\zb^{\star}, \rb^{\star}, \lbd^{\star}) \leq \Lc(\zb, \rb, \lbd^{\star}) = F(\zb) - \iprods{\lambda^{\star}, \Ab\xb + \Bb\yb - \rb - \cb}.
\end{equation*}
Therefore, using this, $\rb = \rb^{\ast} = \kproj{\Kc}{\Ab\xb + \Bb\yb - \cb} \in \Kc$, and  $S_{\rho}(\cdot)$, we obtain
\begin{equation}\label{eq:lm21_est1}
\begin{array}{ll}
S_{\rho}(\zb) - \frac{\rho}{2}\kdist{\Kc}{\Ab\xb + \Bb\yb - \cb}^2 &=  F(\zb) - F(\zb^{\star}) \geq \iprods{\lambda^{\star}, \Ab\xb + \Bb\yb - \rb^{\ast} - \cb} \vspace{1ex}\\
&\geq -\norms{\lbd^{\star}}\norms{\Ab\xb + \Bb\yb - \cb - \rb^{\ast}} \vspace{1ex}\\
& = -\norms{\lbd^{\star}}\kdist{\Kc}{\Ab\xb + \Bb\yb - \cb},
\end{array}
\end{equation}
which is the first inequality of \eqref{eq:approx_opt_cond}.
Next, since $\frac{\rho}{2}\norms{\ub - \rb^{\ast}}^2 + \frac{1}{2\rho}\norm{\lbd^{\star}}^2 + \iprods{\lbd^{\star}, \ub} \geq \frac{\rho}{2}\norms{\ub - \rb^{\ast} + \frac{1}{\rho}\lbd^{\star}} \geq 0$ for $\ub = \Ab\xb + \Bb\yb - \cb$, we obtain
\begin{equation*}
\tfrac{\rho}{2}\kdist{\Kc}{\Ab\xb + \Bb\yb - \cb}^2 + \tfrac{1}{2\rho}\norms{\lbd^{\star}}^2 + \iprods{\lbd^{\star}, \Ab\xb + \Bb\yb \!-\! \cb \!-\! \rb^{\ast}} \!=\! \tfrac{\rho}{2}\norms{\Ab\xb + \Bb\yb - \cb - \rb^{\ast} + \tfrac{1}{\rho}\lbd^{\star}}^2 \geq 0.
\end{equation*}
Summing up this estimate and the first inequality of \eqref{eq:lm21_est1}, we obtain 
\begin{equation*}
S_{\rho}(\zb) + \tfrac{1}{2\rho}\norms{\lbd^{\star}}^2 \geq  \tfrac{\rho}{2}\norms{\Ab\xb + \Bb\yb - \cb - \kproj{\Kc}{\Ab\xb + \Bb\yb - \cb } + \tfrac{1}{\rho}\lbd^{\star}}^2  \geq 0.
\end{equation*}
The second inequality of \eqref{eq:approx_opt_cond} is a consequence of the first one by solving the following quadratic inequation $\rho t^2 - 2\norms{\lbd^{\star}}t - 2S_{\rho}(\zb) \leq 0$ in $t$ with $t \geq 0$.
\Eproof
\end{proof}

\vspace{-2.5ex}
\beforesec
\section{Proximal Alternating Penalty Algorithms}\label{sec:papa_algs}
\aftersec
\vspace{-0.5ex}
Our algorithms rely on an alternating strategy applying to the quadratic penalty function $\Phi_{\rho}(\cdot)$ defined by \eqref{eq:Phi_func}, Nesterov's accelerated scheme \cite{Nesterov2004}, and the adaptive strategy for parameters in \cite{TranDinh2015b}.
We present our first algorithm for non-strongly convex objective functions in Subsection \ref{subsec:nonstrongly_convex_case}, and then describe the second algorithm for semi-strongly convex objective function in Subsection~\ref{subsec:papa_for_scvx}.

\beforesubsec
\subsection{PAPA for non-strongly convex problems}\label{subsec:nonstrongly_convex_case}
\aftersubsec
At each iteration $k\geq 0$ of our algorithm, given $\hat{\zb}^k := (\hat{\xb}^k, \hat{\yb}^k) \in \dom{F}$ and $\gamma_k \geq 0$, we need to solve the following $x$-subproblem: 
\begin{equation}\label{eq:x_cvx_subprob}
\xb^{k+1} \in \Sc_{\gamma_k}(\hat{\xb}^k,\hat{\yb}^k;\rho_k) := \argmin_{\xb\in\R^{p_1}}\set{ f(\xb) + \rho_k\psi(\xb, \hat{\yb}^k) + \tfrac{\gamma_k}{2}\norms{\xb - \hat{\xb}^k}^2 }.
\end{equation}
When $\gamma_k = 0$, $\Sc_{\gamma_k}(\cdot)$ can be a multivalued mapping.
Since  $\Sc_{\gamma_k}(\cdot) \neq\emptyset$ for $\gamma_k > 0$, without loss of generality, we assume that $\Sc_{\gamma_k}(\cdot)$ is nonempty for any $\gamma_k \geq 0$. 

We consider the case where both $f$ and $g$ in \eqref{eq:constr_cvx} are non-strongly convex (i.e., both $\mu_f$ and $\mu_g$ are zero) and not Lipschitz gradient continuous.

\vspace{-1ex}
\beforesubsubsec
\subsubsection{The algorithm} 
\aftersubsubsec
We present our first algorithm to solve \eqref{eq:constr_cvx} in Algorithm \ref{alg:A1}, where we name it by the ``Proximal Alternating Penalty Algorithm'' (shortly, PAPA).

\begin{algorithm}[hpt!]\caption{{\!}(\textit{\textbf{P}roximal \textbf{A}lternating \textbf{P}enalty \textbf{A}lgorithm} - Nonstrong convexity){\!\!\!\!}}\label{alg:A1}
\begin{normalsize}
\begin{algorithmic}[1]
	\State {\hskip0ex}\textbf{Initialization:} 
	Choose an initial point $(x^0, y^0) \in \dom{F}$, and two initial values $\rho_0 > 0$ and $\gamma_0 \geq 0$. 
	Set $\hat{x}^0 := x^0$, and $\hat{\yb}^0 := \yb^0$.
	\vspace{1ex}
	\State \textbf{For $k := 0$ to $k_{\max}$ perform}
		\vspace{1ex}
		\State{\hskip2ex}\label{eq:alter_scheme}Update 
		$\left\{\begin{array}{ll}
                  \xb^{k+1} & \in \Sc_{\gamma_k}(\hat{\xb}^k,\hat{\yb}^k; \rho_k), \vspace{1ex}\\ 
                 \yb^{k+1}  &:=  \kprox{g/(\rho_k\norms{\Bb}^2)}{\hat{\yb}^k - \tfrac{1}{\norms{\Bb}^2}\nabla_y{\psi}(\xb^{k+1}, \hat{\yb}^k)}, \vspace{1ex}\\
                 (\hat{\xb}^{k+1}, \hat{\yb}^{k+1} ) &:=  (\xb^{k+1}, \yb^{k+1}) + \tfrac{k}{k+2}(\xb^{k+1} - \xb^k, \yb^{k+1} - \yb^k).
                 \end{array}\right.$
                 \vspace{1ex}
                 \State{\hskip2ex}\label{step:A1_param_update}Update $\rho_{k+1} :=  (k+2)\rho_0$~~and~~$\gamma_{k+1} := \gamma_0(k+2)$.
	\State\textbf{End~for}
\end{algorithmic}
\end{normalsize}
\end{algorithm}
 
\noindent 
Algorithm \ref{alg:A1} looks rather simple with four lines in the main loop.
Before analyzing its convergence, we make the following comments:

$\mathrm{(a)}$~Firstly, Algorithm~\ref{alg:A1} adopts the idea of Nesterov's first accelerated method in \cite{Beck2009,Nesterov1983} to accelerate the penalized problem $\min_{\xb,\yb}\Phi_{\rho}(\xb,\yb)$ studied, e.g., in \cite{Lan2013,Nocedal2006}. 
However, it first alternates between $\xb$ and $\yb$ to decouple the quadratic penalty term $\psi(\xb,\yb)$ compared to \cite{Lan2013}.
Next, it linearizes the second subproblem in $\yb$ to use $\prox_g$.
Finally, it is combined with the adaptive strategy for parameters in \cite{TranDinh2015b} to update the penalty parameter $\rho$ so that its last iterate sequence $\sets{(\xb^k, \yb^k)}$ converges to a solution $(\xb^{\star},\yb^{\star})$ of the original problem \eqref{eq:constr_cvx}.

$\mathrm{(b)}$~Secondly, if the $\xb$-subproblem \eqref{eq:x_cvx_subprob} with $\gamma_k = 0$, i.e.:
\begin{equation}\label{eq:unreg_subprob_x}
\xb^{k+1} \in \mathcal{S}(\hat{\yb}^k;\rho_k) := \argmin_{\xb}\big\{ f(\xb) + \rho_k\psi(\xb, \hat{\yb}^k) \big\}
\end{equation}
is solvable (not necessarily unique, e.g., when $\dom{f}$ is compact or $\Ab$ is orthogonal), then the main step, Step~\ref{eq:alter_scheme}, in Algorithm~\ref{alg:A1} reduces to
\begin{equation}\label{eq:alter_scheme_1b}
\left\{\begin{array}{ll}
\xb^{k+1} & \in \mathcal{S}(\hat{\yb}^k;\rho_k), \vspace{1ex}\\
\yb^{k+1} & := \kprox{g/(\rho_k\norms{\Bb}^2)}{\hat{\yb}^k - \tfrac{1}{\norms{\Bb}^2}\nabla_y{\psi}(\xb^{k+1}, \hat{\yb}^k)}, \vspace{1ex}\\
\hat{\yb}^{k+1}  &:=  \yb^{k+1} + \tfrac{k}{k+2}(\yb^{k+1} - \yb^k).
\end{array}\right.
\end{equation}
In this case, only one parameter $\rho_k$ is involved in \eqref{eq:alter_scheme_1b}, and the term $\norms{\xb^0 - \xb^{\star}}^2$ disappears in the bounds of Theorem~\ref{th:convergence1} below.
If $\Ab = \Id$, the identity operator, then the two first steps of \eqref{eq:alter_scheme_1b} becomes
\begin{equation*}
\xb^{k+1} :=  \kprox{f/\rho_k}{\cb - \Bb\hat{\yb}^k}~~\text{and}~~\yb^{k+1} :=  \kprox{g/(\rho_k\norms{\Bb}^2)}{\hat{\yb}^k - \tfrac{1}{\norms{\Bb}^2}\nabla_y{\psi}(\xb^{k+1}, \hat{\yb}^k)},
\end{equation*}
which only require the proximal operator of $f$ and $g$.

$\mathrm{(c)}$~Thirdly, the gradient $\nabla_y{\psi}(\xb^{k+1}, \hat{\yb}^k)$ is computed explicitly as 
\begin{equation*}
\nabla_y{\psi}(\xb^{k+1}, \hat{\yb}^k) = \Bb^{\top}(\hat{\ub}^k  - \proj_{\Kc}(\hat{\ub}^k)),~~~\text{where}~~~\hat{\ub}^k :=  \Ab\xb^{k+1} + \Bb\hat{\yb}^k - \cb,
\end{equation*}
which requires matrix-vector products $\Ab\xb$, $\Bb\yb$, and $\Bb^{\top}\ub$ each, and one projection onto $\Kc$.
When $\Kc$ is a simple set (e.g., box, cone, or simplex), the cost of computing $\proj_{\Kc}$ is minor.

$\mathrm{(d)}$~Fourthly, the convergence guarantee in Theorem~\ref{th:convergence1} is on the last iterate $(\xb^k, \yb^k)$ (i.e., without averaging) compared to, e.g., \cite{Chambolle2011,Davis2014,He2012,shefi2016rate}. 

$\mathrm{(e)}$~Fifthly, the update rule of $\rho_k$ and $\gamma_k$ at Step~\ref{step:A1_param_update} is not heuristically tuned.
The choice of $\rho_0$ trades-off the feasibility and the objective residual in the bound \eqref{eq:convergence1} below.
In our implementation, we choose $\rho_0 :=  \frac{1}{\norms{\Bb}}$ by default.

$\mathrm{(f)}$~Finally, for Algorithm~\ref{alg:A1}, we can also linearize the subproblem~\eqref{eq:x_cvx_subprob} at Step~\ref{eq:alter_scheme} to obtain the following closed form solution using the proximal operator of $f$:
\begin{equation*}
\xb^{k+1} := \kprox{f/\hat{\gamma}_k}{\hat{\xb}^k - \tfrac{\rho_k}{\hat{\gamma}_k}\nabla_x{\psi}(\hat{\xb}^k,\hat{\yb}^k)}.
\end{equation*}
In this case, we modify $\yb^{k+1} :=  \kprox{g/(\rho_k\norms{\Bb}^2)}{\hat{\yb}^k - \tfrac{1}{\norms{\Bb}^2}\nabla_y{\psi}(\hat{\xb}^{k}, \hat{\yb}^k)}$, which is no longer alternating.
Therefore, we can compute $\xb^{k+1}$ and $\yb^{k+1}$ in parallel.
The analysis of this variant is similar to \cite[Theorem 3]{TranDinh2015b}, and we omit the details.

We highlight that Algorithm~\ref{alg:A1} is different from alternating linearization method in \cite{Goldfarb2012}, alternating minimization (AMA) \cite{Tseng1991a}, and alternating direction methods of multipliers (ADMM) in the literature \cite{He2012,Ouyang2014,xu2017accelerated} as discussed in the introduction.

\beforesubsubsec
\subsubsection{Convergence analysis}
\aftersubsubsec
The convergence of Algorithm~\ref{alg:A1} is presented as follows.
\begin{theorem}\label{th:convergence1}
Let $\sets{(\xb^k, \yb^k)}$ be the sequence generated by Algorithm~\ref{alg:A1} for solving \eqref{eq:constr_cvx}. 
Then, for $k\geq 1$, we have
\begin{equation}\label{eq:convergence1}
\begin{cases}
\absc{F(\zb^{k}) - F^{\star}} &\leq ~~\dfrac{\max\sets{ \rho_0 R_p^2, ~2\norms{\lbd^{\star}} R_d }}{2\rho_0 k}, \vspace{1ex}\\
\kdist{\Kc}{\Ab\xb^{k} + \Bb\yb^{k} - \cb} &\leq \dfrac{R_d}{\rho_0 k },
\end{cases} 
\end{equation}
where $R_p^2 := \gamma_0\norms{\xb^0 - \xb^{\star}}^2 + \rho_0\norms{\Bb}^2\norms{\yb^0 - \yb^{\star}}^2$ and $R_d := \norms{\lbd^{\star}} + \sqrt{\norms{\lbd^{\star}}^2 + \rho_0R_p^2}$.
Consequently, the non-ergodic convergence rate of Algorithm~\ref{alg:A1} is $\BigO{\tfrac{1}{k}}$, i.e., $\vert F(\zb^k) - F^{\star}\vert \leq \BigO{\tfrac{1}{k}}$ and  $\mathrm{dist}_{\Kc} \big(\Ab\xb^k + \Bb\yb^k - \cb\big) \leq \BigO{\tfrac{1}{k}}$.
\end{theorem}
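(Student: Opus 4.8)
The plan is to use the accelerated analysis machinery in the spirit of Nesterov/FISTA applied to the penalized function $\Phi_{\rho_k}(\cdot)$, but with the penalty parameter $\rho_k$ and the regularization parameter $\gamma_k$ varying with $k$. First I would establish a one-step descent-type inequality for the alternating step (Step~\ref{eq:alter_scheme}). For the $\xb$-update \eqref{eq:x_cvx_subprob}, the optimality condition together with strong convexity of the added quadratic term $\tfrac{\gamma_k}{2}\norms{\xb-\hat\xb^k}^2$ yields, for all $\xb$, a bound of the form $f(\xb^{k+1}) + \rho_k\psi(\xb^{k+1},\hat\yb^k) + \tfrac{\gamma_k}{2}\norms{\xb^{k+1}-\xb}^2 \le f(\xb) + \rho_k\psi(\xb,\hat\yb^k) + \tfrac{\gamma_k}{2}\norms{\hat\xb^k-\xb}^2 - \tfrac{\gamma_k}{2}\norms{\xb^{k+1}-\hat\xb^k}^2$. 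For the $\yb$-update I would use that $\psi(\xb^{k+1},\cdot)$ has Lipschitz gradient with constant $\rho_k\norms{\Bb}^2$ in $\yb$ (since $\psi$ is $1$-smooth in its argument and $\Bb$ appears linearly), so the linearized prox step gives the standard descent lemma estimate $g(\yb^{k+1}) + \rho_k\psi(\xb^{k+1},\yb^{k+1}) \le g(\yb) + \rho_k\psi(\xb^{k+1},\hat\yb^k) + \rho_k\langle \nabla_y\psi(\xb^{k+1},\hat\yb^k), \yb - \hat\yb^k\rangle + \tfrac{\rho_k\norms{\Bb}^2}{2}(\norms{\hat\yb^k-\yb}^2 - \norms{\yb^{k+1}-\yb}^2)$ for all $\yb$; combining with convexity of $\psi(\xb^{k+1},\cdot)$ turns the inner-product term into $\rho_k\psi(\xb^{k+1},\yb) - \rho_k\psi(\xb^{k+1},\hat\yb^k)$.

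Adding these two inequalities and choosing the test points $\xb = \xb^{\star}$, $\yb = \yb^{\star}$ (plus using $\psi(\xb^{k+1},\yb^{\star}) \le$ a bound via joint convexity, or the mixed term $\psi(\xb^{k+1},\yb^\star)$ vs $\psi(\xb^\star,\yb^\star)=0$ via convexity in $\xb$ — this needs care since $\psi$ is jointly convex) gives a per-iteration inequality of the form $\Phi_{\rho_k}(\zb^{k+1}) - F^{\star} + \tfrac{\gamma_k}{2}\norms{\xb^{k+1}-\xb^\star}^2 + \tfrac{\rho_k\norms{\Bb}^2}{2}\norms{\yb^{k+1}-\yb^\star}^2 \le \tfrac{\gamma_k}{2}\norms{\hat\xb^k-\xb^\star}^2 + \tfrac{\rho_k\norms{\Bb}^2}{2}\norms{\hat\yb^k-\yb^\star}^2 + (\text{penalty-drift term from } \rho_k\psi \text{ being evaluated at } S_{\rho_k}) $. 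Then I would introduce the weighted potential/Lyapunov function $\mathcal{V}_k := a_k\big(\Phi_{\rho_{k-1}}(\zb^{k}) - F^{\star} + \tfrac{\rho_{k-1}}{2}\psi(\xb^k,\yb^k)?\big) + \tfrac{\gamma_0}{2}\norms{\xb^k - \xb^\star}^2 + \tfrac{\rho_0\norms{\Bb}^2}{2}\norms{\yb^k-\yb^\star}^2$ with weights $a_k = k$ (matching the momentum coefficient $\tfrac{k}{k+2}$, i.e. $t_k = \tfrac{k+2}{2}$), and verify — using the specific update rules $\rho_{k} = (k+2)\rho_0$, $\gamma_k = (k+2)\gamma_0$ — that the momentum combination of the previous inequalities telescopes, i.e. $\mathcal{V}_{k+1} \le \mathcal{V}_k$. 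This is the classical "estimate sequence" telescoping; the role of $\rho_k$ growing linearly is precisely to make $a_k\rho_k = a_{k-1}\rho_{k-1} + \rho_0 a_k$ type identities line up, and similarly for $\gamma_k$.

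From $\mathcal{V}_k \le \mathcal{V}_0$ I would extract $S_{\rho_{k-1}}(\zb^k) = \Phi_{\rho_{k-1}}(\zb^k) - F^{\star} \le \tfrac{\mathcal{V}_0}{a_k} = \tfrac{R_p^2 + (\text{const})}{2k}$ roughly, where $R_p^2 = \gamma_0\norms{\xb^0-\xb^\star}^2 + \rho_0\norms{\Bb}^2\norms{\yb^0-\yb^\star}^2$ appears exactly from $\mathcal{V}_0$ (noting $S_{\rho_0}(\zb^0)$ can be absorbed or bounded). Finally I would invoke Lemma~\ref{le:approx_opt_cond}: plug $\zb = \zb^k$ and use the bound on $S_{\rho_{k-1}}(\zb^k)$ into the two inequalities of \eqref{eq:approx_opt_cond} with $\rho = \rho_{k-1} = (k+1)\rho_0$; the feasibility bound $\kdist{\Kc}{\cdot} \le \tfrac{1}{\rho_{k-1}}[\norms{\lbd^\star} + \sqrt{\norms{\lbd^\star}^2 + 2\rho_{k-1}S_{\rho_{k-1}}(\zb^k)}]$ becomes $\le \tfrac{R_d}{\rho_0 k}$ after substituting $2\rho_{k-1}S_{\rho_{k-1}}(\zb^k) \le \rho_0 R_p^2$, and the objective residual bound follows from sandwiching $F(\zb^k)-F^\star$ between $-\norms{\lbd^\star}\kdist{\Kc}{\cdot}$ and $S_{\rho_{k-1}}(\zb^k)$, which yields the $\max\{\rho_0 R_p^2, 2\norms{\lbd^\star}R_d\}$ numerator.

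The main obstacle I anticipate is the bookkeeping in the telescoping step: because $\rho_k$ and $\gamma_k$ both change every iteration, the "objective" part $\Phi_{\rho_k}$ of the Lyapunov function is measured against a moving penalty, so one must carefully track the extra terms $(\rho_{k} - \rho_{k-1})\psi(\zb^k) = \rho_0\psi(\zb^k)$ that appear when passing from $\Phi_{\rho_{k-1}}(\zb^k)$ to $\Phi_{\rho_k}(\zb^k)$, and show they are dominated (this is where $\psi \ge 0$ and the sign of the penalty drift matter, and likely where the precise factor $\tfrac{k}{k+2}$ — rather than the more familiar FISTA $t_k$ — is pinned down). The secondary subtlety is handling the cross term $\psi(\xb^{k+1},\yb^\star)$: since $\psi$ is only jointly convex (not separately giving $\psi(\xb^{k+1},\yb^\star)=0$), one needs $\psi(\xb^{k+1},\yb^\star) \le \psi(\xb^\star,\yb^\star) + \langle\nabla_x\psi(\xb^{k+1},\yb^\star), \xb^{k+1}-\xb^\star\rangle$-type manipulations combined with the $\xb$-subproblem optimality, or alternatively working directly with the value $\Sc_{\gamma_k}$ which already optimizes over $\xb$; getting this to close cleanly against the $\tfrac{\gamma_k}{2}\norms{\cdot}^2$ terms is the delicate part. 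All remaining steps are routine once the potential-function inequality is in hand.
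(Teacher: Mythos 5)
Your overall architecture (a one-step inequality for the alternating step, an accelerated potential telescoped under $\tau_k=\tfrac{1}{k+1}$, $\rho_k=\rho_0(k+1)$, $\gamma_k=\gamma_0(k+1)$, and then Lemma~\ref{le:approx_opt_cond} with $\rho_{k-1}$ to produce \eqref{eq:convergence1}) is the same as the paper's, and your final conversion step is exactly right. However, the point you dismiss as bookkeeping is where your plan does not close, and the mechanism you invoke for it is wrong in sign. The drift $(1-\tau_k)(\rho_k-\rho_{k-1})\psi(\zb^k)$ that appears when replacing $\Phi_{\rho_k}(\zb^k)$ by $\Phi_{\rho_{k-1}}(\zb^k)$ is a \emph{positive} quantity precisely because $\rho_k>\rho_{k-1}$ and $\psi\ge 0$, so ``$\psi\ge 0$ and the sign of the penalty drift'' cannot absorb it. If you only use plain convexity of $\psi$ (bounding its linearization by $\psi(\zb^k)$ at $\zb^k$ and by $0$ at $\zb^{\star}$), the telescoped inequality retains an uncompensated term of order $k\,\rho_0\,\psi(\zb^k)$ at iteration $k$, and controlling it requires exactly the $\BigO{1/k}$ feasibility decay being proved, i.e.\ the argument is circular. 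The paper absorbs the drift through strengthened inequalities your sketch omits: because $\tfrac12\kdist{\Kc}{\cdot}^2$ is $1$-smooth, the lower quadratic bound in \eqref{eq:lipschitz_xy} and the projection inequality give $\ell_k(\zb^{\star})\le-\tfrac12\norms{\hat{\sb}^{k+1}}^2$ and $\ell_k(\zb^k)\le\psi(\xb^k,\yb^k)-\tfrac12\norms{\sb^k-\hat{\sb}^{k+1}}^2$ (Lemma~\ref{le:property_of_lk}); weighted by $(1-\tau_k,\tau_k)$ these negative quadratics yield the remainder $R_k$ in \eqref{eq:Rk_est}, which dominates the drift exactly when $\rho_{k-1}\ge(1-\tau_k)\rho_k$ — an identity under the stated update. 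This is the key idea of the proof, not routine bookkeeping.

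Two smaller repairs are also needed. First, your handling of the cross term $\psi(\xb^{k+1},\yb^{\star})$ uses $\nabla_x\psi(\xb^{k+1},\yb^{\star})$, while the $x$-optimality condition only supplies $\nabla_x\psi(\xb^{k+1},\hat{\yb}^k)$, so the two do not cancel; the clean route is to linearize $\psi$ in \emph{both} blocks at $(\xb^{k+1},\hat{\yb}^k)$ (the function $\ell_k$ in \eqref{eq:lin_func}) and then exploit $\Ab\xb^{\star}+\Bb\yb^{\star}-\cb\in\Kc$ together with $\iprods{\hat{\ub}^k-\proj_{\Kc}(\hat{\ub}^k),\ \rb^{\star}-\proj_{\Kc}(\hat{\ub}^k)}\le 0$, which is how Lemma~\ref{le:property_of_lk} is proved. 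Second, the Lyapunov function must be written in the auxiliary sequence $(\tilde{\xb}^k,\tilde{\yb}^k)$ of \eqref{eq:update_xtilde}, not in $(\xb^k,\yb^k)$: after multiplying by $k+1$ one gets constant weights $\tfrac{\gamma_0}{2}\norms{\tilde{\xb}^k-\xb^{\star}}^2+\tfrac{\rho_0\norms{\Bb}^2}{2}\norms{\tilde{\yb}^k-\yb^{\star}}^2$, and the recursion $(k+1)S_{\rho_k}(\zb^{k+1})+a_{k+1}\le k\,S_{\rho_{k-1}}(\zb^k)+a_k$ telescopes; from there the application of Lemma~\ref{le:approx_opt_cond} is routine, as you say.
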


The proof of Theorem~\ref{th:convergence1} requires the following key lemma, whose proof can be found in Appendix~\ref{apdx:le:key_estimate}.

\begin{lemma}\label{le:key_estimate}
Let $\sets{(\xb^k, \yb^k, \hat{\xb}^k, \hat{\yb}^k)}$ be the sequence generated by Algorithm~\ref{alg:A1}.
Then, $(\hat{\xb}^k, \hat{\yb}^k)$ can be interpreted as 
\begin{equation}\label{eq:update_xtilde}
\begin{array}{lll}
&(\hat{\xb}^k, \hat{\yb}^k) &= (1-\tau_k)(\xb^k, \yb^k) + \tau_k(\tilde{\xb}^k,\tilde{\yb}^k), \vspace{1.25ex}\\
\text{with}~~&(\tilde{\xb}^{k+1}, \tilde{\yb}^{k+1}) &:= (\tilde{\xb}^k, \tilde{\yb}^k) + \tfrac{1}{\tau_k}(\xb^{k+1} - \hat{\xb}^k, \yb^{k+1} - \hat{\yb}^k),
\end{array}
\end{equation}
and $(\tilde{\xb}^0,\tilde{\yb}^0) := (\xb^0,\yb^0)$, where $\tau_k := \frac{1}{k+1} \in (0, 1]$.
Moreover, $\Phi_{\rho}$ defined by \eqref{eq:Phi_func} satisfies 
\begin{align}\label{eq:key_est1}
{\!\!\!\!\!\!}\begin{array}{ll}
\Phi_{\rho_k}(\zb^{k+1}) & \leq  (1-\tau_k)\Phi_{\rho_{k-1}}(\zb^k)  + \tau_kF(\zb^{\star}) + \tfrac{\gamma_k\tau_k^2}{2}\norms{\tilde{\xb}^k - \xb^{\star}}^2 \vspace{1ex}\\
& -  \tfrac{\gamma_k\tau_k^2}{2}\norms{\tilde{\xb}^{k+1} {\!\!} -\! \xb^{\star}}^2 + \tfrac{\rho_k\tau_k^2\norms{\Bb}^2}{2}\norms{\tilde{\yb}^k \!-\! \yb^{\star}}^2 - \tfrac{\rho_k\tau_k^2\norms{\Bb}^2}{2}\norms{\tilde{\yb}^{k+1} {\!\!}- \yb^{\star}}^2{\!\!\!} \vspace{1ex}\\
& - \tfrac{(1-\tau_k)}{2}\left[\rho_{k-1} - \rho_k(1-\tau_k)\right]\norms{\sb^k}^2,
\end{array}{\!\!\!\!}
\end{align}
where $\sb^k := \Ab\xb^k + \Bb\yb^k - \cb - \proj_{\Kc}\big(\Ab\xb^k + \Bb\yb^k - \cb\big)$.
\end{lemma}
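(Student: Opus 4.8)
\textbf{Proof proposal for Lemma~\ref{le:key_estimate}.}

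The plan is to verify the ``interpretation'' identity \eqref{eq:update_xtilde} first, and then establish the descent-type inequality \eqref{eq:key_est1} by treating the $\xb$-update and the $\yb$-update separately and combining them.

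\emph{Step 1: The momentum identity.} First I would check by induction on $k$ that the extrapolation step $(\hat{\xb}^{k+1},\hat{\yb}^{k+1}) = (\xb^{k+1},\yb^{k+1}) + \tfrac{k}{k+2}(\xb^{k+1}-\xb^k,\yb^{k+1}-\yb^k)$ is equivalent to the convex-combination form with $\tau_k = \tfrac{1}{k+1}$. Since $\tfrac{k}{k+2} = \tfrac{1-\tau_k}{1-\tau_{k+1}}\cdot\tau_{k+1}$ after substituting $\tau_k = 1/(k+1)$, one defines $(\tilde{\xb}^k,\tilde{\yb}^k)$ by the stated recursion and verifies that $(\hat{\xb}^k,\hat{\yb}^k) = (1-\tau_k)(\xb^k,\yb^k) + \tau_k(\tilde{\xb}^k,\tilde{\yb}^k)$ holds at each step; this is the classical Nesterov/Tseng three-point reformulation and is a routine algebraic induction.

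\emph{Step 2: Estimate for the $\xb$-subproblem.} Since $\xb^{k+1}\in\Sc_{\gamma_k}(\hat{\xb}^k,\hat{\yb}^k;\rho_k)$ minimizes $\xb\mapsto f(\xb) + \rho_k\psi(\xb,\hat{\yb}^k) + \tfrac{\gamma_k}{2}\norms{\xb-\hat{\xb}^k}^2$, its optimality condition gives a subgradient inclusion for $f$ at $\xb^{k+1}$. Using convexity of $f$ and $\rho_k$-strong convexity of the regularized objective in $\xb$ (from the $\tfrac{\gamma_k}{2}\norms{\cdot}^2$ term), I would get, for any test point (namely $\xb = (1-\tau_k)\xb^k + \tau_k\xb^{\star}$), a three-point inequality of the form
\[
f(\xb^{k+1}) + \rho_k\psi(\xb^{k+1},\hat{\yb}^k) + \tfrac{\gamma_k}{2}\norms{\xb^{k+1}-\hat{\xb}^k}^2 \le f(\xb) + \rho_k\psi(\xb,\hat{\yb}^k) + \tfrac{\gamma_k}{2}\norms{\xb - \hat{\xb}^k}^2 - \tfrac{\gamma_k}{2}\norms{\xb-\xb^{k+1}}^2.
\]
Then I would apply convexity of $f(\cdot)$ and of $\psi(\cdot,\hat{\yb}^k)$ to split $f(\xb)+\rho_k\psi(\xb,\hat{\yb}^k)$ along the convex combination, and use the momentum identity of Step~1 to rewrite $\hat{\xb}^k - \xb = \tau_k(\tilde{\xb}^k - \xb^{\star})$ and $\xb^{k+1} - \xb = \tau_k(\tilde{\xb}^{k+1} - \xb^{\star})$, converting the $\norms{\cdot-\hat{\xb}^k}^2$ and $\norms{\cdot - \xb^{k+1}}^2$ terms into $\tfrac{\gamma_k\tau_k^2}{2}\big(\norms{\tilde{\xb}^k-\xb^{\star}}^2 - \norms{\tilde{\xb}^{k+1}-\xb^{\star}}^2\big)$.

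\emph{Step 3: Estimate for the $\yb$-subproblem.} The $\yb$-update is a linearized proximal step: $\yb^{k+1} = \prox_{g/(\rho_k\norms{\Bb}^2)}(\hat{\yb}^k - \tfrac{1}{\norms{\Bb}^2}\nabla_y\psi(\xb^{k+1},\hat{\yb}^k))$, i.e.\ it minimizes $g(\yb) + \rho_k\iprods{\nabla_y\psi(\xb^{k+1},\hat{\yb}^k),\yb-\hat{\yb}^k} + \tfrac{\rho_k\norms{\Bb}^2}{2}\norms{\yb-\hat{\yb}^k}^2$. Here the key analytic fact is that $\yb\mapsto\psi(\xb^{k+1},\yb)$ has $\norms{\Bb}^2$-Lipschitz gradient (because $\psi$ is a Moreau-type squared distance composed with the affine map $\yb\mapsto\Bb\yb$, hence $1$-smooth in $\Bb\yb$), which yields the descent inequality $\rho_k\psi(\xb^{k+1},\yb^{k+1}) \le \rho_k\psi(\xb^{k+1},\hat{\yb}^k) + \rho_k\iprods{\nabla_y\psi(\xb^{k+1},\hat{\yb}^k),\yb^{k+1}-\hat{\yb}^k} + \tfrac{\rho_k\norms{\Bb}^2}{2}\norms{\yb^{k+1}-\hat{\yb}^k}^2$. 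Combining this with the optimality condition of the $\yb$-subproblem (which is $\rho_k\norms{\Bb}^2$-strongly convex) evaluated at $\yb = (1-\tau_k)\yb^k + \tau_k\yb^{\star}$, together with convexity of $g$ and of $\psi(\xb^{k+1},\cdot)$, produces a bound
\[
g(\yb^{k+1}) + \rho_k\psi(\xb^{k+1},\yb^{k+1}) \le (1-\tau_k)\big(g(\yb^k) + \rho_k\psi(\xb^{k+1},\yb^k)\big) + \tau_k\big(g(\yb^{\star}) + \rho_k\psi(\xb^{k+1},\yb^{\star})\big) + \tfrac{\rho_k\tau_k^2\norms{\Bb}^2}{2}\big(\norms{\tilde{\yb}^k-\yb^{\star}}^2 - \norms{\tilde{\yb}^{k+1}-\yb^{\star}}^2\big),
\]
again using the momentum identity of Step~1 for the $\tilde{\yb}$-reformulation.

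\emph{Step 4: Combination and cleanup.} Adding the $\xb$- and $\yb$-estimates, I would use joint convexity of $\psi$ to combine the leftover $\psi(\cdot,\hat{\yb}^k)$ and $\psi(\xb^{k+1},\cdot)$ terms: $\rho_k\psi(\xb^{k+1},\hat{\yb}^k)$ cancels, and $\psi(\xb,\hat{\yb}^k)$ together with $\psi(\xb^{k+1},\yb^k)$ and $\psi(\xb^{k+1},\yb^{\star})$ combine (via convexity in the joint variable and $\hat{\yb}^k = (1-\tau_k)\yb^k + \tau_k\tilde{\yb}^k$, plus the fact that the feasible point $(\xb^{\star},\yb^{\star})$ satisfies $\psi(\xb^{\star},\yb^{\star})=0$ since $\Ab\xb^{\star}+\Bb\yb^{\star}-\cb\in\Kc$) to bound everything above by $(1-\tau_k)\rho_k\psi(\zb^k) + \tau_k\cdot 0$. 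The terms $f(\xb^{k+1})+g(\yb^{k+1})$ on the left assemble into $F(\zb^{k+1})$; on the right, $(1-\tau_k)\big(f(\xb^k)+g(\yb^k)+\rho_k\psi(\zb^k)\big)$ plus $\tau_k F(\zb^{\star})$. Finally, to convert $(1-\tau_k)\big(F(\zb^k)+\rho_k\psi(\zb^k)\big)$ into $(1-\tau_k)\Phi_{\rho_{k-1}}(\zb^k)$, I write $\rho_k\psi(\zb^k) = \rho_{k-1}\psi(\zb^k) - (\rho_{k-1}-\rho_k)\psi(\zb^k)$ and use $\psi(\zb^k) = \tfrac12\norms{\sb^k}^2$; combined with the $-\tfrac{(1-\tau_k)}{2}\big[\rho_k(1-\tau_k)\big]\norms{\sb^k}^2$-type remainder from strong convexity of the $\psi$-combination (the cross term produced when bounding the combined distance terms on the $\yb$-side), this yields precisely the residual $-\tfrac{(1-\tau_k)}{2}[\rho_{k-1}-\rho_k(1-\tau_k)]\norms{\sb^k}^2$ in \eqref{eq:key_est1}, after noting $\rho_{k-1} - \rho_k(1-\tau_k) = \rho_{k-1} - \rho_k\cdot\tfrac{k}{k+1}$, which with $\rho_k = (k+1)\rho_0$ reads $k\rho_0 - k\rho_0 = \dots$, so the sign/magnitude bookkeeping needs care.

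\emph{Main obstacle.} The delicate point is Step~4: correctly tracking the quadratic remainder terms. There are three distance-squared contributions ($\norms{\xb^{k+1}-\xb}^2$ from the $\xb$-step, the Lipschitz slack $\norms{\yb^{k+1}-\hat{\yb}^k}^2$ from linearizing $\psi$ in $\yb$, and the strong-convexity slack from the $\yb$-subproblem), and they must be matched against the telescoping $\norms{\tilde{\xb}^k-\xb^{\star}}^2$, $\norms{\tilde{\yb}^k-\yb^{\star}}^2$ terms \emph{and} leave exactly the $\norms{\sb^k}^2$-coefficient shown. In particular one must verify that the coefficient multiplying $\norms{\sb^k}^2$ comes out as $-\tfrac{(1-\tau_k)}{2}[\rho_{k-1}-\rho_k(1-\tau_k)]$ and not something with the wrong sign — this is where the specific choice $\tau_k = 1/(k+1)$, $\rho_k = (k+1)\rho_0$ is used, and it is the heart of why the telescoping in the proof of Theorem~\ref{th:convergence1} will close. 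Everything else (Steps 1–3) is standard Nesterov/Tseng estimate-sequence manipulation.
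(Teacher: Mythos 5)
There is a genuine gap, and it sits exactly where you flagged it: Steps~1--3 are fine (the momentum identity and the two three-point inequalities are correct as stated), but the combination in Step~4 does not work. After splitting $f(\xb)+\rho_k\psi(\xb,\hat{\yb}^k)$ by convexity at $\xb=(1-\tau_k)\xb^k+\tau_k\xb^{\star}$ and $g(\yb)+\rho_k\psi(\xb^{k+1},\yb)$ at $\yb=(1-\tau_k)\yb^k+\tau_k\yb^{\star}$, you are left with the \emph{crossed} evaluations $\psi(\xb^k,\hat{\yb}^k)$, $\psi(\xb^{\star},\hat{\yb}^k)$, $\psi(\xb^{k+1},\yb^k)$, $\psi(\xb^{k+1},\yb^{\star})$, and your Step~4 needs an inequality of the form
\begin{equation*}
(1-\tau_k)\big[\psi(\xb^k,\hat{\yb}^k)+\psi(\xb^{k+1},\yb^k)\big]+\tau_k\big[\psi(\xb^{\star},\hat{\yb}^k)+\psi(\xb^{k+1},\yb^{\star})\big]\;\leq\;\psi(\xb^{k+1},\hat{\yb}^k)+(1-\tau_k)\psi(\xb^k,\yb^k).
\end{equation*}
Joint convexity gives upper bounds on $\psi$ at convex combinations of points, not upper bounds on combinations of crossed values by aligned values, and the displayed inequality is false in general: with $\Kc=\{0\}$, $\psi(\xb,\yb)=\tfrac12\norms{\Ab\xb+\Bb\yb-\cb}^2$, $\tau_k=\tfrac12$, $\Ab\xb^{\star}+\Bb\yb^{\star}=\cb$, $\Ab\xb^k+\Bb\yb^k=\cb$, $\Bb(\hat{\yb}^k-\yb^k)=t$, $\Ab(\xb^{k+1}-\xb^k)=s$, the left side minus the right side equals $-st$, which is positive whenever $s$ and $t$ have opposite signs. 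Moreover, even granting some recombination, your route lands on $(1-\tau_k)\Phi_{\rho_k}(\zb^k)$, and passing to $(1-\tau_k)\Phi_{\rho_{k-1}}(\zb^k)$ costs a \emph{positive} term $\tfrac{(1-\tau_k)(\rho_k-\rho_{k-1})}{2}\norms{\sb^k}^2$; the slacks you propose to pay it with (the Lipschitz slack and the strong-convexity slack of the $\yb$-subproblem) are already consumed in forming the telescoping terms $\norms{\tilde{\yb}^k-\yb^{\star}}^2-\norms{\tilde{\yb}^{k+1}-\yb^{\star}}^2$, so nothing is left over, and your own closing remark (``$k\rho_0-k\rho_0=\dots$, the bookkeeping needs care'') leaves precisely this unresolved.

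The paper avoids both problems by never splitting $\psi$ at crossed points: all occurrences of $\psi$ are collected into the single affine function $\ell_k(\zb)$, the linearization of $\psi$ at $(\xb^{k+1},\hat{\yb}^k)$ in \emph{both} variables, so that plugging in the convex combination $(1-\tau_k)\zb^k+\tau_k\zb^{\star}$ is exact. The missing ingredient in your proposal is then the pair of \emph{sharpened} bounds of Lemma~\ref{le:property_of_lk}: $\ell_k(\zb^{\star})\leq-\tfrac12\norms{\hat{\sb}^{k+1}}^2$ (from $\iprods{\hat{\ub}^k-\proj_{\Kc}(\hat{\ub}^k),\,\rb^{\star}-\proj_{\Kc}(\hat{\ub}^k)}\leq 0$, not merely $\ell_k(\zb^{\star})\leq\psi(\zb^{\star})=0$) and $\ell_k(\zb^k)\leq\psi(\zb^k)-\tfrac12\norms{\sb^k-\hat{\sb}^{k+1}}^2$ (the co-coercivity half of \eqref{eq:lipschitz_xy}). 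These two extra negative quadratics, combined through Young's inequality in the quantity $R_k$ of \eqref{eq:Rk_est}, are exactly what absorbs the penalty-increase term $\tfrac{(\rho_k-\rho_{k-1})}{2}\norms{\sb^k}^2$ and produces the residual $-\tfrac{(1-\tau_k)}{2}[\rho_{k-1}-\rho_k(1-\tau_k)]\norms{\sb^k}^2$ with the correct sign. Without an analogue of these projection-based inequalities, the estimate \eqref{eq:key_est1} cannot be closed, so the proposal as written does not prove the lemma.
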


\begin{proof}[The proof of Theorem~\ref{th:convergence1}]
The update rules $\tau_k := \frac{1}{k+1}$ and $\rho_k := \rho_0(k+1)$ from Algorithm~\ref{alg:A1} show that
\begin{equation*} 
\tau_0 := 1,~~~~\frac{(1-\tau_k)}{\rho_k\tau_k^2} = \frac{1}{\rho_{k-1}\tau_{k-1}^2}, ~~~\text{and}~~~\rho_{k-1} = (1-\tau_k)\rho_k. 
\end{equation*}
If we update $\gamma_k$ as $\gamma_{k+1} = \frac{\gamma_k(k+2)}{(k+1)} = \gamma_0(k+2)$, then $\frac{\gamma_{k+1}}{k+2} = \frac{\gamma_k}{k+1}$.

Let $S_{\rho}(\zb) := \Phi_{\rho}(\zb) - F^{\star}$. 
Using these equalities, we obtain from \eqref{eq:key_est1} that
\begin{equation*}
{\!\!\!\!}\begin{array}{ll}
(k+1)S_{\rho_k}(\zb^{k+1}) &+ \frac{\gamma_0}{2}\norms{\tilde{\xb}^{k+1} - \xb^{\star}}^2 +  \frac{\rho_0\norms{\Bb}^2}{2}\norms{\tilde{\yb}^{k+1} - \yb^{\star}}^2 \leq  kS_{\rho_{k-1}}(\zb^k) \vspace{1ex}\\
& +  \frac{\gamma_0}{2}\norms{\tilde{\xb}^k - \xb^{\star}}^2 + \frac{\rho_0\norms{\Bb}^2}{2}\norms{\tilde{\yb}^k - \yb^{\star}}^2.
\end{array}{\!\!\!\!}
\end{equation*} 
Let us denote by $a_k := \frac{\gamma_0}{2}\norms{\tilde{\xb}^k \!-\! \xb^{\star}}^2 \!+\! \frac{\rho_0\norms{\Bb}^2}{2}\norms{\tilde{\yb}^k \!-\! \yb^{\star}}^2$.
Then, the last estimate can be simplified as
\begin{equation*} 
\begin{array}{ll}
(k+1)S_{\rho_k}(\zb^{k+1}) + a_{k+1} \leq kS_{\rho_{k-1}}(\zb^k) + a_k.
\end{array}
\end{equation*} 
By induction and $a_k\geq 0$, we can show that $S_{\rho_k}(\zb^{k+1}) \leq \frac{a_0}{k+1}$, which leads to
\begin{equation}\label{eq:key_est10}
S_{\rho_k}(\zb^{k+1}) \leq \tfrac{1}{2(k+1)}\left[\gamma_0\norms{\tilde{\xb}^0 - \xb^{\star}}^2 + \rho_0\norms{\Bb}^2\norms{\tilde{\yb}^0 - \yb^{\star}}^2\right].
\end{equation}
Using this estimate into Lemma~\ref{le:approx_opt_cond} and note that $\rho_k = \rho_0(k+1)$, $\tilde{\xb}^0 = \hat{\xb}^0 = \xb^0$, and $\tilde{\yb}^0 = \hat{\yb}^0 = \yb^0$, we obtain \eqref{eq:convergence1}.
\Eproof
\end{proof}


\vspace{-1.5ex}
\begin{remark}[Non-acceleration]\label{re:non_acceleration}
Algorithm~\ref{alg:A1} adopts the Nesterov acceleration method to achieve $\BigO{\frac{1}{k}}$-rate. 
If we remove the acceleration step (i.e., set $\hat{\zb}^k = \zb^k$ and $\tilde{\zb}^k = \zb^k$ at all iterations), then one can show that the convergence rate of the non-acceleration variant of Algorithm~\ref{alg:A1} reduces to $\BigO{\frac{1}{\sqrt{k}}}$, i.e.,  $\vert F(\zb^k) - F^{\star}\vert \leq \BigO{\tfrac{1}{\sqrt{k}}}$ and  $\mathrm{dist}_{\Kc} \big(\Ab\xb^k + \Bb\yb^k - \cb\big) \leq \BigO{\tfrac{1}{\sqrt{k}}}$.
The proof of this result can be derived from Lemma \ref{le:key_descent_lemma}, and we omit its details in this paper. 
\end{remark}

\beforesubsec
\subsection{PAPA for the semi-strong convexity case}\label{subsec:papa_for_scvx}
\aftersubsec
In Algorithm~\ref{alg:A1}, we have not been able to prove a better convergence rate than $\BigO{\frac{1}{k}}$ when one objective term $f$ or $g$ is strongly convex.
Without loss of generality, we can assume that $g$ is $\mu_g$-strongly convex with  $\mu_g > 0$.

In this subsection, we propose a new algorithm that allows us to exploit the strong convexity of $g$ in order to improve the convergence rate from $\BigO{\frac{1}{k}}$ to $\BigO{\frac{1}{k^2}}$.
This algorithm can be viewed as a hybrid variant between Tseng's accelerated proximal gradient \cite{tseng2008accelerated} and Nesterov's  scheme in \cite{Nesterov1983}.

\beforesubsubsec
\subsubsection{The algorithm}
\aftersubsubsec
The details of the algorithm are presented in Algorithm~\ref{alg:A2}.

\begin{algorithm}[htp!]\caption{(\textit{\textbf{P}roximal \textbf{A}lternating \textbf{P}enalty \textbf{A}lgorithm} - Semi-strong convexity)}\label{alg:A2}
\begin{normalsize}
\begin{algorithmic}[1]
	\State {\hskip0ex}\textbf{Initialization:} 
	Choose an initial point $(x^0, y^0)\in\dom{F}$, and two initial values $\rho_0 \in \left(0, \frac{\mu_g}{2\norms{\Bb}^2}\right]$ and $\gamma_0 \geq 0$. 
	Set $\tau_0 := 1$,  ~$\hat{\xb}^0 := \xb^0$, and $\tilde{\yb}^0 := \yb^0$.
	\vspace{1ex}
	\State \textbf{For $k := 0$ to $k_{\max}$ perform}
		\vspace{1ex}
		\State{\hskip2ex}\label{step:A2_step6}Update $\tau_{k+1} :=  \frac{\tau_k}{2}\big( (\tau_k^2 + 4)^{1/2} - \tau_k \big)$.
	        \vspace{1ex}
		\State{\hskip2ex}\label{step:x_cvx_subprob}Update 
		$\left\{\begin{array}{ll}
		\hat{\yb}^k &:= (1-\tau_k)\yb^k + \tau_k\tilde{\yb}^k, \vspace{1ex}\\
		\xb^{k+1} &\in \Sc_{\gamma_0}(\hat{\xb}^k,\hat{\yb}^k;\rho_k), \vspace{1ex}\\
		\hat{\xb}^{k+1} &:= \xb^{k+1} + \tfrac{\tau_{k+1}(1-\tau_k)}{\tau_k}(\xb^{k+1} - \xb^k), \vspace{1ex}\\
		\tilde{\yb}^{k+1} &:= \kprox{g/(\tau_k\rho_k\norms{\Bb}^2)}{\tilde{\yb}^k - \tfrac{1}{\tau_k\norms{\Bb}^2}\nabla_y{\psi}(\xb^{k+1}, \hat{\yb}^k)}.
		\end{array}\right.$
		\vspace{1ex}
	        \State{\hskip2ex}\label{step:A2_step5} Perform \textbf{one} of the following \textbf{two steps}:
	        \begin{equation*}
	        \begin{array}{llll}
	          \textbf{Option 1:} & \yb^{k+1} &:=  (1-\tau_k)\yb^k + \tau_k\tilde{\yb}^{k+1} &~\text{(Averaging step)}. \vspace{1.5ex}\\
	          \textbf{Option 2:} & \yb^{k+1} &:= \prox_{g/(\rho_k\norms{\Bb}^2)}\Big( \hat{\yb}^k - \tfrac{1}{\norms{\Bb}^2}\nabla_y{\psi}(\xb^{k+1}, \hat{\yb}^k)\Big) &~\text{(Proximal step)}.
	          \end{array}
	         \end{equation*}
	        \State{\hskip2ex}\label{step:A2_step7}Update $\rho_{k+1} :=  \frac{\rho_k}{1 - \tau_{k+1}}$.
	        \vspace{1ex}
	\State\textbf{End~for}
\end{algorithmic}
\end{normalsize}
\end{algorithm}

\noindent Before analyzing the convergence of Algorithm~\ref{alg:A2}, we make the following remarks.

$\mathrm{(a)}$~Similar to Algorithm~\ref{alg:A1}, when the $x$-subproblem \eqref{eq:unreg_subprob_x} is solvable, we do not need to add the regularization term $\frac{\gamma_0}{2}\norms{\xb - \hat{\xb}^k}^2$.
In this case, there are only two parameters $\tau_k$ and $\rho_k$ involved in Algorithm~\ref{alg:A2}, and the term $\norms{\xb^0 - \xb^{\star}}^2$ also disappears in the convergence bound \eqref{eq:convergence2} of Theorem~\ref{th:convergence2} below.

$\mathrm{(b)}$~The update of $\tau_k$ at Step~\ref{step:A2_step6} is standard in accelerated methods. 
Indeed, if we define $t_k := \frac{1}{\tau_k}$, then we obtain the well-known Nesterov update rule \cite{Nesterov1983} for $t_k$ as $t_{k+1} = \frac{1}{2}\big(1 + (1 + 4t_k^2)^{1/2}\big)$ with $t_0 := 1$.
However, as shown in our proof below, we  can update $\tau_k$ and $\rho_k$ based on the following tighter conditions:
\begin{equation*}
\frac{\rho_k\tau_k^2\norms{\Bb}^2}{1-\tau_k} = \rho_{k-1}\tau_{k-1}^2\norms{\Bb}^2 + \mu_g \tau_{k-1} ~~~~\text{and}~~~~ \rho_k = \frac{\rho_{k-1}}{1 - \tau_k}.
\end{equation*}
These conditions lead to a new update rule for $\rho_k$ and $\tau_k$ as
\vspace{-0.75ex}
\begin{equation*}
\tau_k :=  \frac{\big(\tau_{k-1}^2 + \kappa\tau_{k-1}/\rho_{k-1}\big)^{1/2}}{1 + \big(\tau_{k-1}^2 + \kappa\tau_{k-1}/\rho_{k-1}\big)^{1/2}},~~~\text{and}~~~\rho_k :=  \frac{\rho_{k-1}}{1 - \tau_k},
\vspace{-0.75ex}
\end{equation*}
where $\kappa := \frac{\mu_g}{\norms{\Bb}^2}$.
This update requires $\mu_g$.  
In this case, we still have the same guarantee as in Theorem~\ref{th:convergence2}.
The update of $\rho_k$ is the same as in Algorithm~\ref{alg:A1}, i.e. $\rho_k :=  \frac{\rho_{k-1}}{1 - \tau_k}$, but $\gamma_0$ is fixed for all $k\geq 0$.  

$\mathrm{(c)}$~To achieve $\BigO{\frac{1}{k^2}}$-convergence rate, we only require the strong convexity on one objective term, i.e., $\mu_g > 0$.
In addition, we can compute $\sets{\yb^k}$ with averaging as in \textbf{Option 1} or with one additional proximal operator $\prox_g$ of $g$ as in \textbf{Option 2}.
For \textbf{Option 1}, the weighted averaging sequence is only on $\sets{\yb^k}$ but not on $\sets{\xb^k}$. 
This is different from a recent work in \cite{xu2017accelerated}, where the same convergence rate of ADMM is obtained for $\mu_g > 0$.
We emphasize that Algorithm~\ref{alg:A2} is fundamentally different from \cite{xu2017accelerated} as stated in the introduction.
The $\BigO{\frac{1}{k^2}}$ rate was also known for AMA  \cite{Goldstein2012}, but the guarantee is on the dual problem \eqref{eq:dual_prob}.
To achieve the same rate on \eqref{eq:constr_cvx}, an extra step is required, see \cite{tran2015construction}.

$\mathrm{(d)}$~The strong convexity of $g$ can be  relaxed to a quasi-strong convexity as studied in \cite{necoara2015linear}, where we assume that there exists $\mu_g > 0$ such that
\begin{equation*}
g(\yb) + \iprods{\nabla{g}(\yb), y^{\star} - \yb} + \tfrac{\mu_g}{2}\norms{\yb - \yb^{\star}}^2 \leq g(\yb^{\star}),~~~\forall \yb\in\dom{g}, ~\yb^{\star} \in \Yc^{\star},
\end{equation*}
where $\Yc^{\star}$ is the projection of the primal solution set $\Zc^{\star}$ onto $y$, and $\nabla{g}(\yb) \in\partial{g}(\yb)$.
As shown in \cite{necoara2015linear}, this condition is weaker than the strong convexity of $g$.

\beforesubsubsec
\subsubsection{Convergence analysis} 
\aftersubsubsec
We prove the following convergence result for Algorithm~\ref{alg:A2}.

\begin{theorem}\label{th:convergence2}
Let $\sets{(\xb^k, \yb^k)}$ be generated by Algorithm~\ref{alg:A2} for solving \eqref{eq:constr_cvx}. 
Then
\begin{equation}\label{eq:convergence2}
\begin{cases}
\vert F(\zb^k) - F^{\star} \vert &\leq \dfrac{2\max\set{ \rho_0 R_p^2, ~2\norms{\lbd^{\star}}R_d}}{\rho_0(k+1)^2}, \vspace{1ex}\\
\kdist{\Kc}{\Ab\xb^k + \Bb\yb^k - \cb} &\leq \dfrac{4R_d}{\rho_0(k+1)^2},
\end{cases}
\end{equation}
where $R_p^2 := \gamma_0\norms{\xb^0 \!\!-\! \xb^{\star}}^2 + \rho_0\norms{\Bb}^2\norms{\yb^0 \!\!-\! \yb^{\star}}^2$ and $R_d := \norms{\lbd^{\star}} + \sqrt{\norms{\lbd^{\star}}^2 + \rho_0R_p^2}$.
Consequently, the convergence rate of Algorithm~\ref{alg:A2} is $\BigO{\tfrac{1}{k^2}}$, i.e., $\vert F(\zb^k) - F^{\star}\vert \leq \BigO{\tfrac{1}{k^2}}$ and  $\mathrm{dist}_{\Kc} \big(\Ab\xb^k + \Bb\yb^k - \cb\big) \leq \BigO{\tfrac{1}{k^2}}$ either in semi-ergodic sense $($\textbf{Option 1}$)$ $($i.e., non-ergodic in $x$ and ergodic in $y$$)$ or in non-ergodic sense $($\textbf{Option 2}$)$.
\end{theorem}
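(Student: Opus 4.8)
The plan is to mirror the three-stage argument behind Theorem~\ref{th:convergence1}, the one new ingredient being the $\mu_g$-strong convexity of $g$. The first stage is a one-step ``key estimate'' for the penalty function $\Phi_{\rho}$ of \eqref{eq:Phi_func} along the iterates of Algorithm~\ref{alg:A2}, i.e.\ the analogue of Lemma~\ref{le:key_estimate}. With $S_{\rho}(\zb) := \Phi_{\rho}(\zb) - F^{\star}$ and $\sb^k := \Ab\xb^k + \Bb\yb^k - \cb - \proj_{\Kc}\big(\Ab\xb^k + \Bb\yb^k - \cb\big)$, I expect it to read
\begin{align*}
S_{\rho_k}(\zb^{k+1}) &\leq (1-\tau_k)S_{\rho_{k-1}}(\zb^k) + \tfrac{\gamma_0\tau_k^2}{2}\norms{\tilde{\xb}^k - \xb^{\star}}^2 - \tfrac{\gamma_0\tau_k^2}{2}\norms{\tilde{\xb}^{k+1} - \xb^{\star}}^2 \\
&\quad + \tfrac{\tau_k^2\rho_k\norms{\Bb}^2}{2}\norms{\tilde{\yb}^k - \yb^{\star}}^2 - \tfrac{\tau_k(\mu_g + \tau_k\rho_k\norms{\Bb}^2)}{2}\norms{\tilde{\yb}^{k+1} - \yb^{\star}}^2 \\
&\quad - \tfrac{(1-\tau_k)}{2}\big[\rho_{k-1} - \rho_k(1-\tau_k)\big]\norms{\sb^k}^2,
\end{align*}
where $(\tilde{\xb}^k, \tilde{\yb}^k)$ are Tseng-type shadow sequences satisfying $\hat{\xb}^k = (1-\tau_k)\xb^k + \tau_k\tilde{\xb}^k$, $\hat{\yb}^k = (1-\tau_k)\yb^k + \tau_k\tilde{\yb}^k$ and $\xb^{k+1} = (1-\tau_k)\xb^k + \tau_k\tilde{\xb}^{k+1}$ (this is exactly what the momentum coefficient $\tfrac{\tau_{k+1}(1-\tau_k)}{\tau_k}$ encodes), and, under \textbf{Option 1}, also $\yb^{k+1} = (1-\tau_k)\yb^k + \tau_k\tilde{\yb}^{k+1}$. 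Relative to Lemma~\ref{le:key_estimate}, the surplus $-\tfrac{\tau_k\mu_g}{2}\norms{\tilde{\yb}^{k+1} - \yb^{\star}}^2$ is precisely what will upgrade the rate from $\BigO{\tfrac{1}{k}}$ to $\BigO{\tfrac{1}{k^2}}$.

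To establish this estimate I would combine: the $\gamma_0$-strongly convex optimality of the $\xb$-subproblem $\Sc_{\gamma_0}(\hat{\xb}^k, \hat{\yb}^k; \rho_k)$, tested at the point $(1-\tau_k)\xb^k + \tau_k\xb^{\star}$, together with convexity of $f$ and of $\psi(\cdot, \hat{\yb}^k)$; convexity and $\norms{\Bb}^2$-Lipschitz-gradient continuity of $\psi(\xb^{k+1}, \cdot)$ in $\yb$; and the $(\mu_g + \tau_k\rho_k\norms{\Bb}^2)$-strong convexity of the subproblem defining $\tilde{\yb}^{k+1}$, tested at $\yb^{\star}$, the extra $\mu_g$ coming from $g$. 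For \textbf{Option 1} the final averaging step is absorbed using Jensen's inequality for $g$ and for $\psi(\xb^{k+1}, \cdot)$ along the segment $[\yb^k, \tilde{\yb}^{k+1}]$; for \textbf{Option 2} the additional $\prox_g$ evaluation yields a direct one-step descent of $\Phi_{\rho_k}$ in $\yb$, after which $\Phi_{\rho_k}(\zb^{k+1})$ is bounded by its value at the corresponding averaged iterate and the remainder of the argument is identical. This is the step I expect to be the main obstacle: $\psi$ is smooth only in the $\yb$-block (with constant $\norms{\Bb}^2$) and couples the two blocks through $\Ab\xb + \Bb\yb$, whereas $f$ and $g$ are merely proper closed convex and the two subproblems are evaluated at the mismatched arguments $\hat{\yb}^k$ and $\xb^{k+1}$; the delicate point is recombining the resulting mixed evaluations of $\psi$ back into $\rho_{k-1}\psi(\xb^k, \yb^k)$ plus a non-negative multiple of $\norms{\sb^k}^2$, while simultaneously channeling the whole strong-convexity surplus of $g$ into the $\tilde{\yb}$-distance term instead of losing it in the linearization of $\psi$.

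For the last two stages, the update $\tau_{k+1} = \tfrac{\tau_k}{2}\big((\tau_k^2+4)^{1/2} - \tau_k\big)$ with $\tau_0 = 1$ gives $\tfrac{\tau_{k+1}^2}{1-\tau_{k+1}} = \tau_k^2$, and then $\rho_{k+1} = \tfrac{\rho_k}{1-\tau_{k+1}}$ yields $\rho_{k-1} = (1-\tau_k)\rho_k$ --- so the $\norms{\sb^k}^2$ term disappears --- and, multiplying the two relations, $\rho_k\tau_k^2 = \rho_0$ and $\rho_{k+1} - \rho_k = \rho_0/\tau_{k+1}$ for all $k$. Dividing the key estimate by $\tau_k^2$ and using $\tfrac{1-\tau_k}{\tau_k^2} = \tfrac{1}{\tau_{k-1}^2}$, the potential
\[
\mathcal{E}_k := \tfrac{1}{\tau_{k-1}^2}S_{\rho_{k-1}}(\zb^k) + \tfrac{\gamma_0}{2}\norms{\tilde{\xb}^k - \xb^{\star}}^2 + \tfrac{\rho_k\norms{\Bb}^2}{2}\norms{\tilde{\yb}^k - \yb^{\star}}^2
\]
is non-increasing for $k\geq 1$: its $\yb$-weights telescope as soon as $(\rho_{k+1} - \rho_k)\norms{\Bb}^2 \leq \mu_g/\tau_k$, i.e.\ $\rho_0\norms{\Bb}^2 \leq \mu_g\,\tau_{k+1}/\tau_k$, which holds because $\tau_{k+1}/\tau_k$ is increasing with $\tau_1/\tau_0 = \tfrac{\sqrt{5}-1}{2} > \tfrac12$ and the initialization imposes $\rho_0 \leq \tfrac{\mu_g}{2\norms{\Bb}^2}$. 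The $k=0$ step (where $\tau_0 = 1$ annihilates both the $S_{\rho_{-1}}(\zb^0)$ and the $\norms{\sb^0}^2$ terms), together with $\tilde{\xb}^0 = \xb^0$ and $\tilde{\yb}^0 = \yb^0$, gives $\mathcal{E}_1 \leq \tfrac12 R_p^2$ with $R_p^2 := \gamma_0\norms{\xb^0 - \xb^{\star}}^2 + \rho_0\norms{\Bb}^2\norms{\yb^0 - \yb^{\star}}^2$; hence $S_{\rho_{k-1}}(\zb^k) \leq \tfrac{\tau_{k-1}^2}{2}R_p^2$ for all $k\geq 1$. Since $\rho_{k-1}\tau_{k-1}^2 = \rho_0$, this gives $2\rho_{k-1}S_{\rho_{k-1}}(\zb^k) \leq \rho_0 R_p^2$, so Lemma~\ref{le:approx_opt_cond} yields $\kdist{\Kc}{\Ab\xb^k + \Bb\yb^k - \cb} \leq \tfrac{1}{\rho_{k-1}}\big[\norms{\lbd^{\star}} + \sqrt{\norms{\lbd^{\star}}^2 + \rho_0 R_p^2}\big] = \tfrac{R_d}{\rho_{k-1}}$ and $-\tfrac{\norms{\lbd^{\star}}R_d}{\rho_{k-1}} \leq F(\zb^k) - F^{\star} \leq \tfrac{\rho_0 R_p^2}{2\rho_{k-1}}$, whence $\absc{F(\zb^k) - F^{\star}} \leq \tfrac{1}{2\rho_{k-1}}\max\set{\rho_0 R_p^2, 2\norms{\lbd^{\star}}R_d}$. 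Finally the elementary bound $1/\tau_{k-1} \geq (k+1)/2$ (from $\tau_0 = 1$ and $1/\tau_{k+1} \geq 1/\tau_k + \tfrac12$) gives $\rho_{k-1} = \rho_0/\tau_{k-1}^2 \geq \tfrac{\rho_0(k+1)^2}{4}$, and substituting this into the two displays produces exactly \eqref{eq:convergence2}. Under \textbf{Option 2} the same chain goes through verbatim, the only difference being that $\yb^k$ is then a non-ergodic (last-iterate-type) quantity, which explains the non-ergodic versus semi-ergodic dichotomy in the statement; verifying the $(\tau_k,\rho_k)$ identities used above and that the prescribed range of $\rho_0$ suffices is routine.
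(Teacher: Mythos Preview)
Your proposal is correct and follows essentially the same route as the paper: the key estimate you write down is exactly Lemma~\ref{le:tseng_key_est}, and your telescoping via the non-increasing potential $\mathcal{E}_k$ is equivalent to the paper's multiplicative recursion $A_{k+1}\le(1-\tau_k)A_k$ with $\prod_{i=1}^k(1-\tau_i)=\tau_k^2$. The only cosmetic difference is that the paper packages the $y$-weight as $\rho_{k-1}\tau_{k-1}^2\norms{\Bb}^2+\mu_g\tau_{k-1}$ and checks $\tau_{k-1}/\tau_k\le 2$, whereas you use $\rho_k\norms{\Bb}^2$ and check $\tau_{k+1}/\tau_k\ge \tfrac12$; these are the same condition, and both reduce to the initialization $\rho_0\le \mu_g/(2\norms{\Bb}^2)$.
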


To prove  Theorem~\ref{th:convergence2} we need  the following lemma (\emph{cf.} Appendix~\ref{apdx:le:tseng_key_est}).

\begin{lemma}\label{le:tseng_key_est}
Let $\sets{(\xb^k, \hat{\xb}^k,  \yb^k, \hat{\yb}^k, \tilde{\yb}^k)}$ be the sequence generated by Algorithm~\ref{alg:A2}. 
Then, $\hat{\xb}^k$  can be interpreted  as 
\begin{equation}\label{eq:x_hat}
\hat{\xb}^k = (1-\tau_k)\xb^k + \tau_k\tilde{\xb}^k,~~\text{with}~~\tilde{\xb}^0 := \xb^0,~~\text{and}~~\tilde{\xb}^{k+1} := \tilde{\xb}^k + \tfrac{1}{\tau_k}(\xb^{k+1} - \hat{\xb}^k).
\end{equation}
Moreover, the following estimate holds:
\begin{equation}\label{eq:key_est3} 
{\!\!\!\!\!}\begin{array}{ll}
\Phi_{\rho_k}(\zb^{k+1}) &{\!\!}\leq (1-\tau_k)\Phi_{\rho_{k-1}}(\zb^k)  + \tau_kF(\zb^{\star})  + \frac{\gamma_0\tau_k^2}{2}\norms{\tilde{\xb}^k {\!} - \xb^{\star}}^2  -  \frac{\gamma_0\tau_k^2}{2}\norms{\tilde{\xb}^{k+1} {\!\!}- \xb^{\star}}^2 \vspace{1ex}\\
& + \frac{\rho_k\tau_k^2\norms{\Bb}^2}{2}\norms{\tilde{\yb}^k - \yb^{\star}}^2  -  \frac{\rho_k\tau_k^2\norms{\Bb}^2 + \mu_g\tau_k}{2}\norms{\tilde{\yb}^{k+1} - \yb^{\star}}^2  \vspace{1ex}\\
& - \frac{(1-\tau_k)}{2}\left[\rho_{k-1} - \rho_k(1-\tau_k)\right]\norms{\sb^k}^2,
\end{array}{\!\!\!\!\!}
\end{equation}
where $\sb^k := \Ab\xb^k + \Bb\yb^k - \cb - \proj_{\Kc}\big(\Ab\xb^k + \Bb\yb^k - \cb\big)$.
\end{lemma}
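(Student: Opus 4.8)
The plan is to first verify the representation \eqref{eq:x_hat} by induction, and then to establish \eqref{eq:key_est3} by gluing together three one-step inequalities along a single linearization point, the subtle part being the exact coefficient of $\norms{\sb^k}^2$.

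First I would prove \eqref{eq:x_hat} by induction on $k$. The base case is immediate since $\tau_0 = 1$ and $\hat{\xb}^0 = \tilde{\xb}^0 = \xb^0$. For the step, I substitute $\tilde{\xb}^{k+1} = \tilde{\xb}^k + \tfrac{1}{\tau_k}(\xb^{k+1} - \hat{\xb}^k)$ together with the hypothesis $\hat{\xb}^k = (1-\tau_k)\xb^k + \tau_k\tilde{\xb}^k$ into $(1-\tau_{k+1})\xb^{k+1} + \tau_{k+1}\tilde{\xb}^{k+1}$ and check, by a routine rearrangement, that it equals $\xb^{k+1} + \tfrac{\tau_{k+1}(1-\tau_k)}{\tau_k}(\xb^{k+1} - \xb^k)$, which is exactly the update of $\hat{\xb}^{k+1}$ in Step~\ref{step:x_cvx_subprob}. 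The consequence I will use repeatedly is $\xb^{k+1} = (1-\tau_k)\xb^k + \tau_k\tilde{\xb}^{k+1}$, so that $\xb^{k+1}$ and, under Option~1, $\yb^{k+1}$ are convex combinations of the previous iterate and the new auxiliary point with the \emph{same} weights $(1-\tau_k, \tau_k)$, matching $\hat{\yb}^k = (1-\tau_k)\yb^k + \tau_k\tilde{\yb}^k$.

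Next I would record three building blocks. (i) From the optimality of $\xb^{k+1} \in \Sc_{\gamma_0}(\hat{\xb}^k,\hat{\yb}^k;\rho_k)$ and convexity of $f$, there is $\xi\in\partial{f}(\xb^{k+1})$ with $\xi + \rho_k\nabla_x\psi(\xb^{k+1},\hat{\yb}^k) + \gamma_0(\xb^{k+1}-\hat{\xb}^k) = 0$; pairing the subgradient inequality of $f$ at $\xb^k$ and $\xb^\star$ with weights $(1-\tau_k,\tau_k)$ controls $f(\xb^{k+1})$. (ii) Since $\psi(\xb^{k+1},\cdot)$ has $\norms{\Bb}^2$-Lipschitz $y$-gradient $\Bb^{\top}(\Ab\xb^{k+1}+\Bb\cdot-\cb - \proj_{\Kc}(\Ab\xb^{k+1}+\Bb\cdot-\cb))$, the descent inequality gives $\rho_k\psi(\xb^{k+1},\yb^{k+1}) \leq Q(\yb^{k+1})$, where $Q$ is the quadratic model obtained by linearizing $\rho_k\psi$ in $y$ at $\hat{\yb}^k$. (iii) The prox-optimality defining $\tilde{\yb}^{k+1}$ together with the $\mu_g$-strong convexity of $g$ yields a three-point inequality of modulus $\tau_k\rho_k\norms{\Bb}^2 + \mu_g$; evaluated at $\yb = \yb^\star$ and scaled by $\tau_k$ it produces exactly the pair $\tfrac{\rho_k\tau_k^2\norms{\Bb}^2}{2}\norms{\tilde{\yb}^k-\yb^\star}^2 - \tfrac{\rho_k\tau_k^2\norms{\Bb}^2 + \mu_g\tau_k}{2}\norms{\tilde{\yb}^{k+1}-\yb^\star}^2$. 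The core of the argument is then to combine (i)--(iii) through one gradient inequality for $\psi$ at the intermediate point $\zb^{k+1/2} := (\xb^{k+1},\hat{\yb}^k)$: with $\bar{\zb} := (1-\tau_k)\zb^k + \tau_k\zb^\star$, convexity gives $\rho_k\psi(\zb^{k+1/2}) \leq \rho_k\psi(\bar{\zb}) + \rho_k\iprods{\nabla\psi(\zb^{k+1/2}),\, \zb^{k+1/2}-\bar{\zb}}$. I split the inner product into its $x$- and $y$-parts; the $x$-part feeds into (i) via $\xb^{k+1}-\bar{\xb} = \tau_k(\tilde{\xb}^{k+1}-\xb^\star)$ and $\hat{\xb}^k-\bar{\xb} = \tau_k(\tilde{\xb}^k-\xb^\star)$ with the three-point identity (recovering the $\gamma_0$ terms), while the $y$-part merges with the descent term of (ii) into $\rho_k\iprods{\nabla_y\psi,\, \yb^{k+1}-\bar{\yb}} = \rho_k\tau_k\iprods{\nabla_y\psi,\, \tilde{\yb}^{k+1}-\yb^\star}$, whose residual linear term cancels the one from (iii). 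Convexity of $f,g$ and the shared weights then collapse the function values to $(1-\tau_k)[f(\xb^k)+g(\yb^k)] + \tau_k F(\zb^\star)$.

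The one delicate point, and the main obstacle, is the bound on $\psi(\bar{\zb})$, since it fixes the coefficient of $\norms{\sb^k}^2$. The plain Jensen estimate $\psi(\bar{\zb}) \leq (1-\tau_k)\psi(\zb^k)$ (using $\psi(\zb^\star)=0$ by feasibility) is too weak and leaves an uncancelled positive multiple of $\norms{\sb^k}^2$. Instead I will exploit the affine–squared-distance structure: writing $\ub^k := \Ab\xb^k+\Bb\yb^k-\cb$ and $\ub^\star := \Ab\xb^\star+\Bb\yb^\star-\cb\in\Kc$, the point $(1-\tau_k)\proj_{\Kc}(\ub^k) + \tau_k\ub^\star$ lies in $\Kc$, whence $\kdist{\Kc}{(1-\tau_k)\ub^k+\tau_k\ub^\star} \leq (1-\tau_k)\norms{\sb^k}$ and therefore $\psi(\bar{\zb}) \leq (1-\tau_k)^2\psi(\zb^k)$. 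Feeding this sharper bound into $\rho_k\psi(\zb^{k+1/2})$ turns the leftover penalty $(1-\tau_k)^2\rho_k\psi(\zb^k)$ into $(1-\tau_k)\rho_{k-1}\psi(\zb^k)$ plus exactly $-\tfrac{(1-\tau_k)}{2}[\rho_{k-1}-\rho_k(1-\tau_k)]\norms{\sb^k}^2$, reconstructing $(1-\tau_k)\Phi_{\rho_{k-1}}(\zb^k)$ and the stated $\sb^k$ term. Finally, Option~2 follows immediately: its $\yb^{k+1}$ minimizes $Q$, so $\Phi_{\rho_k}(\zb^{k+1}) \leq f(\xb^{k+1}) + \min_{\yb} Q(\yb) \leq f(\xb^{k+1}) + Q(\yb^{k+1})$ evaluated at the Option~1 point, which is precisely the quantity already bounded by the right-hand side of \eqref{eq:key_est3}.
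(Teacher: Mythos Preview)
Your proposal is correct and follows the same overall skeleton as the paper: both combine (a) the subgradient inequality for $f$ at the convex combination of $x^k$ and $x^\star$ via the optimality of the $x$-subproblem, (b) the Lipschitz--descent bound for $\psi(x^{k+1},\cdot)$ in $y$, and (c) the prox/strong-convexity three-point inequality for $\tilde y^{k+1}$, and then collapse everything through the shared weights $(1-\tau_k,\tau_k)$ and the identification \eqref{eq:x_hat}. The one place where you genuinely diverge is the $\norms{\sb^k}^2$ coefficient. The paper bounds $\ell_k(z^k)$ and $\ell_k(z^\star)$ \emph{separately} using the cocoercivity-type estimate \eqref{eq:lipschitz_xy} and the projection inequality (Lemma~\ref{le:property_of_lk}), which introduces the intermediate $\hat\sb^{k+1}$ and then requires the quadratic lower bound on $R_k$ in \eqref{eq:Rk_est}. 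You instead bound the affine quantity $\ell_k(\bar z)\le \psi(\bar z)\le (1-\tau_k)^2\psi(z^k)$ in one shot via the affine--squared-distance argument, landing directly on $-\tfrac{(1-\tau_k)}{2}[\rho_{k-1}-\rho_k(1-\tau_k)]\norms{\sb^k}^2$. Both routes give the identical inequality; yours is a bit shorter here, while the paper's route is packaged as the reusable descent Lemma~\ref{le:key_descent_lemma}(b), which is what also drives the three-objective extension in Theorem~\ref{co:three_objs}. One notational slip in your last paragraph: for \textbf{Option~2}, $\yb^{k+1}$ minimizes $g(\cdot)+\rho_k\Qc_k(\cdot)$, not your $Q$ alone; with that fix your chain $\Phi_{\rho_k}(\zb^{k+1})\le f(\xb^{k+1})+\min_{\yb}\{g(\yb)+\rho_k\Qc_k(\yb)\}\le \breve\Phi_{k+1}$ is exactly the paper's \eqref{eq:lower_bound_on_phik_b}.
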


\begin{proof}[The proof of Theorem~\ref{th:convergence2}]
For simplicity of notation, we denote by $S_k := S_{\rho_{k-1}}(\zb^k) = \Phi_{\rho_{k-1}}(\zb^k) - F^{\star}$.
Since $\rho_k$ is updated by $\rho_{k-1} = \rho_k(1-\tau_k)$, we can simplify \eqref{eq:key_est3} as follows:
\begin{equation*}
\begin{array}{ll}
S_{k+1} &+ \frac{\gamma_0\tau_k^2}{2}\norms{\tilde{\xb}^{k+1} - \xb^{\star}}^2 + \frac{\left(\rho_k\tau_k^2\norms{\Bb}^2 + \mu_g\tau_k\right)}{2}\norms{\tilde{\yb}^{k+1} - \yb^{\star}}^2 \leq  (1-\tau_k)S_k \vspace{1ex}\\
&+ \frac{\gamma_0\tau_k^2}{2}\norms{\tilde{\xb}^k - \xb^{\star}}^2 + \frac{\rho_k\tau_k^2\norms{\Bb}^2}{2}\norms{\tilde{\yb}^k - \yb^{\star}}^2.
\end{array}
\end{equation*}
Let us assume that the parameters $\tau_k$ and $\rho_k$ are updated such that
\begin{equation}\label{eq:key_cond4}
\frac{\rho_k\tau_k^2\norms{\Bb}^2}{1-\tau_k} \leq \rho_{k-1}\tau_{k-1}^2\norms{\Bb}^2 + \mu_g \tau_{k-1} ~~~~\text{and}~~~~ \frac{\gamma_0\tau_k^2}{1-\tau_k} \leq \gamma_0\tau_{k-1}^2.
\end{equation}
If we define $A_k := S_k +  \frac{\gamma_0\tau_{k-1}^2}{2}\norms{\tilde{\xb}^k - \xb^{\star}}^2 + \frac{\left(\norms{\Bb}^2\rho_{k-1}\tau_{k-1}^2 + \mu_g\tau_{k-1}\right)}{2}\norms{\tilde{\yb}^k - \yb^{\star}}^2$, then the above inequality implies
\begin{equation*}
A_{k+1} \leq (1-\tau_k)A_k.
\end{equation*}
By induction, we obtain
\begin{equation}\label{eq:proof3_est1}
A_{k+1} \leq  \omega_k \left[(1-\tau_0)S_0 + \tfrac{\gamma_0\tau_0^2}{2}\norms{\tilde{\xb}^0 - \xb^{\star}}^2 + \tfrac{\norms{\Bb}^2\rho_0\tau_0^2}{2}\norms{\tilde{\yb}^0 - \yb^{\star}}^2\right],
\end{equation}
where $\omega_k := \prod_{i=1}^k(1-\tau_i)$. 

Now, we write the update of $\tau_k$ and $\rho_k$ as follows
\begin{equation*} 
\tau_0 = 1,~~~\tau_k := \frac{\tau_{k-1}}{2}\left(\sqrt{\tau_{k-1}^2 + 4} - \tau_{k-1}\right),~\text{and}~\rho_{k} := \frac{\rho_{k-1}}{1-\tau_k} = \frac{\rho_{k-1}\tau_{k-1}^2}{\tau_k^2}.
\end{equation*}
This update leads to $1 - \tau_k = \frac{\tau_k^2}{\tau_{k-1}^2}$.
By induction and  $\tau_0 = 1$, we can show that $\frac{1}{k+1} \leq \tau_k \leq \frac{2}{k+2}$.
Moreover, we also have $\omega_k =  \prod_{i=1}^k(1-\tau_i) =  \prod_{i=1}^k\frac{\tau_i^2}{\tau_{i-1}^2} = \frac{\tau_k^2}{\tau_0^2} = \tau_k^2$.
Since $\rho_k = \frac{\rho_{k-1}}{1-\tau_k}$, by induction, we obtain $\rho_k = \frac{\rho_0}{\tau_k^2}$.

Next, we find the condition on $\rho_0$ such that the first condition of \eqref{eq:key_cond4} holds.
Indeed, using $1-\tau_k = \frac{\tau_k^2}{\tau_{k-1}^2}$ and $\rho_k = \frac{\rho_0}{\tau_k^2}$, this condition is equivalent to
\begin{equation*}
\rho_0\norms{\Bb}^2\left(\frac{\tau_{k-1}}{\tau_k}\right) \leq \mu_g.
\end{equation*}
Clearly, since $1 \leq \frac{\tau_{k-1}}{\tau_k} \leq 2$,  if $2\rho_0\norms{\Bb}^2 \leq \mu_g$, then $\rho_0\norms{\Bb}^2\left(\frac{\tau_{k-1}}{\tau_k}\right) \leq \mu_g$ holds.
The condition $2\rho_0\norms{\Bb}^2 \leq \mu_g$ is equivalent to $\rho_0 \leq \frac{\mu_g}{2\norms{\Bb}^2}$.

The second condition of \eqref{eq:key_cond4} automatically holds due to the update rule of $\tau_k$.
In this case, since $\tau_0 = 1$, $\tilde{\xb}^0 = \xb^0$, and $\tilde{\yb}^0 = \yb^0$, \eqref{eq:proof3_est1} leads to
\begin{equation*} 
S_{\rho_k}(\zb^{k+1}) \leq \tfrac{\tau_k^2}{2}\left[\gamma_0\norms{\xb^0 - \xb^{\star}}^2 + \rho_0\norms{\Bb}^2\norms{\yb^0 - \yb^{\star}}^2\right].
\end{equation*}
Using this, $\rho_k = \frac{\rho_0}{\tau_k^2}$, and $\frac{1}{k+1} \leq \tau_k \leq \frac{2}{k+2}$ into Lemma~\ref{le:approx_opt_cond}, we obtain \eqref{eq:convergence2}.
\Eproof
\end{proof}

\beforesec
\section{Variants and extensions}\label{sec:variants_extensions}
\aftersec
Algorithms \ref{alg:A1} and \ref{alg:A2} can be customized to obtain different variants. 
Let us provide some examples on how to customize these algorithms to handle instances of \eqref{eq:constr_cvx}.

\beforesubsec
\subsection{Application to composite convex minimization}\label{subsec:composite_cvx1}
\aftersubsec
Let us consider the following composite convex problem
\begin{equation}\label{eq:composite_prob}
P^{\star} := \min_{\yb\in\R^p}\Big\{ P(\yb) := f(\yb) + g(\yb) \Big\},
\end{equation}
where $f : \R^p\to\Rext$ and $g :\R^p\to\Rext$ are proper, closed, and convex.
Let us introduce $\xb = \yb$ and write \eqref{eq:composite_prob} as \eqref{eq:constr_cvx} with $F(\zb) := f(\xb) + g(\yb)$ and $\xb - \yb = 0$.

Now we apply Algorithm~\ref{alg:A1} to solve the resulting problem, and obtain
\begin{equation*}
\xb^{k+1} :=  \prox_{f/\rho_k}(\hat{\yb}^k)~~\text{and}~~\yb^{k+1} :=  \kprox{g/\rho_k}{\xb^{k+1}}.
\end{equation*}
We can combine these two steps to obtain the following scheme to solve \eqref{eq:composite_prob}:
\begin{equation}\label{eq:DR_method2}
\left\{\begin{array}{ll}
\yb^{k+1} &:=  \kprox{g/\rho_k}{\prox_{f/\rho_k}(\hat{\yb}^k)} ~~\text{with}~\rho_k :=  \rho_0(k+1),\vspace{1ex}\\
\hat{\yb}^{k+1} &:=  \yb^{k+1} + \frac{k}{k+2}\big(\yb^{k+1} - \yb^k\big).  
\end{array}\right.
\end{equation}
Here, $\rho_0 > 0$ is an initial value.
This scheme was studied in \cite{TranDinh2015d}.

Similarly, when $g$ is $\mu_g$-strongly convex with $\mu_g > 0$, we can apply Algorithm~\ref{alg:A2} to solve \eqref{eq:composite_prob}.
Let us consider \textbf{Option 1}. Then, after eliminating $\sets{\xb^k}$, we obtain
\begin{equation}\label{eq:DR_method3}
\left\{\begin{array}{ll}
\hat{\yb}^k &:=  (1-\tau_k)\yb^k + \tau_k\tilde{\yb}^k, \vspace{1ex}\\
\tilde{\yb}^{k+1} &:=  \prox_{g/(\tau_k\rho_k)}\Big(\tfrac{1}{\tau_k}\prox_{f/\rho_k}(\hat{\yb}^k) - \tfrac{(1 - \tau_k)}{\tau_k}\yb^k \Big), \vspace{1ex}\\
\yb^{k+1} &:=  (1-\tau_k)\yb^k + \tau_k\tilde{\yb}^{k+1}.
\end{array}\right.
\end{equation}
Here, $\rho_k := \frac{\rho_0}{\tau_k^2}$ for $\rho_0 \in (0, \tfrac{\mu_g}{2}]$, and $\tau_0 := 1$ and $\tau_{k+1} := 0.5\tau_k\big( (\tau_k^2 + 4)^{1/2} - \tau_k\big)$.
The following corollary provides the convergence rate of these two variants, whose proof can be found in Appendix~\ref{apdx:co:com_cvx_convergence}.

\begin{corollary}\label{co:com_cvx_convergence}
Assume that $f$ is Lipschitz continuous with the Lipschitz constant $L_f \in [0,+\infty)$, i.e., $\vert f(\yb) - f(\hat{\yb})\vert \leq L_f\norms{\yb - \hat{\yb}}$ for all $\yb, \hat{\yb}\in\dom{f}$.
Let $\sets{\yb^k}$ be generated by \eqref{eq:DR_method2} to solve \eqref{eq:composite_prob}. 
Then, we have
\begin{equation}\label{eq:co1_conv_rate1} 
P(\yb^k) - P^{\star} \leq \frac{\rho_0^2\norms{\yb^0 - \yb^{\star}}^2 + 4L_f^2+ 2L_f\rho_0\norms{\yb^0-\yb^{\star}}}{2\rho_0k}.
\end{equation}
If $f$ is $L_f$-Lipschitz continuous on $\dom{f}$ and $g$ is $\mu_g$-strongly convex, then  $\sets{\yb^k}$ generated by \eqref{eq:DR_method3} to solve \eqref{eq:composite_prob} satisfies
\begin{equation}\label{eq:co1_conv_rate2} 
P(\yb^k) - P^{\star} \leq \frac{2\rho_0\norms{\yb^0 - \yb^{\star}}^2}{(k+1)^2} + \frac{8\left(L_f^2 +  L_f\rho_0\norms{\yb^0 - \yb^{\star}}\right)}{\rho_0(k + 1)^2}.
\end{equation}
\end{corollary}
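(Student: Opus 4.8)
The plan is to derive Corollary~\ref{co:com_cvx_convergence} by specializing Theorem~\ref{th:convergence1} and Theorem~\ref{th:convergence2} to the reformulation of \eqref{eq:composite_prob} as an instance of \eqref{eq:constr_cvx} with $\Ab = \Id$, $\Bb = -\Id$, $\cb = \0$, and $\Kc = \{\0\}$, so that the constraint $\Ab\xb + \Bb\yb - \cb \in \Kc$ becomes $\xb = \yb$ and $\norms{\Bb} = 1$. First I would note that at a saddle point we have $\xb^{\star} = \yb^{\star}$, so the primal quantities $R_p^2$ and $R_d$ from the two theorems collapse to expressions involving only $\norms{\yb^0 - \yb^{\star}}$ (taking $\gamma_0 = 0$ since the $x$-subproblem $\prox_{f/\rho_k}$ is solvable). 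The key step is then to control $\norms{\lbd^{\star}}$: since the optimality condition \eqref{eq:opt_cond} gives $\Ab^{\top}\lbd^{\star} = \lbd^{\star} \in \partial f(\xb^{\star})$, and $f$ is $L_f$-Lipschitz continuous on its domain, every subgradient of $f$ has norm at most $L_f$, hence $\norms{\lbd^{\star}} \leq L_f$. Plugging $\norms{\lbd^{\star}} \leq L_f$ into the definitions $R_d = \norms{\lbd^{\star}} + \sqrt{\norms{\lbd^{\star}}^2 + \rho_0 R_p^2}$ with $R_p^2 = \rho_0\norms{\yb^0 - \yb^{\star}}^2$ yields $R_d \leq L_f + \sqrt{L_f^2 + \rho_0^2\norms{\yb^0-\yb^{\star}}^2} \leq 2L_f + \rho_0\norms{\yb^0 - \yb^{\star}}$ using $\sqrt{a+b}\leq\sqrt a+\sqrt b$.

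Next I would translate the feasibility-plus-objective bounds of the theorems into a bound on $P(\yb^k) - P^{\star}$ alone. Because the iterates of the reformulation need not satisfy $\xb^k = \yb^k$ exactly, I would apply the two theorems to bound both $\abs{F(\zb^k) - F^{\star}}$ and $\kdist{\Kc}{\Ab\xb^k + \Bb\yb^k - \cb} = \norms{\xb^k - \yb^k}$; then, using the first inequality of Lemma~\ref{le:approx_opt_cond} (the lower bound $F(\zb) - F^{\star} \geq -\norms{\lbd^{\star}}\kdist{\Kc}{\cdot}$) together with Lipschitz continuity of $f$ to pass from $F(\zb^k) = f(\xb^k) + g(\yb^k)$ to $P(\yb^k) = f(\yb^k) + g(\yb^k)$ via $\abs{f(\xb^k) - f(\yb^k)} \leq L_f\norms{\xb^k - \yb^k}$. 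Combining the $\BigO{1/k}$ bounds of Theorem~\ref{th:convergence1} for $\abs{F(\zb^k) - F^{\star}}$ and for $\norms{\xb^k - \yb^k}$, and collecting the constants, produces \eqref{eq:co1_conv_rate1}; the analogous computation with the $\BigO{1/k^2}$ bounds of Theorem~\ref{th:convergence2} (with $\rho_0 \in (0, \mu_g/2]$ since $\norms{\Bb}^2 = 1$) yields \eqref{eq:co1_conv_rate2}. Actually, since \textbf{Option 1} of Algorithm~\ref{alg:A2} keeps $\yb^k$ as a convex combination of prior $\tilde{\yb}$-iterates, one should double-check the eliminated recursion \eqref{eq:DR_method3} matches the general scheme after setting $\xb^{k+1} = \prox_{f/\rho_k}(\hat{\yb}^k)$ and substituting into the $\tilde{\yb}$-update with $\nabla_y\psi = -(\xb^{k+1} - \hat{\yb}^k)$.

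The main obstacle I anticipate is bookkeeping rather than conceptual: carefully tracking how the feasibility gap $\norms{\xb^k - \yb^k}$ enters the conversion from $F$ to $P$, and ensuring the final constants genuinely have the clean form stated (in particular the ``$\rho_0^2\norms{\yb^0 - \yb^{\star}}^2 + 4L_f^2 + 2L_f\rho_0\norms{\yb^0 - \yb^{\star}}$'' numerator, which should come out as $(\rho_0\norms{\yb^0-\yb^{\star}})^2 + (2L_f)^2 + 2L_f(\rho_0\norms{\yb^0-\yb^{\star}})$, i.e. essentially $(\rho_0\norms{\yb^0 - \yb^{\star}} + 2L_f)^2$ minus a cross term, or equivalently the square of $R_d$ after the crude bound). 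A secondary point of care is verifying that $\gamma_0 = 0$ is legitimate here, which follows from remark~(b) after Algorithm~\ref{alg:A1} (and the analogous remark for Algorithm~\ref{alg:A2}) since $\prox_{f/\rho_k}$ always exists and is single-valued for proper closed convex $f$, so the $x$-subproblem \eqref{eq:unreg_subprob_x} is solvable and the $\norms{\xb^0 - \xb^{\star}}^2$ terms drop out. Once these points are settled, the rest is substitution of the specialized constants into \eqref{eq:convergence1} and \eqref{eq:convergence2}.
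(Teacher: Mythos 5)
Your overall strategy (reformulate with $\xb=\yb$, set $\gamma_0=0$, and convert the constrained guarantees into a bound on $P(\yb^k)-P^{\star}$ via $\abs{f(\xb^k)-f(\yb^k)}\le L_f\norms{\xb^k-\yb^k}$) is the right one, but the specific route has two genuine problems. First, the step $\norms{\lbd^{\star}}\le L_f$ is not justified under the stated hypothesis that $f$ is Lipschitz only on $\dom{f}$: at a boundary point of $\dom{f}$ the subdifferential contains normal-cone directions of arbitrarily large norm (e.g.\ $f=\delta_{[0,1]}$ is $0$-Lipschitz on its domain, yet $\partial f(1)=[0,\infty)$), and $\lbd^{\star}\in\partial f(\xb^{\star})$ may be exactly such a subgradient; the inference is valid only when $\dom{f}=\R^{p}$. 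The paper's proof never bounds $\norms{\lbd^{\star}}$ at all: it uses the intermediate estimate $S_{\rho_{k-1}}(\zb^k)\le \rho_0\norms{\yb^0-\yb^{\star}}^2/(2k)$ from \eqref{eq:key_est10} together with the trivial lower bound $P(\yb^k)-P^{\star}\ge 0$ and the Lipschitz inequality, so that $L_f$ itself plays the role of $\norms{\lbd^{\star}}$ in the resulting quadratic inequality for $\norms{\xb^k-\yb^k}$ (see \eqref{eq:co1_est1}--\eqref{eq:co1_est2}); this sidesteps the dual bound entirely.

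Second, even if one grants $\norms{\lbd^{\star}}\le L_f$, plugging it into the \emph{statements} of Theorems~\ref{th:convergence1} and \ref{th:convergence2} and then adding $L_f\,\kdist{\Kc}{\cdot}$ double-counts the dual term: the objective bound in \eqref{eq:convergence1} already carries $\max\{\rho_0R_p^2,\,2\norms{\lbd^{\star}}R_d\}$, so your computation yields a numerator of the form $\rho_0^2\norms{\yb^0-\yb^{\star}}^2+8L_f^2+4L_f\rho_0\norms{\yb^0-\yb^{\star}}$ rather than the stated $\rho_0^2\norms{\yb^0-\yb^{\star}}^2+4L_f^2+2L_f\rho_0\norms{\yb^0-\yb^{\star}}$ (and similarly $2L_f^2$ instead of $L_f^2$ in \eqref{eq:co1_conv_rate2}), so the corollary as stated is not obtained. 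To get the exact constants you must work, as the paper does, with the one-sided bound $F(\zb^k)-F^{\star}\le S_{\rho_{k-1}}(\zb^k)-\tfrac{\rho_{k-1}}{2}\norms{\xb^k-\yb^k}^2$ from Lemma~\ref{le:approx_opt_cond} and the facts $\rho_{k-1}=\rho_0k$ (resp.\ $\tfrac{\rho_0(k+1)^2}{4}\le\rho_{k-1}\le\rho_0k^2$ and $S_{\rho_{k-1}}(\zb^k)\le 2\rho_0\norms{\yb^0-\yb^{\star}}^2/(k+1)^2$), which live in the proofs of the theorems, not in their statements. Your verification that \eqref{eq:DR_method2}--\eqref{eq:DR_method3} match the general schemes and that $\gamma_0=0$ is legitimate is fine; the gap is in how the final bounds are assembled.
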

Note that we can use \textbf{Option 2} to replace the averaging step on $\yb^k$ by $\prox_g$.
In this case, we still have the same guarantee as in \eqref{eq:co1_conv_rate2}, but the scheme \eqref{eq:DR_method3} is slightly changed.
We can also eliminate $\yb^k$ in Algorithm~\ref{alg:A2} instead of $\xb^k$. 
In this case, the convergence guarantee is on $\sets{\xb^k}$ and it requires $g$ to be $L_g$-Lipschitz continuous instead of $f$.
This convergence rate is non-ergodic. 
The proof is rather similar and we skip its details.

\beforesubsec
\subsection{Application to composite convex minimization with linear operator}\label{subsec:composite_cvx2}
\aftersubsec
We tackle a more general form of \eqref{eq:composite_prob} by considering the following problem:
\begin{equation}\label{eq:composite_prob2}
P^{\star} := \min_{\yb\in\R^p}\Big\{ P(\yb) := f(\Bb\yb) + g(\yb) \Big\},
\end{equation}
where $f : \R^p\to\Rext$ and $g :\R^n\to\Rext$ are proper, closed, and convex, and $\Bb\in\R^{n\times p}$ is a linear operator.

Using the same trick by introducing $\xb = \Bb\yb$, we obtain $F(\zb) = f(\xb) + g(\yb)$ and a linear constraint $\xb - \Bb\yb = 0$.
Now we apply Algorithm~\ref{alg:A1} to solve the resulting problem, and obtain
\begin{equation*}
\xb^{k+1} :=  \kprox{f/\rho_k}{\Bb\hat{\yb}^k}~~\text{and}~~\yb^{k+1} :=  \prox_{g/\rho_k\norms{\Bb}^2}\Big(\hat{\yb}^k - \tfrac{1}{\norms{\Bb}^2}\Bb^{\top}(\Bb\hat{\yb}^k - \xb^{k+1})\Big).
\end{equation*}
Plugging the first expression into the second one, and adding $\hat{y}$-step, we get
\begin{equation}\label{eq:algA1_v2}
\left\{\begin{array}{ll}
\yb^{k+1} &:=  \prox_{g/(\norms{\Bb}^2\rho_k)}\Big( \big(\Id-\frac{1}{\norms{\Bb}^2}\Bb^{\top}\Bb \big)\hat{\yb}^k + \tfrac{1}{\norms{\Bb}^2}\Bb^{\top}\kprox{f/\rho_k}{\Bb\hat{\yb}^k} \Big), \vspace{1ex}\\
\hat{\yb}^{k+1} &:= \yb^{k+1} + \frac{k}{k+2}(\yb^{k+1} - \yb^k).
\end{array}\right.
\end{equation}
Similarly, we can also customize Algorithm~\ref{alg:A2} to solve \eqref{eq:composite_prob2} when $g$ is $\mu_g$-strongly convex as
\begin{equation}\label{eq:algA2_v2}
\left\{\begin{array}{ll}
\tilde{\yb}^{k+1} &:= \prox_{g/(\tau_k\rho_k\norms{\Bb}^2)}\Big( \tilde{\yb}^k - \frac{1}{\tau_k\norms{\Bb}^2}\Bb^{\top}\left(\Bb\hat{\yb}^k - \kprox{f/\rho_k}{\Bb\hat{\yb}^k}\right) \Big), \vspace{1ex}\\
\yb^{k+1} &:= (1-\tau_k)\yb^k + \tau_k\tilde{\yb}^{k+1}, \vspace{1ex}\\
\hat{\yb}^{k+1} &:= \yb^{k+1} + \frac{\tau_{k+1}(1-\tau_k)}{\tau_k}(\yb^{k+1} - \yb^k).
\end{array}\right.
\end{equation}
The convergence of \eqref{eq:algA1_v2} and \eqref{eq:algA2_v2} can be proved as in Corollary \ref{co:com_cvx_convergence} under the Lipschitz continuity of $f$. We omit the details here.

\beforesubsec
\subsection{A primal-dual interpretation of Algorithm \ref{alg:A1} and Algorithm \ref{alg:A2}}\label{subsec:CP_method}
\aftersubsec
We show that Algorithms~\ref{alg:A1} and \ref{alg:A2} can be interpreted as  primal-dual methods for solving \eqref{eq:composite_prob2}.
We consider the $x$-subproblem \eqref{eq:x_cvx_subprob} with $\gamma = 0$ as
\begin{equation}\label{eq:x_subprob3}
\begin{array}{ll}
\xb^{k+1} &:= \kprox{f/\rho_k}{\Bb\hat{\yb}^k}  \overset{\tiny\eqref{eq:Moreau_identity}}{=}  \Bb\hat{\yb}^k - \frac{1}{\rho_k}\kprox{ \rho_k f^{\ast}}{ \rho_k\Bb\hat{\yb}^k}.
\end{array}
\end{equation}
Let $\bar{\xb}^{k+1} := \kprox{ \rho_k f^{\ast}}{\rho_k\Bb\hat{\yb}^k}$.
Then, by using \eqref{eq:x_subprob3} and a notation $\dot{\xb} := \boldsymbol{0}^{p_1}$, we can rewrite Algorithm~\ref{alg:A1} for solving \eqref{eq:composite_prob2} as
\begin{equation}\label{eq:PD_variant_for_A1} 
\left\{\begin{array}{ll}
\bar{\xb}^{k+1} &:= \kprox{\rho_k f^{\ast}}{\dot{\xb} + \rho_k\Bb\hat{\yb}^k}, \vspace{1ex}\\
\yb^{k+1} &:= \kprox{g/\rho_k\norms{\Bb}^2}{\hat{\yb}^k - \tfrac{1}{\rho_k\norms{\Bb}^2}\Bb^{\top}\bar{\xb}^{k+1}}, \vspace{1ex}\\
\hat{\yb}^{k+1} &:=  \yb^{k+1} + \frac{k}{k+2}(\yb^{k+1} - \yb^k).
\end{array}\right.
\end{equation}
This scheme can be considered as a new primal-dual method for solving \eqref{eq:composite_prob2}, and it is different from existing primal-dual methods in the literature.

Similarly, we can also interpret Algorithm~\ref{alg:A2} with \textbf{Option 1} as a primal-dual variant.
Using the same idea as above, we arrive at
\begin{equation}\label{eq:PD_variant_for_A2} 
\left\{\begin{array}{ll}
\bar{\xb}^{k+1} &:= \kprox{\rho_k f^{\ast}}{\dot{\xb} + \rho_k\Bb\hat{\yb}^k}, \vspace{1ex}\\
\tilde{\yb}^{k+1} &:= \kprox{g/(\tau_k\rho_k\norms{\Bb}^2)}{\tilde{\yb}^k - \tfrac{1}{\tau_k\rho_k\norms{\Bb}^2}\Bb^{\top}\bar{\xb}^{k+1}}, \vspace{1ex}\\
\yb^{k+1} &:=  (1-\tau_k)\yb^k + \tau_k\tilde{\yb}^{k+1},\vspace{1ex}\\
\hat{\yb}^{k+1} &:=  \yb^{k+1} + \frac{\tau_{k+1}(1-\tau_k)}{\tau_k}(\yb^{k+1} - \yb^k).
\end{array}\right.
\end{equation}
The convergence guarantee of both schemes \eqref{eq:PD_variant_for_A1}  and \eqref{eq:PD_variant_for_A2} can be proved as in Corollary~\ref{co:com_cvx_convergence} under the $L_f$-Lipschitz continuity assumption of $f$.
We again omit the detailed analysis here.

\beforesubsec
\subsection{Extension to the sum of three objective functions}\label{subsec:three_objs}
\aftersubsec
Let us consider the following constrained convex optimization problem:
\begin{equation}\label{eq:constr_cvx2}
F^{\star} := \min_{\zb := [\xb,\yb]}\Big\{ F(\zb) := f(\xb) + g(\yb) + h(\yb) \mid \Ab\xb + \Bb\yb - \cb \in \Kc \Big\},
\end{equation}
where $f$, $g$, $\Ab$, $\Bb$, $\cb$ and $\Kc$ are defined as in \eqref{eq:constr_cvx}, and $h : \R^{p_2}\to\R$ is convex and Lipschitz gradient continuous with the Lipschitz constant $L_h > 0$.
In this case, we can modify the $\yb$-subproblem in Algorithm~\ref{alg:A1} as
\begin{equation}\label{eq:y_prob2}
\yb^{k+1} := \prox_{g/\hat{\beta}_k}\left( \hat{\yb}^k - \tfrac{1}{\hat{\beta}_k}\big(\nabla{h}(\hat{\yb}^k) + \rho_k\nabla_y{\psi}(\xb^{k+1}, \hat{\yb}^k)\big) \right),
\end{equation}
where $\hat{\beta}_k := \norms{\Bb}^2\rho_k + L_h$.
Other steps remain as in Algorithm~\ref{alg:A1}.

When either $g$ is $\mu_g$-strongly convex or $h$ is $\mu_h$-strongly convex such that $\mu_g + \mu_h > 0$, we can applied Algorithm~\ref{alg:A2} to solve \eqref{eq:constr_cvx2}.
In this case, the $\yb$-subproblem in Algorithm~\ref{alg:A2} becomes
\begin{equation}\label{eq:y_prob2b}
\tilde{\yb}^{k+1} := \prox_{g/(\tau_k\hat{\beta}_k)}\left( \tilde{\yb}^k - \tfrac{1}{\tau_k\hat{\beta}_k}\big(\nabla{h}(\bar{\yb}^k) + \rho_k\nabla_y{\psi}(\xb^{k+1}, \hat{\yb}^k)\big) \right).
\end{equation}
Here, $\bar{\yb}^k$ is chosen such that $\bar{\yb}^k := \tilde{\yb}^k$ if $\mu_g + 2\mu_h - L_h > 0$, or $\bar{\yb}^k := \hat{\yb}^k$ if $\mu_g > 0$.
In addition, if we use \textbf{Option 2}, then we compute $\yb^{k+1}$ as
\begin{equation}\label{eq:choice2}
\yb^{k+1} :=  \prox_{g/\breve{\beta}_k}\left(\hat{\yb}^k - \tfrac{1}{\breve{\beta}_k}\big(\nabla{h}(\hat{\yb}^k) + \rho_k\nabla_y{\psi}(\xb^{k+1}, \hat{\yb}^k)\big)\right),
\end{equation}
where $\breve{\beta}_k := \rho_k\norms{B}^2 + L_h$.
Other steps remain the same as in Algorithm~\ref{alg:A2}.

The following theorem shows the convergence of these variants, whose proof can be found in Appendix~\ref{apdx:co:three_objs}.
\begin{theorem}\label{co:three_objs}
Let $\sets{(\xb^k,\yb^k)}$ be the sequence generated by Algorithm~\ref{alg:A1} to solve \eqref{eq:constr_cvx2} using \eqref{eq:y_prob2} for $\yb^k$ and $\hat{\beta}_k := \rho_k\norms{\Bb}^2 + L_h$.
Then the bound \eqref{eq:convergence1} in Theorem~\ref{th:convergence1} still holds with $R_p^2 := \gamma_0\norms{\xb^0 - \xb^{\star}}^2 + (L_h + \rho_0\norms{\Bb}^2)\norms{\yb^0 - \yb^{\star}}^2$.

Assume that either $g$ is $\mu_g$-strongly convex or $h$ is $\mu_h$-strongly convex such that $\mu_g + \mu_h > 0$.
Let $\sets{(\xb^k,\yb^k)}$ be the sequence generated by Algorithm~\ref{alg:A2} to solve \eqref{eq:constr_cvx2} using \eqref{eq:y_prob2b} for $\yb^k$ such that:
\begin{itemize}
\item[$\mathrm{(i)}$] If $\mu_g > 0$, then we choose $\bar{\yb}^k := \hat{\yb}^k$ and $0 < \rho_0 \leq \frac{\mu_g}{2\norms{\Bb}^2}$, and update $\hat{\beta}_k := \rho_k\norms{\Bb}^2 + L_h$.
\item[$\mathrm{(ii)}$] If $L_h < 2\mu_h$, then we choose $\bar{\yb}^k := \tilde{\yb}^k$ and $0 < \rho_0 \leq \frac{\mu_g + 2\mu_h - L_h}{2\norms{\Bb}^2}$, and update $\hat{\beta}_k := \rho_k\norms{\Bb}^2 + \tfrac{L_h}{\tau_k}$.
\end{itemize}
Then the bound \eqref{eq:convergence2} in Theorem~\ref{th:convergence2} still holds with $R_p^2 := \gamma_0\norms{\xb^0 - \xb^{\star}}^2 + (L_h + \rho_0\norms{\Bb}^2)\norms{\yb^0 - \yb^{\star}}^2$.
\end{theorem}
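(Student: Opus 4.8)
The plan is to mirror the proofs of Theorem~\ref{th:convergence1} and Theorem~\ref{th:convergence2}, the only genuinely new ingredient being the extra smooth term $h$. Throughout, I redefine the penalty function as $\Phi_{\rho}(\zb) := f(\xb) + g(\yb) + h(\yb) + \rho\psi(\xb,\yb)$ and $S_{\rho}(\zb) := \Phi_{\rho}(\zb) - F^{\star}$, with $F(\zb) = f(\xb)+g(\yb)+h(\yb)$ as in \eqref{eq:constr_cvx2}. First I would observe that Lemma~\ref{le:approx_opt_cond} holds verbatim for this augmented pair $(F,\Phi_{\rho})$: its proof uses only the saddle-point inequality \eqref{eq:saddle_point}, which now reads $F(\zb^{\star}) = \Lc(\zb^{\star},\rb^{\star},\lbd^{\star}) \le \Lc(\zb,\rb,\lbd^{\star}) = F(\zb) - \iprods{\lbd^{\star},\Ab\xb+\Bb\yb-\rb-\cb}$ with the augmented $F$, so nothing changes. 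Hence it suffices to re-derive the key descent estimates \eqref{eq:key_est1} and \eqref{eq:key_est3} for the augmented $\Phi_{\rho}$; after that the telescoping/induction arguments of the two theorems apply word for word, with the single replacement of the constant $\rho_0\norms{\Bb}^2$ multiplying $\norms{\yb^0-\yb^{\star}}^2$ inside $R_p^2$ by $L_h + \rho_0\norms{\Bb}^2$.

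For the Algorithm~\ref{alg:A1} part, the $\xb$-subproblem analysis is untouched since $h$ does not depend on $\xb$. For the $\yb$-update \eqref{eq:y_prob2}, set $\Psi_k(\cdot) := h(\cdot) + \rho_k\psi(\xb^{k+1},\cdot)$: this is convex and $\hat{\beta}_k$-Lipschitz-gradient continuous with $\hat{\beta}_k = L_h + \rho_k\norms{\Bb}^2$, and \eqref{eq:y_prob2} is precisely the prox-gradient step $\yb^{k+1} = \kprox{g/\hat{\beta}_k}{\hat{\yb}^k - \hat{\beta}_k^{-1}\nabla\Psi_k(\hat{\yb}^k)}$. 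Applying the standard prox-gradient inequality to $g+\Psi_k$ (three-point identity from the prox optimality condition, convexity of $g$, the descent lemma for $\Psi_k$ at $\yb^{k+1}$, and convexity of $\Psi_k$ evaluated against $\yb^{\star}$) reproduces \eqref{eq:key_est1} with $\rho_k\norms{\Bb}^2$ replaced by $\hat{\beta}_k$; the feasibility-slack term $\norms{\sb^k}^2$ and the $\xb$-terms are unchanged because the $\rho_k\psi$ part of $\Psi_k$ carries exactly the same contribution as before. The induction in the proof of Theorem~\ref{th:convergence1} then gives $S_{\rho_k}(\zb^{k+1}) \le \tfrac{1}{2(k+1)}\big[\gamma_0\norms{\xb^0-\xb^{\star}}^2 + (L_h + \rho_0\norms{\Bb}^2)\norms{\yb^0-\yb^{\star}}^2\big]$, and Lemma~\ref{le:approx_opt_cond} yields \eqref{eq:convergence1} with the claimed $R_p^2$.

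For the Algorithm~\ref{alg:A2} part the same strategy applies, but one must track which term supplies the strong convexity. In case~(i) ($\mu_g>0$, $\bar{\yb}^k = \hat{\yb}^k$) I again fold $h$ and $\rho_k\psi(\xb^{k+1},\cdot)$ into one $\hat{\beta}_k$-smooth convex block with $\hat{\beta}_k = \rho_k\norms{\Bb}^2 + L_h$, so the strong-convexity bonus in the analogue of \eqref{eq:key_est3} is still $\mu_g\tau_k$; condition \eqref{eq:key_cond4} with $\hat{\beta}_k$ in place of $\rho_k\norms{\Bb}^2$ simplifies (using $\rho_k = \rho_0/\tau_k^2$ and $1-\tau_k = \tau_k^2/\tau_{k-1}^2$) to $\rho_0\norms{\Bb}^2(\tau_{k-1}/\tau_k) \le \mu_g$, which holds since $\tau_{k-1}/\tau_k \le 2$ and $\rho_0 \le \mu_g/(2\norms{\Bb}^2)$. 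In case~(ii) ($L_h < 2\mu_h$, $\bar{\yb}^k = \tilde{\yb}^k$) I linearize $h$ at the momentum iterate $\tilde{\yb}^k$: pairing the $\mu_h$-strong-convexity lower bound of $h$ at $\yb^{\star}$ with the progress/descent estimate produces an effective strong-convexity coefficient $\mu_g + 2\mu_h - L_h > 0$ on $\norms{\tilde{\yb}^{k+1}-\yb^{\star}}^2$, while the $1/\tau_k$ magnification of the $\tilde{\yb}$-update forces the effective smoothness to be $\hat{\beta}_k = \rho_k\norms{\Bb}^2 + L_h/\tau_k$; condition \eqref{eq:key_cond4} then reduces to $\rho_0\norms{\Bb}^2(\tau_{k-1}/\tau_k) \le \mu_g + 2\mu_h - L_h$, satisfied under $\rho_0 \le (\mu_g+2\mu_h-L_h)/(2\norms{\Bb}^2)$. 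For Option~2 the averaging step is replaced by the single extra prox \eqref{eq:choice2} with $\breve{\beta}_k = \rho_k\norms{\Bb}^2 + L_h$, handled exactly as the last step in the proof of Theorem~\ref{th:convergence2}. In all cases the induction of Theorem~\ref{th:convergence2} closes, and Lemma~\ref{le:approx_opt_cond} delivers \eqref{eq:convergence2} with $R_p^2 = \gamma_0\norms{\xb^0-\xb^{\star}}^2 + (L_h+\rho_0\norms{\Bb}^2)\norms{\yb^0-\yb^{\star}}^2$.

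The main obstacle is case~(ii): carrying out the ``linearize $h$ at $\tilde{\yb}^k$'' bookkeeping so that (a) the descent lemma for $h$ combines with the $1/\tau_k$-scaled $\tilde{\yb}$-update to produce exactly the coefficient $L_h/\tau_k$ in $\hat{\beta}_k$, and (b) the two invocations of $\mu_h$-strong convexity (one against $\yb^{\star}$, one in the progress term) aggregate to $\mu_g + 2\mu_h - L_h$; then one must check that \eqref{eq:key_cond4} still holds with these modified constants under the stated bound on $\rho_0$. The Algorithm~\ref{alg:A1} part and case~(i) are routine once $h$ is merged into the smooth block.
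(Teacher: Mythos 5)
Your overall route is the same as the paper's: fold (or linearize) $h$ into the penalized smooth block, replace $\rho_k\norms{\Bb}^2$ by $\hat{\beta}_k$ in the key estimates of Lemmas~\ref{le:key_estimate} and \ref{le:tseng_key_est}, and re-check the telescoping conditions. The Algorithm~\ref{alg:A1} part and case~(i) are essentially right; in particular your observation that in case~(i) the $L_h$-terms cancel so that \eqref{eq:key_cond4} with $\hat{\beta}_k$ in place of $\rho_k\norms{\Bb}^2$ reduces to the unchanged condition $\rho_0\norms{\Bb}^2(\tau_{k-1}/\tau_k)\leq\mu_g$ is exactly what happens. One detail you gloss over for Algorithm~\ref{alg:A1}: after rescaling, the $y$-coefficient becomes $\rho_0\norms{\Bb}^2+L_h\tau_k$, which is no longer constant, so the induction of Theorem~\ref{th:convergence1} does not apply ``word for word''; one must check the monotonicity condition $\hat{\beta}_k\tau_k^2/(1-\tau_k)\leq\hat{\beta}_{k-1}\tau_{k-1}^2$ (the paper does, and it holds), after which the telescoping goes through because the discarded terms are nonnegative.

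The genuine gap is case~(ii), which you yourself defer as ``the main obstacle,'' and the bookkeeping you predict there is not what the computation yields. In the actual derivation, strong convexity of $h$ is invoked once on the combination $\breve{\yb}^{k+1}=(1-\tau_k)\yb^k+\tau_k\tilde{\yb}^{k+1}$ (that term is then discarded) and once against $\yb^{\star}$ after linearizing at $\tilde{\yb}^k$; the surviving bonus is a discount $-\tfrac{\mu_h\tau_k}{2}\norms{\tilde{\yb}^k-\yb^{\star}}^2$ on the \emph{incoming} term, while the outgoing coefficient remains $\tfrac{\hat{\beta}_k\tau_k^2+\mu_g\tau_k}{2}\norms{\tilde{\yb}^{k+1}-\yb^{\star}}^2$ --- no aggregated coefficient $\mu_g+2\mu_h-L_h$ ever multiplies $\norms{\tilde{\yb}^{k+1}-\yb^{\star}}^2$. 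Your point (a) is correct: the descent lemma for $h$ at $\tilde{\yb}^k$ contributes $\tfrac{\tau_kL_h}{2}\norms{\tilde{\yb}^{k+1}-\tilde{\yb}^k}^2$ (one power of $\tau_k$, versus $\tau_k^2$ from the penalty), so $\hat{\beta}_k=\rho_k\norms{\Bb}^2+L_h/\tau_k$ is precisely what makes the $\norms{\tilde{\yb}^{k+1}-\tilde{\yb}^k}^2$ coefficient in \eqref{eq:co2_key_est3} vanish. But the resulting telescoping requirement is $\hat{\beta}_k\tau_k^2-\mu_h\tau_k\leq(1-\tau_k)\big(\hat{\beta}_{k-1}\tau_{k-1}^2+\mu_g\tau_{k-1}\big)$, which reduces to $\rho_0\norms{\Bb}^2+L_h-\mu_h\leq\tfrac{\tau_k}{\tau_{k-1}}(L_h+\mu_g)$, not to your $\rho_0\norms{\Bb}^2(\tau_{k-1}/\tau_k)\leq\mu_g+2\mu_h-L_h$; the quantity $2\mu_h-L_h$ only emerges after bounding $\tau_k/\tau_{k-1}\geq 1/2$, which is how the hypothesis $\rho_0\leq\tfrac{\mu_g+2\mu_h-L_h}{2\norms{\Bb}^2}$ enters. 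Your stated condition is in fact stronger than the correct one (so it would still close the argument and is met under the stated $\rho_0$), but as written the case~(ii) estimate is not carried out, and the mechanism you describe for it would not materialize in the form you expect.
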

Note that we can extend  Algorithms~\ref{alg:A1} and \ref{alg:A2} to handle the case where $f(x)$ is replaced by $f(\xb) + h(\xb)$, where $h$ is convex and $L_h$-Lipschitz gradient continuous.

\beforesubsec
\subsection{Application to conic programming}\label{subsubsec:conic}
\aftersubsec
The setting \eqref{eq:constr_cvx} is sufficiently general to cope with many classes of convex problems.
As a specific example, we illustrate how to use Algorithm~\ref{alg:A1} to solve the following conic program:
\begin{equation}\label{eq:conic_prog}
\min_{\xb, \yb}\Big\{ \iprods{\qb, \yb} ~\mid~ \Bc(\yb) + \xb = \cb, ~\xb \in\Cc \Big\},
\end{equation}
where $\qb$ and $\cb$ are given vectors, $\Bc$ is a bounded linear operator, and $\Cc$ is a nonempty, closed, pointed, and convex cone.
This formulation is often  referred to as a dual problem in, e.g., linear, second-order cone, or semidefinite programming.
Clearly, \eqref{eq:conic_prog} can be cast into \eqref{eq:constr_cvx} with $f(\xb) := \delta_{\Cc}(\xb)$, the indicator function of the cone $\Cc$, $g(\yb) := \iprods{\qb, \yb}$, $\mathcal{A} = \Id$, the identity operator, and $\Kc = \set{\boldsymbol{0}}$.
Therefore, the core steps of Algorithm~\ref{alg:A1} for solving \eqref{eq:conic_prog} become
\begin{equation*}
\left\{\begin{array}{ll}
\xb^{k+1} &:= \kproj{\Cc}{\cb - \Bc(\hat{\yb}^k)}, \vspace{1ex}\\
\yb^{k+1} &:= \hat{\yb}^k - \frac{1}{\rho_k\norms{\Bc}^2}\left(\qb + \rho_k\Bc^{\ast}\left(\xb^{k+1} + \Bc(\hat{\yb}^k) - \cb\right)\right), \vspace{1ex}\\
(\hat{\xb}_{k+1}, \hat{\yb}_{k+1}) &:= (\xb^{k+1}, \yb^{k+1}) + \tfrac{k}{k+2}(\xb^{k+1} - \xb^k, \yb^{k+1} - \yb^k).
\end{array}\right.
\end{equation*}
Here, $\Bc^{\ast}$ is the adjoint operator of $\Bc$, and the parameter $\rho_k$ is updated as in Algorithm~\ref{alg:A1}.
This variant is rather simple, it requires one operation $\Bc(\cdot)$, one adjoint operation $\Bc^{\ast}(\cdot)$, and one projection onto the cone $\Cc$.

\beforesubsec
\subsection{Shifting the initial dual variable and restarting}\label{subsubsec:restarting}
\aftersubsec
As we can see from  \eqref{eq:approx_opt_cond} of Lemma~\ref{le:approx_opt_cond} that the bound on $\mathrm{dist}_{\Kc}(\Ab\xb + \Bb\yb - \cb)$ depends on $\norms{\lbd^{\star}}$ instead of $\norms{\lbd^{\star} - \lbd^0}$ from an initial dual variable $\lbd^0$.
We use the idea of ``restarting the prox-center point'' from \cite{Tran-Dinh2014a,TranDinh2014b} to adaptively update $\lambda^0$.
This idea has been recently used in  \cite{van2017smoothing,TranDinh2015b} as a restarting strategy and it has significantly improved the performance of the algorithms.

The main idea is to replace $\varphi$ defined by \eqref{eq:phi_minmax} by 
\begin{equation*}
\varphi_{\rho}(\ub;\lambda^0) := \max_{\lbd\in\R^n}\min_{\rb\in\Kc}\set{\iprods{\ub - \rb,\lbd} - \tfrac{\rho}{2}\norms{\lbd - \lbd^0}^2} = \tfrac{\rho}{2}\kdist{\Kc}{u + \tfrac{1}{\rho}\lbd}.
\end{equation*}
and redefine $\psi(\cdot,\cdot)$ in \eqref{eq:Phi_func} by $\psi_{\rho}(\xb, \yb;\lbd^0) := \varphi_{\rho}(\Ab\xb + \Bb\yb - \cb;\lbd^0 ) = \frac{\rho}{2}\kdist{\Kc}{\Ab\xb + \Bb\yb - \cb + \tfrac{1}{\rho}\lbd^0}^2$.
Then, the main steps of Algorithm~\ref{alg:A1} or Algorithm~\ref{alg:A2} become
\begin{equation}\label{eq:restart}
{\!\!\!\!\!}\begin{array}{lll}
&~~\xb^{k\!+\!1} {\!\!\!}&~ \in \argmin_{\xb}\set{ f(\xb) + \psi_{\rho_k}(\xb, \hat{\yb}^k; \lbd^0) }, \vspace{1ex}\\
&~~\yb^{k\!+\!1} {\!\!\!}&\ := \argmin_{\yb}\set{ g(\yb) + \iprods{\nabla_y{\psi_{\rho_k}}(\xb^{k\!+\!1}, \hat{\yb}^k;  \lbd^0), \yb \!-\! \hat{\yb}^k} + \tfrac{\rho_k\norms{\Bb}^2}{2}\norms{\yb - \hat{\yb}^k}^2 }, \vspace{1ex}\\
\text{or}&~\tilde{\yb}^{k\!+\!1} {\!\!}&{\!\!\!} := \argmin_{\yb}\set{ g(\yb) + \iprods{\nabla_y{\psi_{\rho_k}}(\xb^{k\!+\!1}, \hat{\yb}^k; \lbd^0), \yb - \hat{\yb}^k} + \tfrac{\rho_k\tau_k\norms{\Bb}^2}{2}\norms{\yb - \tilde{\yb}^k}^2 }.
\end{array}{\!\!\!\!\!}
\end{equation}
Our strategy is to frequently update $\lbd^0$ and restart the algorithms as follows.
We perform $k_s$ steps (e.g., $k_s = 100$) starting from $k :=  0$ to $k :=  k_s-1$, and restart the variables by resetting:
\vspace{-0.5ex}
\begin{equation*}
\rho_{k_s} :=  \rho_0, ~~~ \tau_{k_s} :=  1, ~~~ \hat{\yb}^{k_s} :=  \yb^{k_s}, ~\text{and}~~\lbd^0 :=  \lbd^0 + \nabla{\varphi_{\rho_{k_s}}}(\Ab\xb^{k_s+1} + \Bb\hat{\yb}^{k_s} - \cb;\lbd^0),
\vspace{-0.5ex}
\end{equation*}
where $\nabla{\varphi_{\rho}}$ is given by \eqref{eq:grad_varphi}.
Since proving the convergence of this variant is out of scope of this paper, we refer to our forthcoming work \cite{TranDinh2017f} for the full theory of restarting.

\beforesec
\section{Numerical experiments}\label{sec:num_examples}
\aftersec
In the following numerical examples, we focus on the following problem template:
\vspace{-0.5ex}
\begin{equation}\label{eq:unconstr_cvx}
F^{\star} := \min_{\yb\in\R^p}\Big\{ F(\yb) :=  f(\Bb\yb) + g(\yb) + h(\yb) \Big\},
\vspace{-0.5ex}
\end{equation}
where $f$ and $g$ are convex and possibly nonsmooth, $h$ is  convex and $L_h$-Lipschitz gradient continuous, and $\Bb$ is a linear operator.
If we introduce $\xb := \Bb\yb$ and let $h=0$, then the objective of \eqref{eq:unconstr_cvx} becomes $F(\zb) := f(\xb) + g(\yb)$ with an additional constraint $-\xb + \Bb\yb = 0$.
Hence, \eqref{eq:unconstr_cvx} can be converted into \eqref{eq:constr_cvx}.
Otherwise, it becomes \eqref{eq:constr_cvx2}.

\noindent We implement $9$ algorithms to solve \eqref{eq:unconstr_cvx} as follows:
\begin{itemize}
\item Algorithm \ref{alg:A1}, denoted by \texttt{PAPA}, and its restarting variant, called \texttt{PAPA-rs}.
\item Algorithm \ref{alg:A2}, denoted by \texttt{scvx-PAPA} and its restarting variant, called \texttt{scvx-PAPA-rs}.
\item Algorithm 1 in \cite{TranDinh2015b}, \texttt{ASGARD}, and its restarting variant, denoted by \texttt{ASGARD-rs}.
\item The Chambolle-Pock algorithm in \cite{Chambolle2011} and Vu-Condat's method in \cite{Condat2013,vu2013splitting}.
\item The accelerated proximal gradient method, denoted by \texttt{AcProxGrad}, in \cite{Beck2009,Nesterov2007}.
\end{itemize}
These algorithms are implemented in Matlab (R2014b), running on a MacBook Pro. Laptop with 2.7 GHz Intel Core i5, and 16GB memory.
Note that the per-iteration complexity of Algorithm \ref{alg:A1}, Algorithm \ref{alg:A2}, \texttt{ASGARD}, Chambolle-Pock's algorithm, and Vu-Condat's algorithm is essentially the same.
For a thorough comparison to between \texttt{ASGARD} and other methods, including ADMM, we refer to \cite{TranDinh2015b}.

For configuration of Algorithms~\ref{alg:A1} and \ref{alg:A2}, we choose $\rho_0 := \tfrac{1}{\norm{\Bb}}$ in Algorithm~\ref{alg:A1} and its variants.
We choose $\rho_0 := \frac{\mu_g}{2\norm{\Bb}^2}$ in Algorithm~\ref{alg:A2} and its variants. 
However, if $\mu_g$ is unknown (e.g., problem may not be strongly convex, but quasi-strongly convex), we examine and choose $\mu_g := 0.1$.
Since we consider the case $\Ab = \Id$, we set $\gamma_0 := 0$ in all variants of \texttt{PAPA}.
For restarting variants, we restart \texttt{PAPA} after each $50$ iterations and \texttt{scvx-PAPA} after each $100$ iterations as described in Subsection \ref{subsubsec:restarting}.
For \texttt{ASGARD}, we use the same setting as in  \cite{TranDinh2015b}, and for Chambolle-Pock's and Vu-Condat's algorithm, we choose the parameters as suggested in \cite{Chambolle2011,Condat2013,vu2013splitting} for both the strongly and nonstrongly convex cases.
We also restart \texttt{ASGARD} after every each $50$ iterations.
Our Matlab code is available online at \url{https://github.com/quoctd/PAPA-1.0}.

\beforesubsec
\subsection{Dense convex quadratic programs}
\aftersubsec
We consider the following convex quadratic programming problem:
\vspace{-0.5ex}
\begin{equation}\label{eq:qp_exam}
g^{\star} := \min_{\yb\in\R^{p_2}}\set{ g(\yb) := \tfrac{1}{2}\yb^{\top}\Qb\yb + \qb^{\top}\yb \mid \ab \leq \Bb\yb \leq \bb }, 
\vspace{-0.5ex}
\end{equation}
where  $\Qb\in\R^{p_2\times p_2}$ is a symmetric positive [semi]definite matrix, $\qb\in\R^{p_2}$,  $\Bb\in\R^{n\times p_2}$ and $\ab,\bb\in\R^n$ such that $\ab \leq \bb$. 
We assume that both $\Qb$ and $\Bb$ are dense.

This problem can be reformulated into \eqref{eq:constr_cvx} by introducing a new variable $\xb := \Bb\yb$ to form the linear constraint $\xb - \Bb\yb = 0$ and an additional objective term $f(\xb) := \delta_{[\ab,\bb]}(\xb)$.
In this case, we have $\Kc = \set{ \boldsymbol{0} }$.

The main step of both Algorithms~\ref{alg:A1} and \ref{alg:A2} is to solve two subproblems at Step~\ref{eq:alter_scheme}.
For \eqref{eq:qp_exam}, these two problems can be solved explicitly as
\begin{equation*}
\left\{\begin{array}{ll}
\xb^{k+1} &: =  \proj_{[\ab,\bb]}\big(\Bb\hat{\yb}^k\big), \vspace{1ex}\\
\yb^{k+1} &: =  (\rho_k\norms{\Bb}^2\Id + \Qb)^{-1}\Big(\rho_k\norms{\Bb}^2\hat{\yb}^k - \rho_k\Bb^{\top}(\Bb\hat{\yb}^k - \xb^{k+1}) - \qb\Big).
\end{array}\right.
\end{equation*}
For Algorithm~\ref{alg:A2}, we change from $\yb^{k+1}$ to $\tilde{\yb}^{k+1}$, from $\hat{\yb}^k$ to $\tilde{\yb}^k$, and from $\rho_k\norms{\Bb}^2$ to $\tau_k\rho_k\norms{\Bb}^2$ in the second line.

Note that we can write \eqref{eq:qp_exam} into the following form 
\begin{equation*}
g^{\star} := \min_{\yb}\set{ G(\yb) := \tfrac{1}{2}\yb^{\top}\Qb\yb + \qb^{\top}\yb + f(\Bb\yb)},
\end{equation*}
where $f(\xb) := \delta_{[\ab,\bb]}(\xb)$ is the indicator function of the box $[\ab, \bb]$.
Hence, we can apply the Chambolle-Pock primal-dual algorithm \cite{Chambolle2011} to solve \eqref{eq:qp_exam}.

We test the first $7$ algorithms mentioned above on some synthetic data generated as follows.
We randomly generate $R \in\R^{p_2\times m}$, $q\in\R^{p_2}$, and $B\in\R^{n\times p_2}$ using the standard Gaussian distribution, where $m = \lfloor p_2/2\rfloor + 1$.
To avoid large magnitudes, we normalize $R$ by $\frac{1}{\sqrt{m}}R$, and $B$ by $\frac{1}{\sqrt{n}}B$.
We then define $Q := RR^{\top} + \mu_g\Id$, where $\mu_g = 0$ for the nonstrongly convex case and $\mu_g = 1$ for the strongly convex case.
We generate a random vector $\yb^{\natural}$ using again the standard Gaussian distribution, and define $\ab := \Bb\yb^{\natural} - \texttt{rand}(n,1)$ and $\bb := \Bb\yb^{\natural} + \texttt{rand}(n,1)$ to make sure that the problem is feasible, where $\texttt{rand}(n,1)$ is a uniform random vector in $(0, 1)^n$.

Figure \ref{fig:scvxQp1000} shows the convergence of $7$ algorithms on a strongly convex instance of \eqref{eq:qp_exam}, where $p_2 = 2000$ and $n = 2000$.
The left-plot shows the convergence of the relative objective residual $\frac{\abs{g(\yb^k) - g^{\star}}}{\abs{g^{\star}}}$, where $g^{\star}$ is computed by CVX \cite{Grant2006} using Mosek with the best accuracy.
The right-plot reveals the relative feasibility violation $\frac{\norms{\max\set{\Bb\yb^k - \bb, 0}} + \norms{\min\set{\Bb\yb^k - \ab, 0}}}{\max\set{\norm{\ab}, \norm{\bb}}}$.

\begin{figure}[htp!]
\begin{center}
\vspace{-4ex}
\includegraphics[width=1\linewidth]{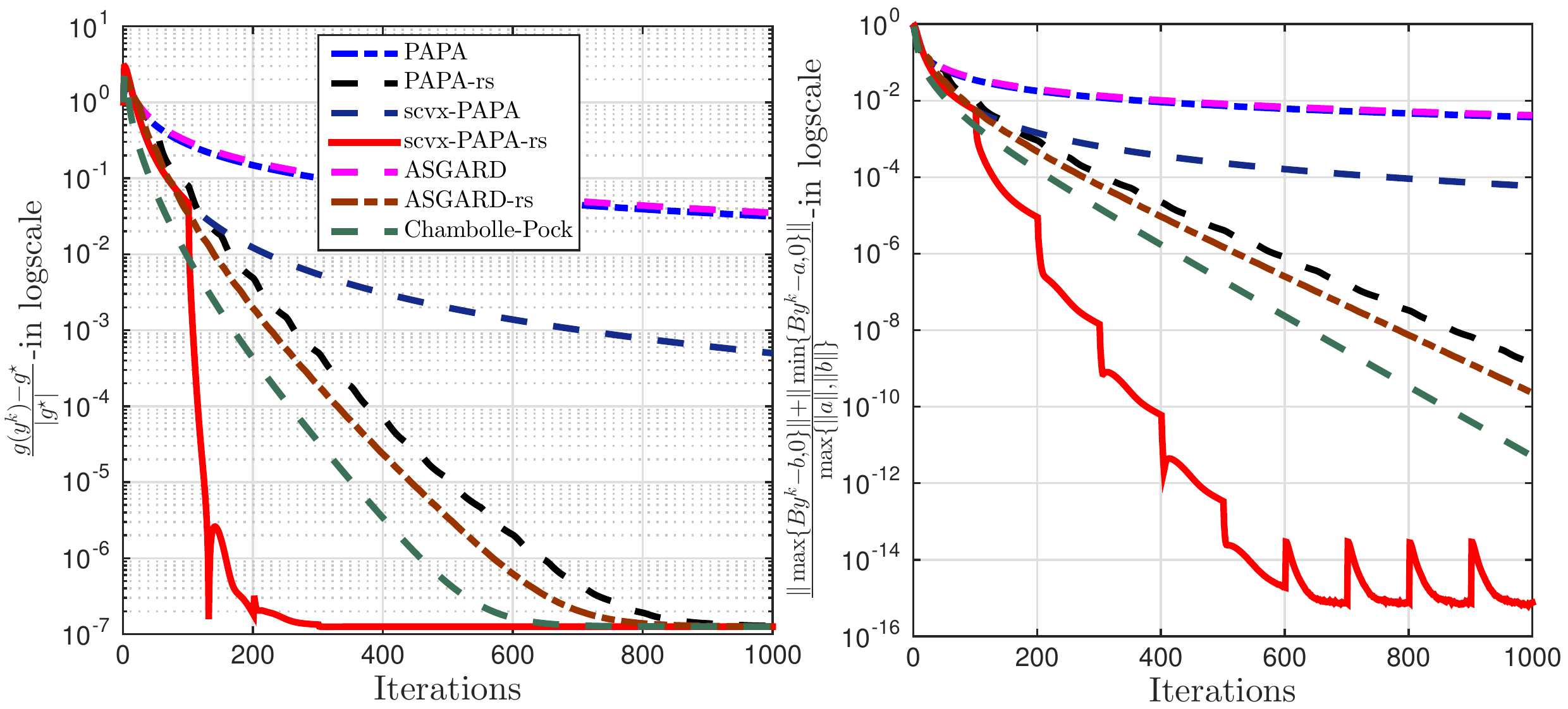}
\vspace{-3ex}
\caption{A comparison of $7$ algorithms on a \textbf{strongly convex} problem instance of \eqref{eq:qp_exam} after $1000$ iterations. 
The problem size is $(p_2 = 2000, n = 2000)$. 
Left: The relative objective residual, Right: The relative feasibility violation.
Due to Mosek's solution, the relative objective residual is saturated at a $10^{-7}$ accuracy, while the relative feasibility can reach a $10^{-15}$ accuracy. 
}\label{fig:scvxQp1000}
\vspace{-6ex}
\end{center}
\end{figure}

Since the problem is strongly convex, Algorithm~\ref{alg:A2} shows its $\BigO{\tfrac{1}{k^2}}$ convergence rate as predicted by the theory (Theorem~\ref{th:convergence2}), while Algorithm~\ref{alg:A1} and \texttt{ASGARD} still show their $\BigO{\tfrac{1}{k}}$ convergence rate.
The Chambolle-Pock algorithm using strong convexity works really well and exhibits beyond the theoretical $\BigO{\tfrac{1}{k^2}}$-rate.
The restarting variant of  Algorithm~\ref{alg:A2} completely outperforms the other methods, although the restarting variants of \texttt{PAPA} as well as \texttt{ASGARD} work well.

Next, we test these algorithms on a nonstrongly convex instance of \eqref{eq:qp_exam} by setting $\mu_g := 0$.
The convergence behavior of these algorithms is plotted in Figure~\ref{fig:nscvxQp1000}.

\begin{figure}[htp!]
\begin{center}
\vspace{-4ex}
\includegraphics[width=1\linewidth]{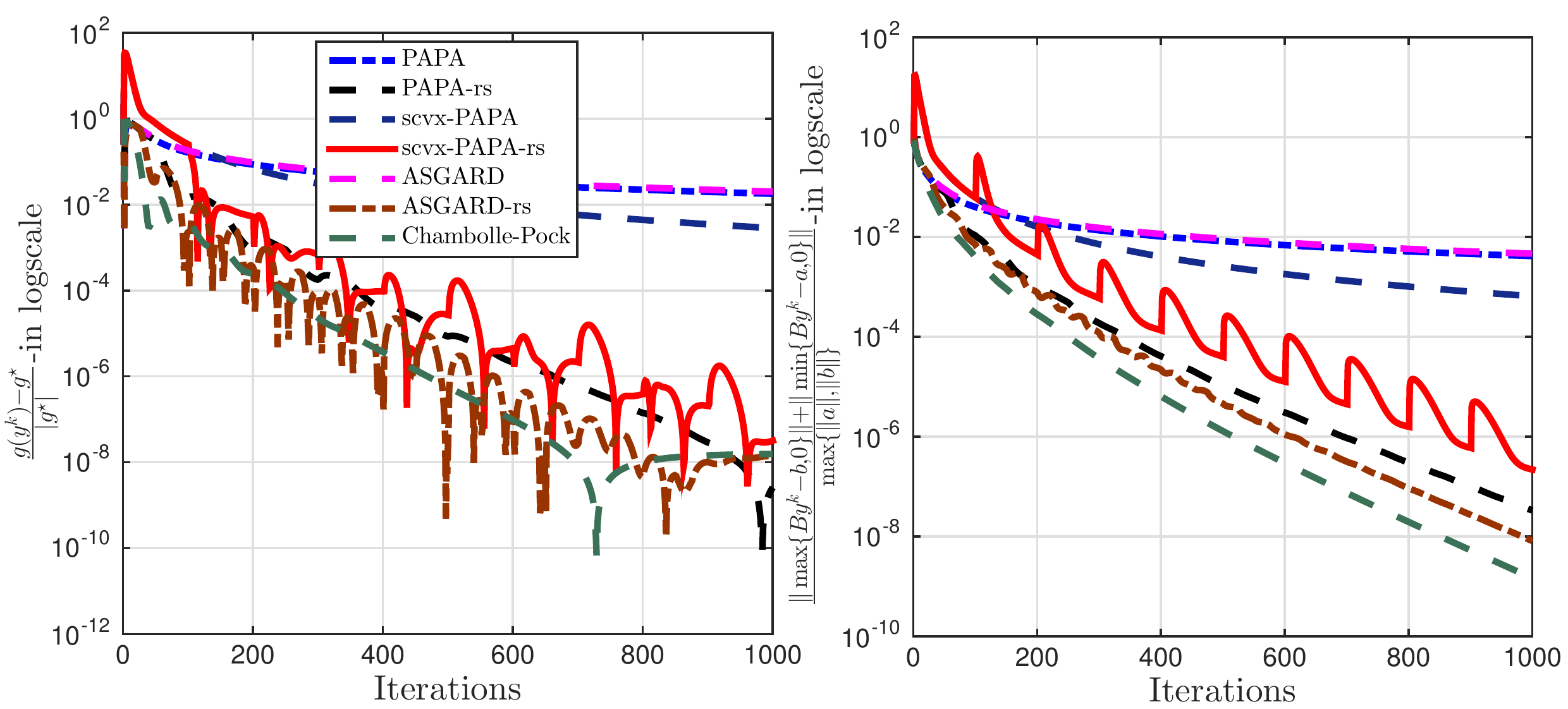}
\vspace{-3ex}
\caption{A comparison of $7$ algorithms on a \textbf{nonstrongly convex} problem instance of \eqref{eq:qp_exam} after $1000$ iterations. 
The problem size is $(p_2 = 2000, n = 2000)$. 
Left: The relative objective residual, Right: The relative feasibility violation.
}\label{fig:nscvxQp1000}
\vspace{-6ex}
\end{center}
\end{figure}

Since the problem is no longer strongly convex, Algorithm~\ref{alg:A2} does not guarantee its $\BigO{\tfrac{1}{k^2}}$-rate, but Algorithm~\ref{alg:A1} still has its $\BigO{\tfrac{1}{k}}$-rate.
The restarting variant of Algorithm~\ref{alg:A2} still improves its theoretical performance, but becomes worse than other restart variants and the Chambolle-Pock method.
In this particular instance, \texttt{ASGARD} with restarting still works well.

Finally, we verify the restarting variants on the strongly convex problem instance of \eqref{eq:qp_exam} by choosing different frequencies: $s = 50$ and $s= 100$.
The convergence result of this run is plotted in Figure~\ref{fig:scvxQp1000_restart}.
\begin{figure}[H]
\begin{center}
\vspace{-4ex}
\includegraphics[width=1.0\linewidth]{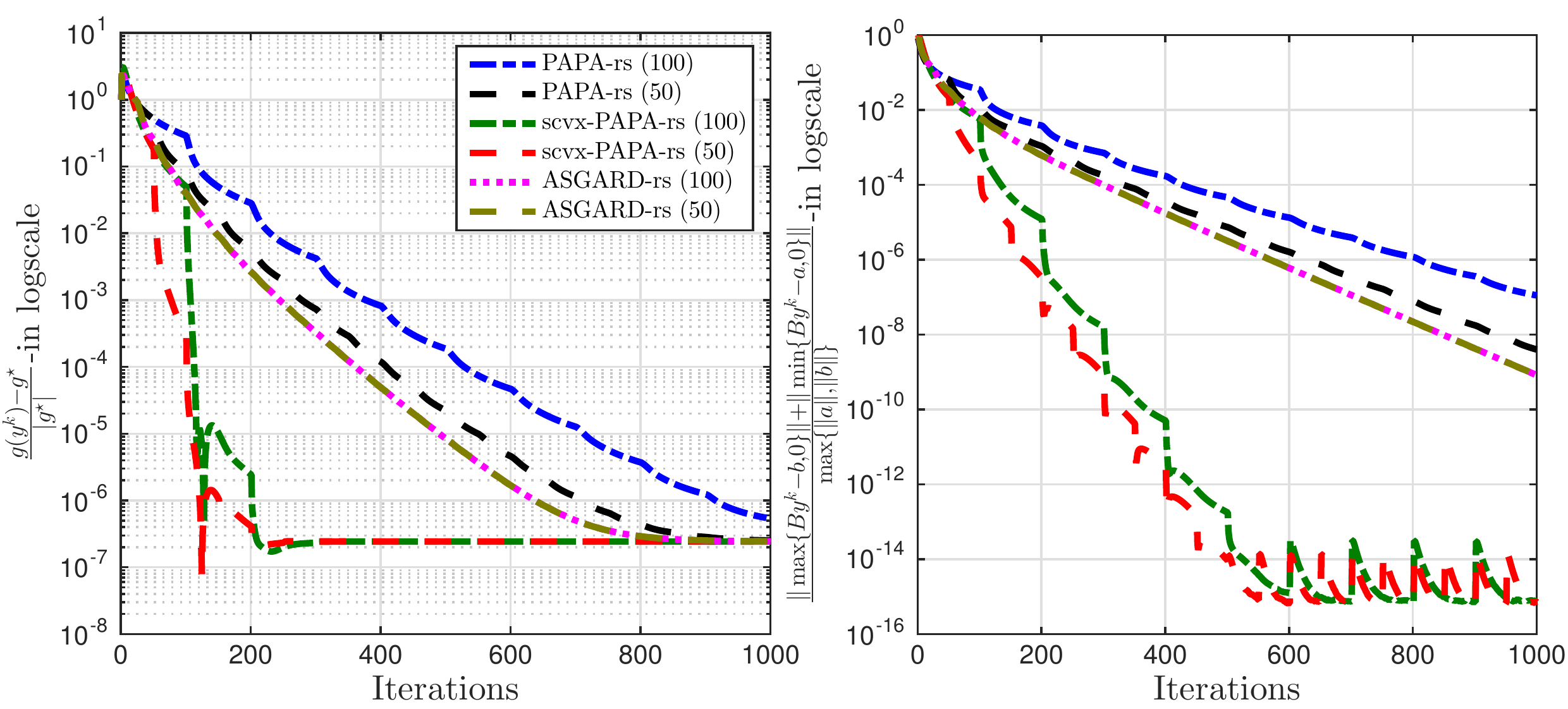}
\vspace{-3ex}
\caption{A comparison of  the restarting variants with two different frequencies on a \textbf{strongly convex} problem instance of \eqref{eq:qp_exam} after $1000$ iterations. 
The problem size is $(p_2 = 2000, n = 2000)$. 
Left: The relative objective residual, Right: The relative feasibility violation.
}\label{fig:scvxQp1000_restart}
\vspace{-6ex}
\end{center}
\end{figure}

Figure~\ref{fig:scvxQp1000_restart} shows that these two frequencies seem not significantly affecting the performance of the restarting algorithms.
For \texttt{PAPA}, $s=50$ slightly works better than $s = 100$.
However, we have observed that if we set the frequency $s$ too small, e.g., $s=10$, then the restarting variants are highly oscillated. 
If we set it too big, then it does not improve the performance and we need to run with a large number of iterations.
In \cite{TranDinh2017f}, we provide a full theory on how to adaptively choose the frequency to guarantee the convergence of the restarting \texttt{ASGARD} methods, which can also be applied to \texttt{PAPA}.

\beforesubsec
\subsection{The elastic-net problem with square-root loss}
\aftersubsec
In this example, we consider the common elastic-net LASSO problem studied in \cite{zou2005regularization} but with a square-root loss as follows:
\begin{equation}\label{eq:elastic_lasso_exam}
F^{\star} := \min_{\yb\in\R^{p_2}}\set{ F(\yb) := \norms{\Bb\yb - \cb}_2 + \tfrac{\kappa_1}{2}\norms{\yb}^2 + \kappa_2\norms{\yb}_1},
\end{equation}
where $\kappa_1 > 0$ and $\kappa_2 > 0$ are two regularization parameters.
Due to the nonsmoothness of the square-root loss $\norms{\Bb\yb - \cb}_2$, this problem is harder to solve than the standard elastic-net in \cite{zou2005regularization}, and algorithms such as FISTA \cite{Beck2009} are not applicable. 

By introducing $\xb := \Bb\yb - \cb$, we can reformulate \eqref{eq:elastic_lasso_exam} into \eqref{eq:constr_cvx} as
\begin{equation*}
F^{\star} := \min_{\zb := (\xb, \yb)}\set{ F(\zb) := \norms{\xb}_2 + \tfrac{\kappa_1}{2}\norms{\yb}^2 + \kappa_2\norms{\yb}_1 ~\mid~ -\xb + \Bb\yb  = \cb}.
\end{equation*}
Since $g(\yb) :=  \tfrac{\kappa_1}{2}\norms{\yb}^2 + \kappa_2\norms{\yb}_1$ is strongly convex, we can apply Algorithm~\ref{alg:A2} to solve it.
By choosing $\gamma_0 = 0$, the two subproblems at Step~\ref{step:x_cvx_subprob} of Algorithm~\ref{alg:A2} become:
\begin{equation*}
\xb^{k+1} := \kprox{\norms{\cdot}_2/\rho_k}{\Bb\hat{\yb}^k - \cb} ~~\text{and}~~~\tilde{\yb}^{k+1} := \kprox{\sigma_k\norms{\cdot}_1}{\ub^k},
\end{equation*}
where $\sigma_k := \frac{\kappa_2}{\kappa_1 + \rho_k\norms{\Bb}^2}$ and $\ub^k := \frac{\norms{\Bb}^2\hat{\yb}^k - \Bb^{\top}(\Bb\hat{\yb}^k - \xb^{k+1} -\cb)}{\kappa_1/\rho_k + \norms{\Bb}^2}$.

In order to apply the Chambolle-Pock method in \cite[Algorithm 2]{Chambolle2011}, we define $F(\Bb\yb) := \norms{\Bb\yb - \cb}_2$ and $G(\yb) :=  \tfrac{\kappa_1}{2}\norms{\yb}^2 + \kappa_2\norms{\yb}_1$.
In this case, $G$ is strongly convex with the parameter $\mu_g = \kappa_2$.
Hence, we choose the parameters as suggested in \cite[Algorithm 2]{Chambolle2011}.
When $\kappa_2 = 0$, i.e., $G$ is non-strongly convex, we use again \cite[Algorithm 1]{Chambolle2011} with the parameters $\sigma = \tau = \frac{1}{2\norm{\Bb}^2}$ and $\theta = 1$.

We compare again the first $7$ algorithms discussed above to solve \eqref{eq:elastic_lasso_exam}.
We generate the data as follows. Matrix $\Bb \in \R^{n\times p_2}$ is generated randomly using standard Gaussian distribution $\Nc(0, 1)$, and then is normalized by $\frac{1}{\sqrt{n}}$, i.e., $\Bb := \tfrac{1}{\sqrt{n}}\texttt{randn}(n, p_2)$.
We generate a sparse vector $\yb^{\natural}$ with $s$-nonzero entries sampling from the standard Gaussian distribution as the true parameter vector.  
Then, we generate the observed measurement as $\cb = \Bb\yb^{\natural} + \bar{\sigma}\Nc(0, 1)$, where $\bar{\sigma} = 0$ in the noiseless case, and $\bar{\sigma} = 10^{-3}$ in the noisy case.
We choose $\kappa_1 = 0.1$ and $\kappa_2 = 0.01$ for our test. In this case, we obtain solutions with approximately $2\%$ sparsity.

\begin{figure}[htp!]
\begin{center}
\vspace{-4ex}
\includegraphics[width=1.0\linewidth]{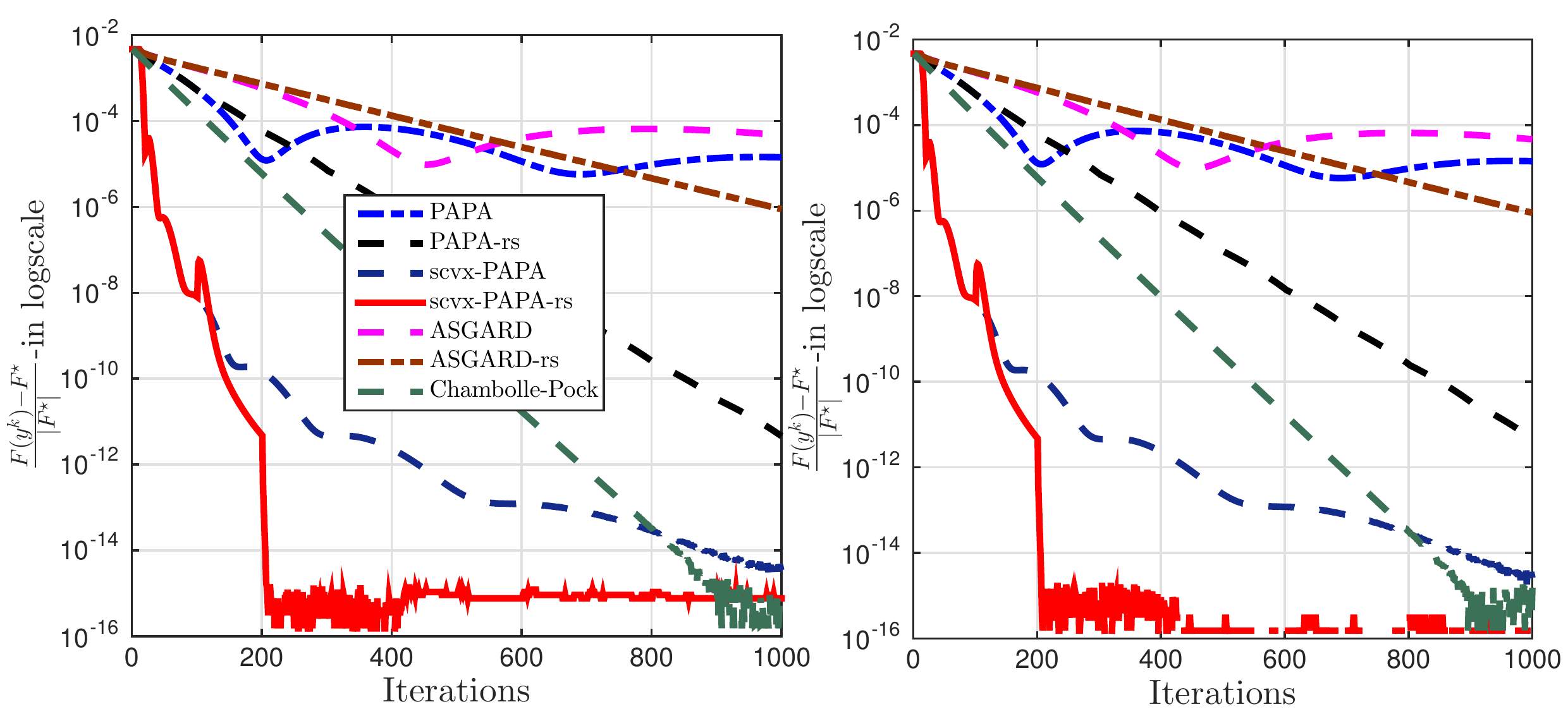}
\vspace{-3ex}
\caption{A comparison of $7$ algorithms on the original objective residual $\tfrac{F(\yb^k) - F^{\star}}{\vert F^{\star}\vert}$ of \eqref{eq:elastic_lasso_exam} after $1000$ iterations. 
The problem size is $(p_2 = 5000, n = 1750, s = 500)$.
Left: \textbf{without noise};  Right: \textbf{with Gaussian noise} (with variance $\bar{\sigma} = 10^{-3}$).
}\label{fig:elastic5000}
\vspace{-6ex}
\end{center}
\end{figure}

Figure \ref{fig:elastic5000} shows the actual convergence behavior of  two instances of \eqref{eq:elastic_lasso_exam} with noise and without noise respectively, in terms of the relative objective residual $\frac{F(\yb^k) - F^{\star}}{\vert F^{\star}\vert}$ of \eqref{eq:elastic_lasso_exam}, where the optimal value $F^{\star}$ is computed via CVX \cite{Grant2006} using Mosek with the best precision.

The theoretical algorithms, i.e., \texttt{PAPA} and \texttt{ASGARD} \cite{TranDinh2015b}, still show the $\BigO{\frac{1}{k}}$-rate on the original objective residual.
But their restarting variants  exhibit a much better convergence rate without employing the strong convexity.
\texttt{ASGARD} with restart performs worse than \texttt{PAPA-rs} in this example.
If we exploit the convexity as in Algorithm \ref{alg:A2}, then this algorithm and its restart variant completely outperform other methods.
The theoretical version of Algorithm~\ref{alg:A2} performs significantly well in this example, beyond the theoretical $\BigO{\tfrac{1}{k^2}}$-rate. 
It even performs better than Chambolle-Pock's method with the strong convexity \cite[Algorithm 2]{Chambolle2011}.
The restarting variant requires approximately $200$ iterations to achieve up to the $10^{-15}$ accuracy level.

\beforesubsec
\subsection{Square-root LASSO}
\aftersubsec
We now show that Algorithm~\ref{alg:A2} still works well even when the problem is not strongly convex using again \eqref{eq:elastic_lasso_exam}.
In this test, we set $\kappa_1 = 0$, and problem \eqref{eq:elastic_lasso_exam} reduces to the common square-root LASSO problem \cite{Belloni2011}.
We test $3$ algorithms as above on a new instance of \eqref{eq:elastic_lasso_exam} with the size $(p_2 = 5000, n = 1750, s = 500)$, and noise.
Since $\kappa_1 = 0$, we do not know if the problem is strongly convex or not. 
Hence, we select three different values of $\mu_g$ in Algorithm~\ref{alg:A2} and the Chambolle-Pock method as $\mu_g = 1$, $\mu_g = 0.1$ and $\mu_g = 0.01$.
Figure \ref{fig:sqrt_lasso} shows the result of this test when we restart at every $100$ iterations (left), and $50$ iterations (right).

\begin{figure}[H]
\begin{center}
\vspace{-4ex}
\includegraphics[width=1.0\linewidth]{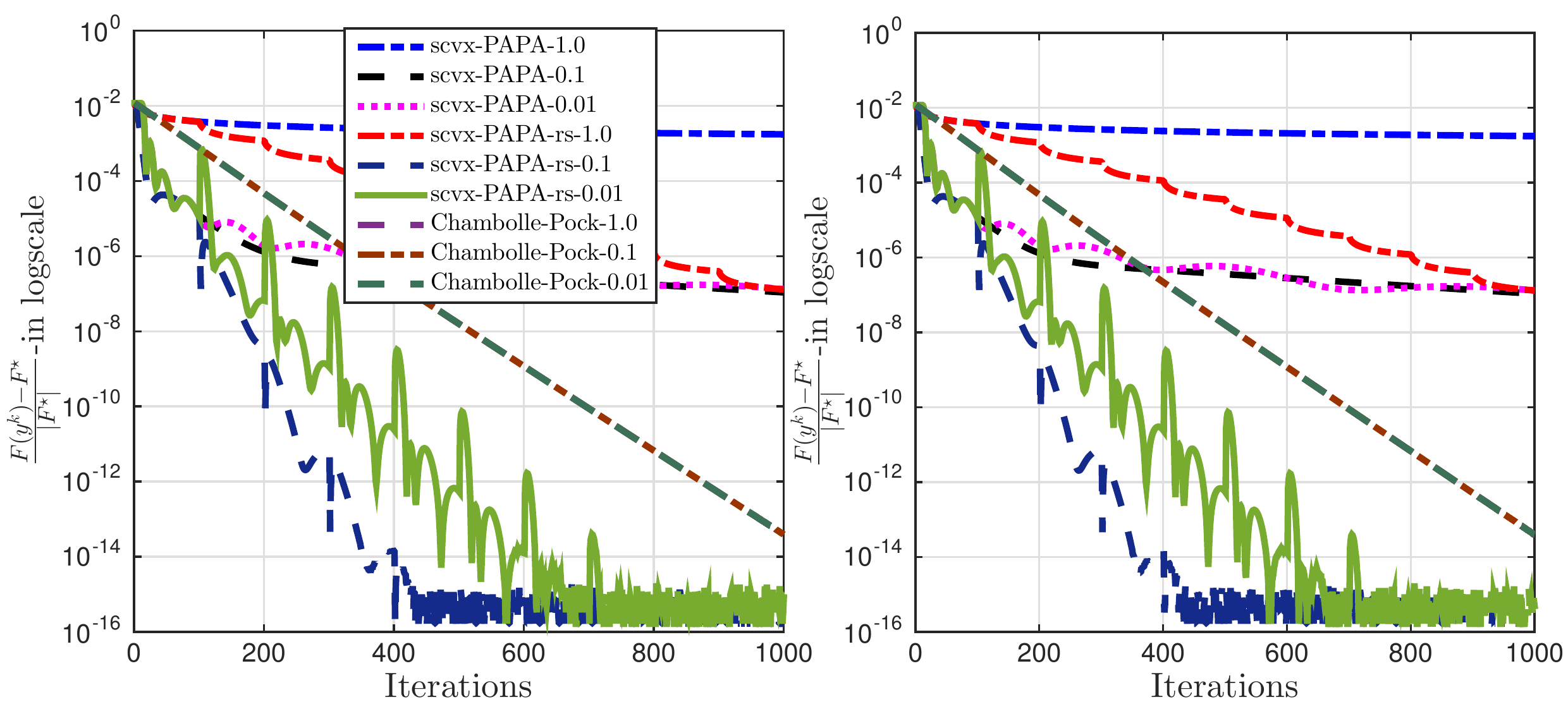}
\vspace{-3ex}
\caption{A comparison of $9$ algorithmic variants on the square-root LASSO problem \eqref{eq:elastic_lasso_exam} (i.e., $\kappa_1 = 0$) after $1000$ iterations. 
The problem size is $(p_2 = 5000, n = 1750, s = 500)$.
Left: Restarting after each $100$ iterations; Right: Restarting after each $50$ iterations.
}\label{fig:sqrt_lasso}
\vspace{-6ex}
\end{center}
\end{figure}

Figure \ref{fig:sqrt_lasso} shows that Algorithm~\ref{alg:A2} still has the $\BigO{\tfrac{1}{k^2}}$-rate.
The restarting Algorithm~\ref{alg:A2} with $\mu_g = 0.1$ still outperforms Algorithm~\ref{alg:A1}, and the Chambolle-Pock method with strong convexity.
When $\mu_g = 0.01$, it still performs well compared to the Chambolle-Pock method, but if $\mu_g = 1$, then it becomes worse.
This is affected by the choice of the initial value $\rho_0 = \frac{\mu_g}{2\norms{\Bb}^2}$, which is inappropriate.

\beforesubsec
\subsection{Image reconstruction with low sampling rate}\label{subsec:image}
\aftersubsec
We consider an image reconstruction problem using  low sampling rates as:
\begin{equation}\label{eq:image_exam}
F^{\star} := \min_{Y\in\R^{m_1\times m_2}} \set{ F(Y) := \tfrac{1}{2}\norms{\Ac(Y) - \bb}_F^2 + \kappa\norms{Y}_{\mathrm{TV}} },
\end{equation}
where $\Ac$ is a linear operator, $\bb$ is a measurement vector, $\norms{\cdot}_F$ is the Frobenius norm, $\kappa > 0$ is a regularization parameter, and $\norms{Y}_{\mathrm{TV}}$ is the total-variation norm.

To apply our methods, we use $\norms{Y}_{\mathrm{TV}} = \norm{D(Y)}_1$ and reformulate this problem into \eqref{eq:constr_cvx2} as
\begin{equation*}
\min_{X, Y}\set{ \kappa\norms{X}_1 + \tfrac{1}{2}\norms{\Ac(Y) - \bb}_F^2 \mid X - D(Y) = 0},
\end{equation*}
where we choose $f(X) := \kappa\norms{X}_1$, $g(Y) := 0$, and $h(Y) := \tfrac{1}{2}\norms{\Ac(Y) - \bb}_F^2$ which is Lipschitz gradient continuous with $L_h := \norms{\Ac^{\ast}\Ac}$.
Although $h$ and $g$ may not be quasi-strongly convex, we still apply Algorithm~\ref{alg:A2}(b) in Subsection~\ref{subsec:three_objs} to solve it.

We also implement Vu-Condat's algorithm \cite{Condat2013,vu2013splitting} and FISTA \cite{Beck2009,Nesterov2007} to directly solve  \eqref{eq:image_exam}.  
For Vu-Condat's algorithm, we implement the following scheme:
\begin{equation}\label{eq:vu_condat}
\left\{\begin{array}{ll}
\tilde{\Xb}^k &:=  \kprox{\tau g}{\Xb^k - \tau(\nabla{h}(\Xb^k) + \Db^{\ast}(\Xb^k))} \vspace{1ex}\\
\tilde{\Xb}^k &:=  \kprox{\sigma f}{\Yb^k + \sigma\Db( 2\tilde{\Xb}^k - \Xb^k)} \vspace{1ex}\\
(\Xb^{k+1},\Yb^{k+1}) &:=  (1-\theta)(\Xb^k, \Yb^k) + \theta(\tilde{\Xb}^k, \tilde{\Yb}^k),
\end{array}\right.
\end{equation}
where $\Db^{\ast}$ is the adjoint operator of $\Db$, $\theta :=  1$, and $\tau > 0$ and $\sigma > 0$ satisfying $\tfrac{1}{\tau} - \sigma\norms{\Db}^2 \geq \tfrac{L_h}{2}$.
The last condition leads $0 < \tau < \frac{2}{L_h}$ and $0 < \sigma \leq \tfrac{1}{\norms{\Db}^2}\left(\tfrac{1}{\tau} - \tfrac{L_h}{2}\right)$.
We test Vu-Condat's algorithm using $(\tau, \sigma) :=  \left(\tfrac{0.089}{L_h},  \tfrac{1}{\norms{\Db}^2}\left(\tfrac{1}{\tau} - \tfrac{L_h}{2}\right) \right)$ after carefully tuning these parameters.

For Algorithm~\ref{alg:A1}, we set $\rho_0 :=  \frac{1}{2\norm{\Db}}$, and for Algorithm~\ref{alg:A2}, we set $\rho_0 :=  \frac{1}{4\norm{\Db}^2}$.
We also implement the restarting variants of both algorithms with a frequency of $s = 20$ iterations.
For FISTA, we compute the proximal operator $\prox_{\norm{\cdot}_{\mathrm{TV}}}$ using a primal-dual method as in \cite{Chambolle2011} by setting the number of iterations at $25$ and $50$, respectively, and also use a fixed restarting strategy after each $50$ iterations \cite{Odonoghue2012}.

We test these algorithms on $6$ MRI images of different sizes downloaded from different websites.
We generate the observed measurement $\bb$ by using subsampling -FFT transform at the rate of $20\%$.
After tuning the regularization parameter $\kappa$, we fix it at $\kappa = 4.0912\times 10^{-4}$ for all the experiments.
Table \ref{tbl:image_reconstruction} shows the results of $8$ algorithms on these MRI images after $200$ iterations in terms of the objective values, computational time, and PSNR (Peak signal-to-noise ratio) \cite{Chambolle2011}.

\begin{table}[hpt!]
\newcommand{\cell}[1]{{\!\!}#1{\!\!}}
\vspace{-4ex}
\begin{center}
\begin{scriptsize}
\caption{The results and performance of $8$ algorithms on $6$ MRI images}\label{tbl:image_reconstruction}
\begin{tabular}{l | rrr | rrr | rrr} \toprule
\multicolumn{1}{c}{}  & \multicolumn{3}{|c}{\texttt{Hip}~$(798\times 802)$} & \multicolumn{3}{|c}{\texttt{Knee}~$(779\times 693)$} & \multicolumn{3}{|c}{\texttt{Brain-tomor}~$(650\times 650)$} \\  \midrule
\cell{Algorithms} & \cell{$F(Y^k)$} & \cell{PSNR} & \cell{Time[s]} & \cell{$F(Y^k)$} & \cell{PSNR} & \cell{Time[s]} & \cell{$F(Y^k)$} & \cell{PSNR} & \cell{Time[s]} \\  \midrule
\cell{\texttt{PAPA}} & 			\cell{0.01070} & \cell{81.56} & \cell{57.97} & 	\cell{0.00840} & \cell{79.62} & \cell{48.69} & 	\cell{0.01050} & \cell{77.82} & \cell{34.63}  \\
\cell{\texttt{PAPA-rs}} &		\cell{0.01056} & \cell{81.35} & \cell{57.57} & 	 \cell{0.00828} & \cell{79.51} & \cell{48.69} & 	\cell{0.01039} & \cell{77.72} & \cell{34.36}  \\
\cell{\texttt{scvx-PAPA}} &  	\cell{0.01034} & \cell{81.24} & \cell{64.34} &	\cell{0.00805} & \cell{79.44} & \cell{53.81} &  	\cell{0.01025} & \cell{77.70} & \cell{37.98} \\ 	
\cell{\texttt{scvx-PAPA-rs}} & 	\cell{0.01035} & \cell{81.23} & \cell{67.47} & 	\cell{0.00807} & \cell{79.45} & \cell{53.48} & 	\cell{0.01026} & \cell{77.69} & \cell{37.93} \\ 
\cell{\texttt{Vu-Condat-tuned}} &	\cell{0.01030} & \cell{81.23} & \cell{56.02} & 	\cell{0.00801} & \cell{79.45} & \cell{45.92} & 	\cell{0.01023} & \cell{77.70} & \cell{31.71} \\
\cell{\texttt{AcProxGrad-25}} &	\cell{0.01179} & \cell{82.25} & \cell{1055.94} & 	\cell{0.00917} & \cell{79.78} & \cell{844.47} & 	\cell{0.01133} & \cell{78.74} & \cell{674.08} \\ 
\cell{\texttt{AcProxGrad-rs}} & 	\cell{0.01179} & \cell{82.25} & \cell{1052.23} &	\cell{0.00917} & \cell{79.78} & \cell{860.97} &  	\cell{0.01133} & \cell{78.74} & \cell{652.91} \\
\cell{\texttt{AcProxGrad-50}} &	\cell{0.01104} & \cell{82.30} & \cell{2052.68} & 	\cell{0.00865} & \cell{79.83} & \cell{1652.81} &  \cell{0.01079} & \cell{78.80} & \cell{1264.33} \\ \bottomrule
\multicolumn{1}{c}{}  & \multicolumn{3}{|c}{\texttt{Body}~$(895 \times 320)$} & \multicolumn{3}{|c}{\texttt{Confocal}~$(370 \times 370)$} & \multicolumn{3}{|c}{\texttt{Leg}~$(588\times 418)$} \\  \midrule
\cell{Algorithms} & \cell{$F(Y^k)$} & \cell{PSNR} & \cell{Time[s]} & \cell{$F(Y^k)$} & \cell{PSNR} & \cell{Time[s]} & \cell{$F(Y^k)$} & \cell{PSNR} & \cell{Time[s]} \\ \midrule
\cell{\texttt{PAPA}} & 			 \cell{0.01674} & \cell{66.92} & \cell{22.80} & 	\cell{0.02539} & \cell{67.58} & \cell{12.12} & 	\cell{0.01050} & \cell{74.50} & \cell{22.30} \\ 
\cell{\texttt{PAPA-rs}} &		 \cell{0.01664} & \cell{66.96} & \cell{22.70} & 	\cell{0.02534} & \cell{67.60} & \cell{11.80} & 	\cell{0.01040} & \cell{74.37} & \cell{22.81} \\ 
\cell{\texttt{scvx-PAPA}} &  	 \cell{0.01653} & \cell{66.98} & \cell{25.09} & 	\cell{0.02528} & \cell{67.67} & \cell{13.28} & 	\cell{0.01030} & \cell{74.36} & \cell{25.22} \\
\cell{\texttt{scvx-PAPA-rs}} &      \cell{0.01664} & \cell{66.98} & \cell{25.15} & 	\cell{0.02529} & \cell{67.61} & \cell{13.41} & 	\cell{0.01030} & \cell{74.34} & \cell{25.83} \\ 
\cell{\texttt{Vu-Condat-tuned}} & \cell{0.01652} & \cell{66.99} & \cell{22.99} & 	\cell{0.02527} & \cell{67.74} & \cell{10.84} & 	\cell{0.01028} & \cell{74.38} & \cell{20.70}  \\ 
\cell{\texttt{AcProxGrad-25}} &   \cell{0.01728} & \cell{67.35} & \cell{400.36} & 	\cell{0.02652} & \cell{68.97} & \cell{136.46} & 	\cell{0.01104} & \cell{75.23} & \cell{361.63}  \\
\cell{\texttt{AcProxGrad-rs}} &    \cell{0.01728} & \cell{67.35} & \cell{431.07} & 	\cell{0.02652} & \cell{68.97} & \cell{132.13} & 	\cell{0.01104} & \cell{75.23} & \cell{366.83} \\
\cell{\texttt{AcProxGrad-50}} &  \cell{0.01697} & \cell{67.39} & \cell{817.63} & 	\cell{0.02639} & \cell{68.97} & \cell{256.97} & 	\cell{0.01074} & \cell{75.21} & \cell{700.66}  \\ 
\bottomrule
\end{tabular}
\end{scriptsize}
\vspace{-4ex}
\end{center}
\end{table}

Table \ref{tbl:image_reconstruction} shows that our algorithms and Vu-Condat's method outperform FISTA in terms of computational time. 
This is not surprised since FISTA requires to evaluate an expensive proximal operator of the TV-norm at each iteration.
However, it gives a slightly better PSNR while producing worse objective values than our methods.
Vu-Condat's algorithm with tuned parameters has a similar performance as our methods.
Unfortunately, the restarting variants with a fixed frequency, e.g., $s = 50$, do not significantly improve the performance of all methods in this example.
This happens perhaps due to the nonstrong convexity of the problem.

In order to observe the quality of reconstruction, we plot the result of $8$ algorithm in Figure~\ref{fig:im_reconstruction} for one MRI image (\texttt{Hip}) of the size $798\times 802$ (i.e., $p_2=639,996$).
\begin{figure}[hpt!]
\vspace{-3ex}
\begin{center}
\includegraphics[width=0.95\linewidth]{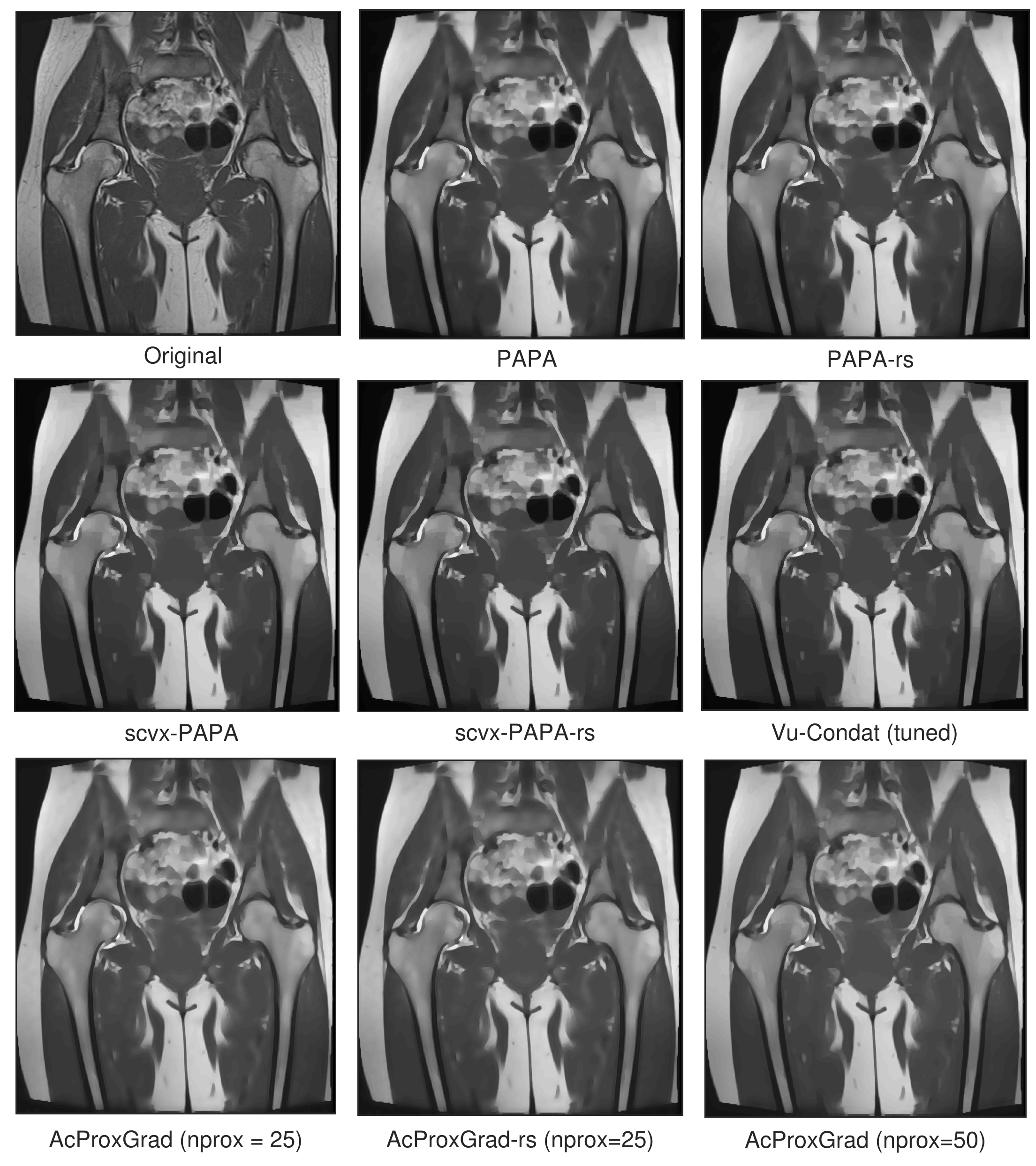}
\vspace{-1ex}
\caption{The original image, and its reconstructions from $8$ algorithms using $20\%$ of measurement. Here, \texttt{nprox} is the number of iterations required to evaluate the proximal operator of the TV-norm, and Vu-Condat (tuned) is Vu-Condat's method \cite{Condat2013,vu2013splitting} using tuned parameters.}\label{fig:im_reconstruction}
\vspace{-5ex}
\end{center}
\end{figure}
Clearly, we can see that the quality of the reconstruction is still acceptable with only $20\%$ of the measurement. 

\beforesubsec
\subsection{Low-rank matrix recovery with square-root loss}
\aftersubsec
We consider a low-rank matrix recovery problem with square-root loss, which can be considered as a penalized formulation of the model in \cite{Recht2010}:
\vspace{-1ex}
\begin{equation}\label{eq:low_rank_exam}
F^{\star} := \min_{Y\in\R^{m\times q}} \Big\{ F(Y) := \norms{\Bc(Y) - \cb}_2 + \lambda \norms{Y}_{\ast} \Big\},
\vspace{-1ex}
\end{equation}
where $\norms{\cdot}_{\ast}$ is a nuclear norm, $\Bc : \R^{m\times q} \to \R^n$ is a linear operator, $\cb\in\R^n$ is a given observed vector, and $\lambda > 0$ is a  penalty parameter.
By letting $\zb := (\xb, Y)$, $F(\zb) := \norms{\xb}_2 + \lambda \norms{Y}_{\ast}$ and $-\xb + \Bc(Y)  = \cb$, we can reformulate \eqref{eq:low_rank_exam} into \eqref{eq:constr_cvx}. 

Now, we apply Algorithm~\ref{alg:A1} and its restarting variant to solve \eqref{eq:low_rank_exam}.
Although $f$ and $g$ in problem \eqref{eq:low_rank_exam} are non-strongly convex, we still apply Algorithm \ref{alg:A2} to solve \eqref{eq:low_rank_exam}.
The main computation at each iteration of these algorithms consists of: $\prox_{\norms{\cdot}_{\ast}}$, $\Bc(Y)$ and $\Bc^{*}(\xb)$ which dominate the overall computational time.
We also compare these three algorithmic variants with ADMM.
Since ADMM often requires to solve two convex subproblems, we reformulate \eqref{eq:low_rank_exam} into 
\vspace{-1ex}
\begin{equation*}
\min_{\xb, Y, Z}\Big\{ \norms{\xb}_2 + \lambda\norm{Z}_{\ast} ~\mid~ -\xb + \Bc(\Yb) = \cb,~\Yb - \Zb = 0 \Big\},
\vspace{-1ex}
\end{equation*}
by introducing two auxiliary variables $x := \Bc(Y) - c$ and $Z := Y$.
The main computation at each iteration of ADMM includes $\prox_{\norms{\cdot}_{\ast}}$, $\Bc(Y)$, $\Bc^{*}(\xb)$, and the solution of $(\Id + \Bc^{\ast}\Bc)(Y) = \rb_k$, where $r_k$ is a residual term.
In this particular example, since $\Bc$ and $\Bc^{\ast}$ are given in operators, we apply a preconditioned conjugate gradient (PCG) method to solve it. We warm-start PCG and terminate it with a tolerance of $10^{-5}$ or  a maximum of $50$ iterations.
We tune the penalty parameter $\rho$ in ADMM for our test and find that $\rho = 0.25$ works best.

We test four algorithms on $5$ Logo images: MIT, UNC, EPFL, TUM and IBM.
The size of these images is  $256\times 256$, which shows that the number of variables are $65,536$.
We generate the observed measurement $\cb$ by using subsampling -FFT transform as in Subsection \ref{subsec:image} but with a rate of $35\%$.
We also add a Gaussian noise to $\cb$ as $\cb = \Bc(Y^{\natural}) + \Nc(0, 10^{-3}\max_{ij}\vert Y^{\natural}_{ij}\vert)$, where $Y^{\natural}$ is a clean low-rank image.
We tune the value of $\lambda$ for these five images and find that $\lambda \in \set{ 0.175, 0.125, 0.15, 0.125, 0.125 }$, respectively works well for these images.

We run four algorithms on five images up to $200$ iterations. 
The results and performance are reported in Table \ref{tbl:low_rank_recovery}.
Here, \texttt{Time} is the computational time in second, \texttt{Error} is the relative error $\frac{ \norms{Y^k - Y^{\natural} }_F}{\norms{Y^{\natural} }_F}$ between the approximate solution $Y^k$ and the true image, \texttt{PSNR} is the peak signal-to-noise ratio, \texttt{rank} is the rank of $Y^k$ after rounding up to $10^{-4}$, and \texttt{Res} is the relative residual $\norms{\Bc(Y^k) - \cb}_2/\norm{\cb}_2$.
\begin{table}[hpt!]
\vspace{-4ex}
\newcommand{\cell}[1]{{\!\!\!}#1{\!\!}}
\begin{center}
\begin{scriptsize}
\caption{The results and performance of $4$ algorithms on $5$ Logo images of size $256\times 256$.}\label{tbl:low_rank_recovery}
\begin{tabular}{l | rrrr rr | rrrr rr } \toprule
\multicolumn{1}{c}{}  & \multicolumn{6}{|c}{PAPA} & \multicolumn{6}{|c}{PAPA-rs}  \\  \midrule
\cell{Name} & \cell{Time} & \cell{Error} & \cell{$F(Y^k)$} & \cell{PSNR} & \cell{rank} & \cell{Res} & \cell{Time} & \cell{Error} & \cell{$F(Y^k)$} & \cell{PSNR} & \cell{rank} & \cell{Res} \\  \midrule
\cell{MIT} &  \cell{$7.05$} & \cell{$0.0510$} & \cell{0.34838} & \cell{74.026} & \cell{6} & \cell{0.103} & \cell{$7.38$} & \cell{$0.0511$} & \cell{0.34838} & \cell{74.014}  & \cell{6} & \cell{0.103} \\ 
\cell{UNC} &  \cell{$9.29$} & \cell{$0.0610$} & \cell{0.28197} & \cell{72.479} & \cell{42} & \cell{0.110} & \cell{$9.33$} & \cell{$0.0610$} & \cell{0.28199} & \cell{72.467}  & \cell{42} & \cell{0.110} \\ 
\cell{EPFL} & \cell{$9.27$} & \cell{$0.0823$} & \cell{0.41245} & \cell{69.896} & \cell{52} & \cell{0.107} & \cell{$9.41$} & \cell{$0.0822$} & \cell{0.41255} & \cell{69.885}  & \cell{52} & \cell{0.107} \\ 
\cell{TUM} & \cell{$8.07$} & \cell{$0.0374$} & \cell{0.26573} & \cell{76.711} & \cell{49} & \cell{0.087} & \cell{$7.10$} & \cell{$0.0377$} & \cell{0.26595} & \cell{76.649}  & \cell{49} & \cell{0.087} \\ 
\cell{IBM} & \cell{$9.24$} & \cell{$0.0627$} & \cell{0.29107} & \cell{72.229} & \cell{32} & \cell{0.107} & \cell{$8.24$} & \cell{$0.0629$} & \cell{0.29110} & \cell{72.212}  & \cell{32} & \cell{0.107} \\ 
\midrule
\multicolumn{1}{c}{}  & \multicolumn{6}{|c}{scvx-PAPA} & \multicolumn{6}{|c}{ADMM (tuned)}  \\  \midrule
\cell{Name} & \cell{Time} & \cell{Error} & \cell{$F(Y^k)$} & \cell{PSNR} & \cell{rank} & \cell{Res} & \cell{Time} & \cell{Error} & \cell{$F(Y^k)$} & \cell{PSNR} & \cell{rank} & \cell{Res} \\  \midrule
\cell{MIT} &  \cell{$7.45$} & \cell{$0.0510$} & \cell{0.34838} & \cell{74.030} & \cell{6} & \cell{0.103} & \cell{$14.18$} & \cell{$0.0510$} & \cell{0.34838} & \cell{74.022} & \cell{6} & \cell{0.103} \\ 
\cell{UNC} &  \cell{$8.90$} & \cell{$0.0609$} & \cell{0.28194} & \cell{72.492} & \cell{42} & \cell{0.110} & \cell{$14.91$} & \cell{$0.0609$} & \cell{0.28198} & \cell{72.476} & \cell{42} & \cell{0.110} \\ 
\cell{EPFL} &  \cell{$8.07$} & \cell{$0.0821$} & \cell{0.41240} & \cell{69.901} & \cell{52} & \cell{0.107} & \cell{$14.30$} & \cell{$0.0822$} & \cell{0.41249} & \cell{69.893} & \cell{53} & \cell{0.107} \\ 
\cell{TUM}  &  \cell{$7.57$} & \cell{$0.0374$} & \cell{0.26569} & \cell{76.730} & \cell{48} & \cell{0.086} & \cell{$14.51$} & \cell{$0.0375$} & \cell{0.26579} & \cell{76.687} & \cell{49} & \cell{0.087} \\ 
\cell{IBM} & \cell{$9.43$} & \cell{$0.0627$} & \cell{0.29105} & \cell{72.239} & \cell{32} & \cell{0.107} & \cell{$14.90$} & \cell{$0.0628$} & \cell{0.29108} & \cell{72.224} & \cell{32} & \cell{0.107} \\ 
\bottomrule
\end{tabular}
\end{scriptsize}
\vspace{-5ex}
\end{center}
\end{table}

As we can observed from Table \ref{tbl:low_rank_recovery} that four algorithms achieve almost similar results.
Since three variants of PAPA have the same per-iteration complexity, they have almost the same computational time in this test.
ADMM is slower since it requires to solve a linear system at each iteration with PCG.
We note that these algorithms give a low-rank solution compared to the size of $256\times 256$ of the images.
To see how the low-rankness is reflected in the final output, we plot three Logo images: MIT, UNC, and IBM in Figure \ref{fig:low_rank_recovery}.
Due to their low-rankness, MIT and IBM are clearer than UNC.
The quality of the recovered images is reflected through \texttt{PSNR} and \texttt{Error} in Table \ref{tbl:low_rank_recovery}.

\begin{figure}[hpt!]
\vspace{-2ex}
\begin{center}
\includegraphics[width=0.98\linewidth]{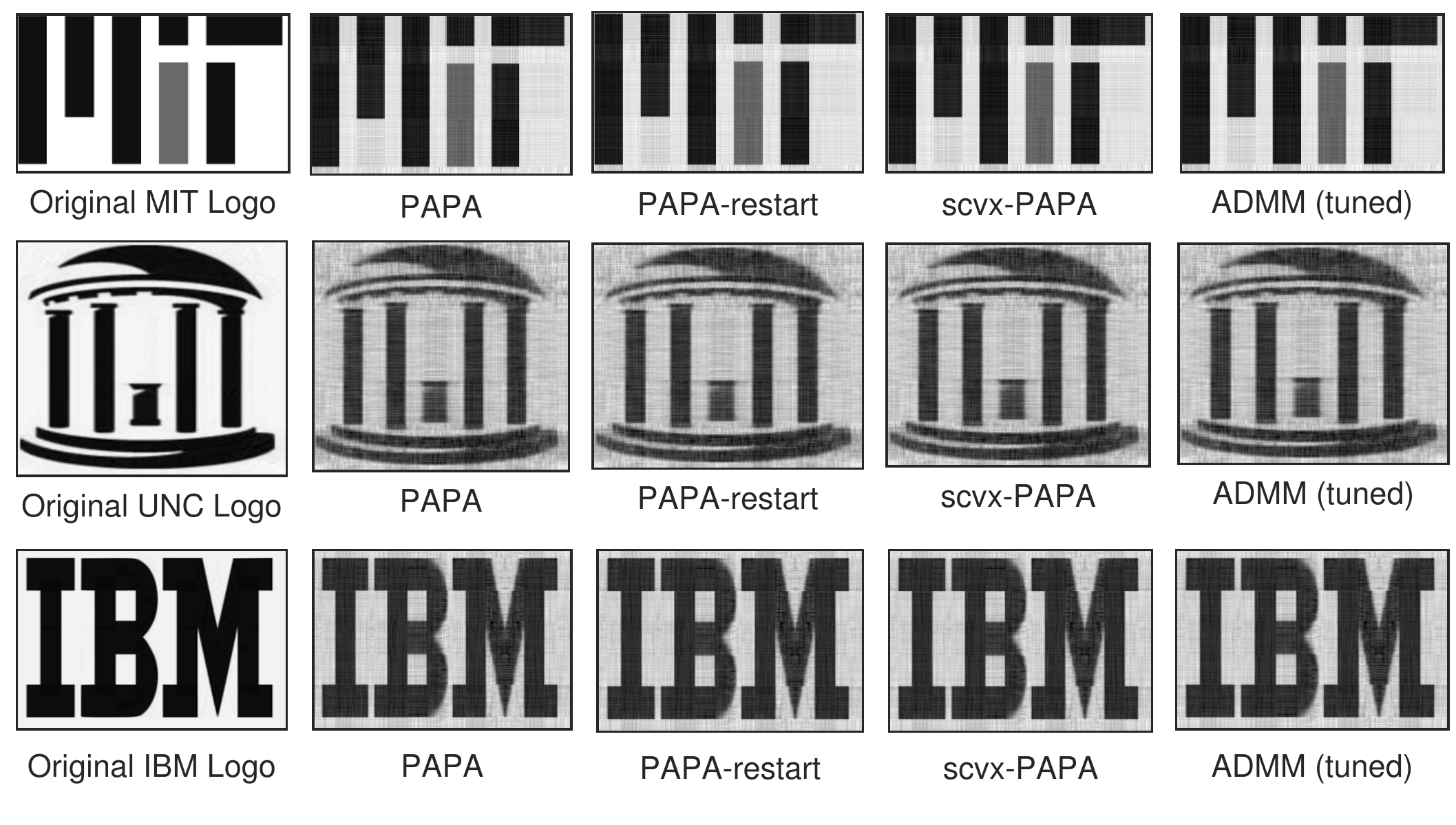}
\vspace{-1ex}
\caption{Three original Logo images and their recovered images from 4 algorithms.}\label{fig:low_rank_recovery}
\vspace{-5ex}
\end{center}
\end{figure}

\vspace{0.5ex}
\begin{footnotesize}
\noindent \textbf{Acknowledgments:} 
This work is partly supported by the NSF-grant, DMS-1619884, USA, and the Nafosted grant 101.01-2017.315 (Vietnam).
\end{footnotesize}

\appendix
\normalsize
\beforesec
\section{Appendix: The proof of technical results in the main text}\label{sec:appendix}
\aftersec
This appendix provides the full proof of the technical results in the main text.

\beforesubsec
\subsection{Properties of the distance function $\mathrm{dist}_{\Kc}(\cdot)$.}\label{apdx:proj_properties}
\aftersubsec
We investigate some necessary properties of $\psi$ defined by \eqref{eq:Phi_func} to analyze the convergence of Algorithms~\ref{alg:A1} and \ref{alg:A2}.
We first consider the following distance function:
\begin{equation}\label{eq:dist_func}
\varphi(\ub) := \tfrac{1}{2}\kdist{\Kc}{\ub}^2 = \displaystyle\min_{\rb\in\Kc}\tfrac{1}{2}\norms{\rb - \ub}^2  = \tfrac{1}{2}\norms{\rb^{\ast}(\ub) - \ub}^2 = \tfrac{1}{2}\norms{\kproj{\Kc}{\ub} - \ub}^2,
\end{equation}
where $\rb^{\ast}(\ub) := \kproj{\Kc}{\ub}$ is the projection of $\ub$ onto $\Kc$.
Clearly, \eqref{eq:dist_func} becomes
\begin{equation}\label{eq:phi_minmax}
\varphi(\ub) = \displaystyle\max_{\lbd\in\R^n}\displaystyle\min_{\rb\in\Kc}\set{\iprods{\ub - \rb, \lbd} - \tfrac{1}{2}\norm{\lbd}^2} = \displaystyle\max_{\lbd\in\R^n}\set{\iprods{\ub, \lbd} - s_{\Kc}(\lbd) - \tfrac{1}{2}\norms{\lbd}^2},
\end{equation}
where $s_{\Kc}(\lbd) := \sup_{\rb\in\Kc}\iprods{\lbd, \rb}$ is the support function of $\Kc$.

The function $\varphi$ is convex and differentiable. Its gradient is given by
\begin{equation}\label{eq:grad_varphi}
\nabla{\varphi}(\ub) = \ub - \kproj{\Kc}{\ub} = \nu^{-1}\kproj{\Kc^{\circ}}{\nu \ub},
\end{equation}
where $\Kc^{\circ} := \set{ \vb\in\R^n \mid \iprods{\ub, \vb} \leq 1, ~\ub\in\Kc}$ is the polar set of $\Kc$, and $\nu > 0$ solves $\nu = \iprods{\kproj{\Kc^{\circ}}{\nu\ub}, \nu u - \kproj{\Kc^{\circ}}{\nu\ub}}$.
If $\Kc$ is a cone, then $\nabla{\varphi}(\ub) = \kproj{\Kc^{\circ}}{\ub} = \kproj{-\Kc^{\ast}}{\ub}$, where $\Kc^{\ast} := \set{\vb\in\R^n \mid \iprods{\ub, \vb} \geq 0, ~\ub\in\Kc}$ is the dual cone of $\Kc$ \cite{Bauschke2011}.

By using the property of $\proj_{\Kc}(\cdot)$, it is easy to prove that $\nabla{\varphi}(\cdot)$ is Lipschitz continuous with the Lipschitz constant $L_{\varphi} = 1$.
Hence, for any $\ub, \vb\in\R^n$, we have (see \cite{Nesterov2004}):
\begin{equation}\label{eq:lipschitz_xy}
\begin{array}{ll}
\varphi(\ub) &+ \iprods{\nabla{\varphi}(\ub), \vb - \ub} + \tfrac{1}{2}\norms{\nabla{\varphi}(\vb) - \nabla{\varphi}(\ub)}^2 \leq \varphi(\vb), \vspace{1ex}\\
\varphi(\vb) & \leq \varphi(\ub) + \iprods{\nabla{\varphi}(\ub), \vb - \ub} + \tfrac{1}{2}\norms{\vb - \ub}^2.
\end{array}
\end{equation}
Let us recall $\psi$ defined by \eqref{eq:Phi_func} as
\begin{equation}\label{eq:psi_func_def}
\psi(\xb, \yb) := \varphi(\Ab\xb + \Bb\yb - \cb) = \tfrac{1}{2}\kdist{\Kc}{\Ab\xb + \Bb\yb - \cb}^2.
\end{equation}
Then, $\psi$ is also convex and differentiable, and its gradient is given by 
\begin{equation}\label{eq:psi_grad_def}
\begin{array}{ll}
\nabla_x\psi(\xb, \yb) &= \Ab^{\top}\left(\Ab\xb + \Bb\yb - \cb - \kproj{\Kc}{\Ab\xb + \Bb\yb - \cb}\right), \vspace{1ex}\\
\nabla_y\psi(\xb, \yb) &= \Bb^{\top}\left(\Ab\xb + \Bb\yb - \cb - \kproj{\Kc}{\Ab\xb + \Bb\yb - \cb}\right).
\end{array}
\end{equation}
For given $\xb^{k+1}\in\R^{p_1}$ and $\hat{\yb}^k\in\R^{p_2}$, let us define the following two functions:
\begin{equation}\label{eq:lin_func}
{\!\!\!}\begin{array}{ll}
 \Qc_k(\yb) &:= \psi(\xb^{k+1}, \hat{\yb}^k) + \iprods{\nabla_y\psi(\xb^{k+1}, \hat{\yb}^k), \yb - \hat{\yb}^k} + \tfrac{\norms{\Bb}^2}{2}\norms{\yb - \hat{\yb}^k}^2. \vspace{1ex}\\
\ell_k(\zb ) &:= \psi(\xb^{k+1}, \hat{\yb}^k) + \iprods{\nabla_x\psi(\xb^{k+1}, \hat{\yb}^k), \xb -  \xb^{k+1}} + \iprods{\nabla_y\psi(\xb^{k+1}, \hat{\yb}^k), \yb -  \hat{\yb}^k}.
\end{array}{\!\!}
\end{equation}
Then, the following lemma provides some properties of $\ell_k$ and $\Qc_k$. 

\begin{lemma}\label{le:property_of_lk}
Let $\zb^{\star} = (\xb^{\star}, \yb^{\star}) \in\R^{p}$ be such that $\Ab\xb^{\star} + \Bb\yb^{\star} - \cb\in\Kc$. 
Then, for $\ell_k$ defined by \eqref{eq:lin_func} and $\psi$ defined by \eqref{eq:psi_func_def}, we have
\begin{equation}\label{eq:lin_func_pro}
\ell_k(\zb^{\star}) \leq -\tfrac{1}{2}\norms{\hat{\sb}^{k+1}}^2 ~~~\text{and}~~~~\ell_k(\zb^k) \leq \psi(\xb^k, \yb^k) - \tfrac{1}{2}\norms{\sb^k - \hat{\sb}^{k+1}}^2,
\end{equation}
where $\hat{\sb}^{k+1} :=  \Ab\xb^{k+1} + \Bb\hat{\yb}^k - \cb - \proj_{\Kc}\big(\Ab\xb^{k+1} + \Bb\hat{\yb}^k - \cb\big)$ and $\sb^k := \Ab\xb^k + \Bb\yb^k - \cb - \proj_{\Kc}\big(\Ab\xb^k + \Bb\yb^k - \cb\big)$.
Moreover, we also have
\begin{equation}\label{eq:psi_grad_pro}
\psi(\xb^{k+1}, \yb) \leq   \Qc_k(\yb)~~\text{for all}~\yb\in\R^{p_2}.
\end{equation}
\end{lemma}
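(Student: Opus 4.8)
The plan is to deduce everything from the two inequalities in \eqref{eq:lipschitz_xy} applied to the function $\varphi$ of \eqref{eq:dist_func}, together with the chain rule for $\psi$ recorded in \eqref{eq:psi_grad_def}. Throughout, write $\hat{\ub}^k := \Ab\xb^{k+1} + \Bb\hat{\yb}^k - \cb$ and $\ub^k := \Ab\xb^k + \Bb\yb^k - \cb$, so that $\psi(\xb^{k+1},\hat{\yb}^k) = \varphi(\hat{\ub}^k)$, $\psi(\xb^k,\yb^k) = \varphi(\ub^k)$, $\hat{\sb}^{k+1} = \nabla\varphi(\hat{\ub}^k) = \hat{\ub}^k - \proj_{\Kc}(\hat{\ub}^k)$, and $\sb^k = \nabla\varphi(\ub^k)$. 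The key observation making the linearization $\ell_k$ tractable is that, by \eqref{eq:psi_grad_def}, the affine form $\iprods{\nabla_x\psi(\xb^{k+1},\hat{\yb}^k), \xb - \xb^{k+1}} + \iprods{\nabla_y\psi(\xb^{k+1},\hat{\yb}^k), \yb - \hat{\yb}^k}$ equals $\iprods{\nabla\varphi(\hat{\ub}^k), \Ab\xb + \Bb\yb - \Ab\xb^{k+1} - \Bb\hat{\yb}^k}$; hence for any $\zb = (\xb,\yb)$ with $\ub := \Ab\xb + \Bb\yb - \cb$ we have the clean identity $\ell_k(\zb) = \varphi(\hat{\ub}^k) + \iprods{\nabla\varphi(\hat{\ub}^k), \ub - \hat{\ub}^k}$.

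For the first bound in \eqref{eq:lin_func_pro}: apply this identity with $\zb = \zb^{\star}$, so $\ub = \ub^\star := \Ab\xb^\star + \Bb\yb^\star - \cb \in \Kc$. The first inequality in \eqref{eq:lipschitz_xy}, read with $\ub \leftarrow \hat{\ub}^k$ and $\vb \leftarrow \ub^\star$, gives
\begin{equation*}
\varphi(\hat{\ub}^k) + \iprods{\nabla\varphi(\hat{\ub}^k), \ub^\star - \hat{\ub}^k} + \tfrac{1}{2}\norms{\nabla\varphi(\ub^\star) - \nabla\varphi(\hat{\ub}^k)}^2 \leq \varphi(\ub^\star).
\end{equation*}
Since $\ub^\star \in \Kc$ we have $\varphi(\ub^\star) = 0$ and $\nabla\varphi(\ub^\star) = \ub^\star - \proj_{\Kc}(\ub^\star) = 0$, so the left side collapses to $\ell_k(\zb^\star) + \tfrac12\norms{\nabla\varphi(\hat{\ub}^k)}^2 \leq 0$, i.e. $\ell_k(\zb^\star) \leq -\tfrac12\norms{\hat{\sb}^{k+1}}^2$. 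For the second bound, apply the same inequality with $\ub \leftarrow \hat{\ub}^k$ and $\vb \leftarrow \ub^k$: this yields $\varphi(\hat{\ub}^k) + \iprods{\nabla\varphi(\hat{\ub}^k), \ub^k - \hat{\ub}^k} \leq \varphi(\ub^k) - \tfrac12\norms{\nabla\varphi(\ub^k) - \nabla\varphi(\hat{\ub}^k)}^2$, and the left side is exactly $\ell_k(\zb^k)$ by the identity above, while $\nabla\varphi(\ub^k) - \nabla\varphi(\hat{\ub}^k) = \sb^k - \hat{\sb}^{k+1}$. Finally, \eqref{eq:psi_grad_pro} is just the descent-lemma-type second inequality of \eqref{eq:lipschitz_xy} composed with the linear map $\yb \mapsto \Ab\xb^{k+1} + \Bb\yb - \cb$, whose Lipschitz constant contributes the factor $\norms{\Bb}^2$: writing $\vb \leftarrow \Ab\xb^{k+1} + \Bb\yb - \cb$ and $\ub \leftarrow \hat{\ub}^k$, the bound $\varphi(\vb) \leq \varphi(\ub) + \iprods{\nabla\varphi(\ub), \vb - \ub} + \tfrac12\norms{\vb-\ub}^2$ becomes $\psi(\xb^{k+1},\yb) \leq \psi(\xb^{k+1},\hat{\yb}^k) + \iprods{\nabla_y\psi(\xb^{k+1},\hat{\yb}^k), \yb-\hat{\yb}^k} + \tfrac12\norms{\Bb(\yb-\hat{\yb}^k)}^2 \leq \Qc_k(\yb)$, since $\norms{\Bb(\yb-\hat{\yb}^k)}^2 \leq \norms{\Bb}^2\norms{\yb-\hat{\yb}^k}^2$.

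I expect no genuine obstacle here; the only point requiring a little care is the bookkeeping that reconciles the two-argument gradients $\nabla_x\psi, \nabla_y\psi$ of \eqref{eq:psi_grad_def} with the single-variable gradient $\nabla\varphi$ — i.e. verifying that the cross terms in $\ell_k$ really do assemble into $\iprods{\nabla\varphi(\hat{\ub}^k),\ub-\hat{\ub}^k}$ after the affine substitution — and, for \eqref{eq:psi_grad_pro}, correctly tracking where the $\norms{\Bb}^2$ comes from. Everything else is a direct specialization of the standard smoothness inequalities \eqref{eq:lipschitz_xy} for the $1$-smooth convex function $\varphi = \tfrac12\kdist{\Kc}{\cdot}^2$.
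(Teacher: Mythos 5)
Your proof is correct and follows essentially the same route as the paper: both arguments reduce all three claims to the smoothness inequalities \eqref{eq:lipschitz_xy} for the $1$-smooth function $\varphi$ composed with the affine map $(\xb,\yb)\mapsto \Ab\xb+\Bb\yb-\cb$, with the second bound of \eqref{eq:lin_func_pro} and \eqref{eq:psi_grad_pro} handled exactly as in the paper. The only (harmless) variation is in the first bound, where the paper expands $\ell_k(\zb^{\star})$ and invokes the projection inequality $\iprods{\hat{\ub}^k-\proj_{\Kc}(\hat{\ub}^k),\,\rb^{\star}-\proj_{\Kc}(\hat{\ub}^k)}\leq 0$ directly, while you obtain the same estimate from the first inequality of \eqref{eq:lipschitz_xy} together with the observations $\varphi(\ub^{\star})=0$ and $\nabla\varphi(\ub^{\star})=0$ at the feasible point $\ub^{\star}\in\Kc$.
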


\begin{proof}
Since $\Ab\xb^{\star} + \Bb\yb^{\star} - \cb\in\Kc$, if we define $\rb^{\star} := \Ab\xb^{\star} + \Bb\yb^{\star} - \cb$, then $\rb^{\star} \in\Kc$.
Let $\hat{\ub}^{k} := \Ab\xb^{k+1} + \Bb\hat{\yb}^k - \cb \in \R^n$. 
We can derive
\begin{equation*}
\begin{array}{ll}
\ell_k(\zb^{\star}) &:= \psi(\xb^{k+1}, \hat{\yb}^k) + \iprods{\nabla_x\psi(\xb^{k+1}, \hat{\yb}^k), \xb^{\star} - \xb^{k+1}} + \iprods{\nabla_y\psi(\xb^{k+1}, \hat{\yb}^k), \yb^{\star} -  \hat{\yb}^k} \vspace{1ex}\\
& \overset{\tiny\eqref{eq:psi_func_def}}{=}  \iprods{\hat{\ub}^{k} -  \proj_{\Kc}(\hat{\ub}^{k}), \Ab(\xb^{\star} - \xb^{k+ 1}) + \Bb(\yb^{\star} - \hat{\yb}^k)}  + \tfrac{1}{2}\norms{\hat{\ub}^{k} - \proj_{\Kc}(\hat{\ub}^{k})}^2 \vspace{1.25ex}\\
& = \iprods{\hat{\ub}^{k} -  \proj_{\Kc}(\hat{\ub}^{k}), \rb^{\star} - \proj_{\Kc}(\hat{\ub}^{k})}  -  \tfrac{1}{2}\norms{\hat{\ub}^{k}  -  \proj_{\Kc}(\hat{\ub}^{k})}^2 \vspace{1.25ex}\\
&\leq -\tfrac{1}{2}\norms{\hat{\ub}^{k} - \proj_{\Kc}(\hat{\ub}^{k})}^2,
\end{array}
\end{equation*}
which is the first inequality of \eqref{eq:lin_func_pro}.
Here, we use the property $\iprods{\hat{\ub}^{k} - \proj_{\Kc}(\hat{\ub}^{k}), \rb^{\star} - \proj_{\Kc}(\hat{\ub}^{k})} \leq 0$ for any $\rb^{\star}\in\Kc$ of the projection $\proj_{\Kc}$.
The second inequality of \eqref{eq:lin_func_pro} follows directly from \eqref{eq:lipschitz_xy} and the definition of $\psi$ in \eqref{eq:psi_func_def}.
The proof of \eqref{eq:psi_grad_pro} can be found in \cite{Nesterov2004} due to the Lipschitz continuity of $\nabla_y\psi(\xb^{k+1},\cdot)$.
\Eproof
\end{proof}

\beforesubsec
\subsection{Descent property of the alternating scheme  in Algorithm~\ref{alg:A1} and Algorithm \ref{alg:A2}.}\label{apdx:le:key_descent_lemma}
\aftersubsec

\begin{lemma}\label{le:key_descent_lemma}
Let $\ell_k$ and $\Qc_k$ be defined by \eqref{eq:lin_func}, and $\Phi_{\rho}$ be defined by \eqref{eq:Phi_func}.
\begin{itemize}
\item[$\mathrm{(a)}$] 
Let $\zb^{k+1} := (\xb^{k+1}, \yb^{k+1})$  be generated by Step~\ref{eq:alter_scheme} of Algorithm~\ref{alg:A1}.
Then, for any $\zb := (\xb, \yb) \in\dom{F}$, we have
\begin{equation}\label{eq:key_descent_lemma}
\begin{array}{ll}
\Phi_{\rho_k}(\zb^{k+1}) &\leq F(\zb) + \rho_k\ell_k(\zb) + \gamma_k\iprods{\xb^{k+1} - \hat{\xb}^k, \xb - \hat{\xb}^k}  - \gamma_k\norms{\xb^{k+1} - \hat{\xb}^k}^2\vspace{1ex}\\
& + \rho_k\norms{\Bb}^2\iprods{\yb^{k + 1}  -  \hat{\yb}^k, \yb -\hat{\yb}^k}  - \frac{\rho_k\norm{\Bb}^2}{2}\norms{\yb^{k+ 1} -  \hat{\yb}^k}^2.
\end{array}
\end{equation}
\item[$\mathrm{(b)}$] 
Alternatively, let $\zb^{k+1} := (\xb^{k+1}, \yb^{k+1})$  be generated by Step~\ref{step:x_cvx_subprob} of Algorithm~\ref{alg:A2}, and $\breve{\yb}^{k+1} := (1-\tau_k)\yb^k + \tau_k\tilde{\yb}^{k+1}$.
Then,  for any $\zb := (\xb, \yb) \in \dom{F}$, we have
\begin{equation}\label{eq:key_descent_lemma2}
\begin{array}{ll}
\breve{\Phi}_{k+1} &:= f(\xb^{k+1}) + g(\breve{\yb}^{k+1}) + \rho_k\Qc_k(\breve{\yb}^{k+1}) \vspace{1ex}\\
& \leq (1-\tau_k) \big[ F(\zb^{k}) + \rho_k\ell_k(\zb^k) \big] + \tau_k\big[ F(\zb) + \rho_k\ell_k(\zb) \big]\vspace{1ex}\\ 
& + \tfrac{\gamma_0\tau_k^2}{2}\norms{\tilde{\xb}^k - \xb}^2 -  \tfrac{\gamma_0\tau_k^2}{2}\norms{\tilde{\xb}^{k+1} - \xb}^2 \vspace{1ex}\\
&  + \tfrac{\rho_k\tau_k^2\norms{\Bb}^2}{2}\norms{\tilde{\yb}^k - \yb}^2 - \tfrac{\left(\rho_k\tau_k^2\norms{\Bb}^2 + \mu_g\tau_k\right)}{2}\norms{\tilde{\yb}^{k+1} - \yb}^2.
\end{array}
\end{equation}
\end{itemize}
\end{lemma}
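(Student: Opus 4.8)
The plan is to derive both estimates from the first-order optimality conditions of the subproblems solved in Step~\ref{eq:alter_scheme} of Algorithm~\ref{alg:A1} (resp. Step~\ref{step:x_cvx_subprob} of Algorithm~\ref{alg:A2}), combined with the subgradient inequality for $f$ and $g$ — and, for part~(b), the $\mu_g$-strong convexity of $g$ — together with the one-step descent bound \eqref{eq:psi_grad_pro} for the quadratic surrogate $\Qc_k$ of $\psi(\xb^{k+1},\cdot)$.

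\textbf{Part (a).} First I would write the optimality conditions: there exist $\xi^{k+1}\in\partial f(\xb^{k+1})$ and $\eta^{k+1}\in\partial g(\yb^{k+1})$ with $\xi^{k+1}+\rho_k\nabla_x\psi(\xb^{k+1},\hat{\yb}^k)+\gamma_k(\xb^{k+1}-\hat{\xb}^k)=0$ and $\eta^{k+1}+\rho_k\nabla_y\psi(\xb^{k+1},\hat{\yb}^k)+\rho_k\norms{\Bb}^2(\yb^{k+1}-\hat{\yb}^k)=0$, the latter because the $\yb$-update is exactly $\argmin_{\yb}\{g(\yb)+\rho_k\Qc_k(\yb)\}$; when $\gamma_k=0$ the same relation holds for any selection $\xb^{k+1}$ of \eqref{eq:unreg_subprob_x}, with the $\gamma_k$-term absent. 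Convexity of $f$ at $\xb$ and of $g$ at $\yb$ then yields $f(\xb^{k+1})\le f(\xb)+\rho_k\iprods{\nabla_x\psi(\xb^{k+1},\hat{\yb}^k),\xb-\xb^{k+1}}+\gamma_k\iprods{\xb^{k+1}-\hat{\xb}^k,\xb-\xb^{k+1}}$ and a similar bound for $g(\yb^{k+1})$ with $\rho_k\norms{\Bb}^2$. Adding these, adding $\rho_k\Qc_k(\yb^{k+1})$, and using $\Phi_{\rho_k}(\zb^{k+1})\le f(\xb^{k+1})+g(\yb^{k+1})+\rho_k\Qc_k(\yb^{k+1})$ from \eqref{eq:psi_grad_pro}, I get $\Phi_{\rho_k}(\zb^{k+1})\le F(\zb)+\rho_k\ell_k(\zb)$ plus inner-product remainders (the two $\nabla_y\psi$ terms combine into $\rho_k\iprods{\nabla_y\psi(\xb^{k+1},\hat{\yb}^k),\yb-\hat{\yb}^k}$, matching $\ell_k$ in \eqref{eq:lin_func}). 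Finally, using $\xb-\xb^{k+1}=(\xb-\hat{\xb}^k)-(\xb^{k+1}-\hat{\xb}^k)$ and $\yb-\yb^{k+1}=(\yb-\hat{\yb}^k)-(\yb^{k+1}-\hat{\yb}^k)$ rearranges the remainders into \eqref{eq:key_descent_lemma}; the coefficient $-\tfrac{\rho_k\norms{\Bb}^2}{2}$ comes from $\rho_k\norms{\Bb}^2\norms{\yb^{k+1}-\hat{\yb}^k}^2$ (from the $g$-inequality) and $\tfrac{\rho_k\norms{\Bb}^2}{2}\norms{\yb^{k+1}-\hat{\yb}^k}^2$ (from $\Qc_k$) appearing with opposite signs.

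\textbf{Part (b).} The key preliminary step is to note, using \eqref{eq:x_hat} and $\breve{\yb}^{k+1}:=(1-\tau_k)\yb^k+\tau_k\tilde{\yb}^{k+1}$, that also $\xb^{k+1}=(1-\tau_k)\xb^k+\tau_k\tilde{\xb}^{k+1}$, so both $\xb^{k+1}$ and $\breve{\yb}^{k+1}$ are the same convex combination of a ``$\zb^k$-point'' and a ``tilde-point''. Convexity of $f$ and $g$ then splits $f(\xb^{k+1})$ and $g(\breve{\yb}^{k+1})$ into $(1-\tau_k)[\cdot]_{\zb^k}+\tau_k[\cdot]$, up to bounding $f(\tilde{\xb}^{k+1})$ and $g(\tilde{\yb}^{k+1})$. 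For these I use the optimality of the $\xb$-update (with fixed $\gamma_0$) and of the $\tilde{\yb}$-update (a prox with stepsize $\tau_k\rho_k\norms{\Bb}^2$), together with the subgradient inequality for $f$ and the $\mu_g$-strong-convexity inequality for $g$; the three-point identity turns the cross terms $\iprods{\tilde{\xb}^{k+1}-\tilde{\xb}^k,\tilde{\xb}^{k+1}-\xb}$ and $\iprods{\tilde{\yb}^{k+1}-\tilde{\yb}^k,\tilde{\yb}^{k+1}-\yb}$ into telescoping $\norms{\cdot}^2$ differences, and the strong convexity produces exactly the extra $-\tfrac{\mu_g\tau_k}{2}\norms{\tilde{\yb}^{k+1}-\yb}^2$. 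I then expand $\rho_k\Qc_k(\breve{\yb}^{k+1})$ using that $\Qc_k$ is a quadratic with Hessian $\norms{\Bb}^2\Id$, i.e. $\Qc_k((1-\tau_k)\yb^k+\tau_k\tilde{\yb}^{k+1})=(1-\tau_k)\Qc_k(\yb^k)+\tau_k\Qc_k(\tilde{\yb}^{k+1})-\tfrac{\tau_k(1-\tau_k)\norms{\Bb}^2}{2}\norms{\yb^k-\tilde{\yb}^{k+1}}^2$; the linear parts of $(1-\tau_k)\rho_k\Qc_k(\yb^k)$ and $\tau_k\rho_k\Qc_k(\tilde{\yb}^{k+1})$ supply precisely the $\psi$- and $\nabla_y\psi$-contributions needed to assemble $(1-\tau_k)\rho_k\ell_k(\zb^k)+\tau_k\rho_k\ell_k(\zb)$, using $\xb-\xb^{k+1}$, $\xb^k-\xb^{k+1}$ decompositions against $\tilde{\xb}^{k+1}$ on the $\xb$-side.

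The main obstacle is the bookkeeping of the leftover quadratic terms in part~(b): after all substitutions, every $\norms{\cdot}^2$ term other than $\tfrac{\gamma_0\tau_k^2}{2}\norms{\tilde{\xb}^k-\xb}^2-\tfrac{\gamma_0\tau_k^2}{2}\norms{\tilde{\xb}^{k+1}-\xb}^2$ and $\tfrac{\rho_k\tau_k^2\norms{\Bb}^2}{2}\norms{\tilde{\yb}^k-\yb}^2-\tfrac{(\rho_k\tau_k^2\norms{\Bb}^2+\mu_g\tau_k)}{2}\norms{\tilde{\yb}^{k+1}-\yb}^2$ must be shown nonpositive. Writing $\hat{\yb}^k-\yb^k=\tau_k(\tilde{\yb}^k-\yb^k)$, $\tilde{\yb}^{k+1}-\hat{\yb}^k=(\tilde{\yb}^{k+1}-\tilde{\yb}^k)-(1-\tau_k)(\yb^k-\tilde{\yb}^k)$, and setting $d:=\tilde{\yb}^{k+1}-\tilde{\yb}^k$, $e:=\yb^k-\tilde{\yb}^k$, the residual $\yb$-terms reduce (after dividing by $\tfrac{\tau_k\rho_k\norms{\Bb}^2}{2}$) to $-\tau_k\norms{d}^2-(1-\tau_k)\norms{e-d}^2+\tau_k(1-\tau_k)\norms{e}^2+\norms{d-(1-\tau_k)e}^2$, which expands to identically $0$; the residual $\xb$-term is $-\tfrac{\gamma_0\tau_k^2}{2}\norms{\tilde{\xb}^{k+1}-\tilde{\xb}^k}^2\le0$ and is simply dropped. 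I would also point out that the argument remains valid when $\gamma_0=0$ (the $\xb$-term vanishes) and when $\mu_g=0$ (consistent with the convention $\mu_g\ge0$), which gives \eqref{eq:key_descent_lemma2}.
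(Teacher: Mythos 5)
Your proposal is correct and follows essentially the same route as the paper's proof: optimality conditions of the two subproblems plus convexity (and $\mu_g$-strong convexity) of $f,g$, the quadratic upper bound \eqref{eq:psi_grad_pro} on $\psi(\xb^{k+1},\cdot)$, the relation $\xb^{k+1}=(1-\tau_k)\xb^k+\tau_k\tilde{\xb}^{k+1}$, three-point identities, and dropping the nonpositive term $-\tfrac{\gamma_0}{2}\norms{\xb^{k+1}-\hat{\xb}^k}^2$. The only difference is cosmetic: in part (b) you expand $\Qc_k(\breve{\yb}^{k+1})$ by the convex-combination identity for quadratics and then cancel the leftovers (your residual indeed vanishes identically), whereas the paper groups the same algebra directly as $(1-\tau_k)\ell_k(\zb^k)+\tau_k\ell_k(\tilde{\zb}^{k+1})+\tfrac{\norms{\Bb}^2\tau_k^2}{2}\norms{\tilde{\yb}^{k+1}-\tilde{\yb}^k}^2$.
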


\begin{proof}
(a)~Combining the optimality condition of two subproblems at Step~\ref{eq:alter_scheme} of Algorithm~\ref{alg:A1}, and the convexity of $f$ and $g$, we can derive
\begin{equation}\label{eq:opt_cond_subprob_xy}
\left\{\begin{array}{lll}
f(\xb^{k+1})  &\leq f(\xb) + \iprods{\rho_k\nabla_x{\psi}(\xb^{k+1},\hat{\yb}^k) + \gamma_k(\xb^{k+1} - \hat{\xb}^k), \xb - \xb^{k+1}}, \vspace{1ex}\\
g(\yb^{k+1}) &\leq g(\yb) + \iprods{\rho_k\nabla_y{\psi}(\xb^{k+1},\hat{\yb}^k) + \rho_k\norms{\Bb}^2(\yb^{k+1} \!-\! \hat{\yb}^k), \yb - \yb^{k+1}}.
\end{array}\right.
\end{equation}
Using \eqref{eq:psi_grad_pro} with $\yb = \yb^{k+1}$, we have
\begin{equation*}
\psi(\xb^{k+1}, \yb^{k+1}) \leq \psi(\xb^{k+1}, \hat{\yb}^k) + \iprods{\nabla_y{\psi}(\xb^{k+1}, \hat{\yb}^k), \yb^{k+1} - \hat{\yb}^k} + \tfrac{\norms{\Bb}^2}{2}\norms{\yb^{k+1} - \hat{\yb}^k}^2.
\end{equation*}
Combining the last estimate and \eqref{eq:opt_cond_subprob_xy}, and then using \eqref{eq:Phi_func}, we can derive
\begin{equation*}
\begin{array}{ll}
\Phi_{\rho}(\zb^{k+1}) &\overset{\tiny \eqref{eq:Phi_func}}{=} f(\xb^{k+1}) + g(\yb^{k+1}) + \rho_k\psi(\xb^{k+1}, \yb^{k+1}) \vspace{1ex}\\
&\overset{\tiny\eqref{eq:opt_cond_subprob_xy}}{\leq} f(\xb) + g(\yb) + \rho_k\ell_k(\zb)  + \gamma_k\iprods{\hat{\xb}^k - \xb^{k+1}, \xb^{k+1} - \xb} \vspace{1ex}\\
& + \rho_k\norms{\Bb}^2\iprods{\hat{\yb}^k - \yb^{k+1}, \yb^{k+1} - \yb} +  \tfrac{\rho_k\norms{\Bb}^2}{2}\norms{\yb^{k+1}-\hat{\yb}^k}^2 \vspace{1ex}\\
&= f(\xb) + g(\yb) + \rho_k\ell_k(\zb)   + \gamma_k\iprods{\hat{\xb}^k - \xb^{k+1}, \hat{\xb}^k - \xb} \vspace{1ex}\\
& - \gamma_k\norms{\xb^{k+1} - \hat{\xb}^k}^2 + \rho_k\norms{\Bb}^2\iprods{\hat{\yb}^k - \yb^{k+1}, \hat{\yb}^k - \yb}  - \frac{\rho_k\norms{\Bb}^2}{2}\norms{\yb^{k+1}  - \hat{\yb}^k}^2,
\end{array}
\end{equation*}
which is exactly \eqref{eq:key_descent_lemma}.

(b)~First, from the definition of $\ell_k$ and $\Qc_k$ in  \eqref{eq:lin_func}, using $\breve{\yb}^{k+1} - \hat{\yb}^k = \tau_k(\tilde{\yb}^{k+1} - \tilde{\yb}^k)$ and $\xb^{k+1} - (1-\tau_k)\xb^k - \tau_k\tilde{\xb}^{k+1} = 0$, we can show that
\begin{align}\label{eq:proof2_est2}
\Qc_k(\breve{\yb}^{k\!+\!1}) &\overset{\tiny\eqref{eq:lin_func}}{=} (1-\tau_k)\ell_k(\zb^k) + \tau_k\ell_k(\tilde{\zb}^{k+1})   + \tfrac{\norms{\Bb}^2\tau_k^2}{2}\norms{\tilde{\yb}^{k+1} - \tilde{\yb}^k}^2.
\end{align}
By  the convexity of $f$, $\tau_k\tilde{\xb}^{k+1} = \xb^{k+1} - (1-\tau_k)\xb^k$ from \eqref{eq:x_hat}, and the optimality condition of the $x$-subproblem at Step~\ref{step:x_cvx_subprob} of Algorithm~\ref{alg:A2}, we can derive
\begin{equation}\label{eq:proof2_13a}
\begin{array}{ll}
f(\xb^{k+1}) &\leq (1 - \tau_k)f(\xb^k) +  \tau_k f(\xb)  + \tau_k \iprods{\nabla{f}(\xb^{k+1}), \tilde{\xb}^{k+1} - \xb} \vspace{1ex}\\
&= (1 - \tau_k)f(\xb^k) + \tau_k f(\xb)  + \rho_k\tau_k \iprods{\nabla_x{\psi}(\xb^{k+1},\hat{\yb}^k),\xb -  \tilde{\xb}^{k+1}}\vspace{1ex}\\
& + \gamma_0\tau_k\iprods{\xb^{k+1} - \hat{\xb}^k, \xb - \tilde{\xb}^{k+1}},
\end{array}
\end{equation}
for any $\xb\in\R^{p_1}$, where $\nabla{f}(\xb^{k\!+\!1})\in\partial{f}(\xb^{k\!+\!1})$.

By the $\mu_g$-strong convexity of $g$, $\breve{\yb}^{k+1} := (1-\tau_k)\yb^k + \tau_k\tilde{\yb}^{k+1}$, and the optimality condition of the $y$-subproblem at Step~\ref{step:x_cvx_subprob} of Algorithm~\ref{alg:A2}, one can also derive
\begin{equation}\label{eq:proof2_13b}
{\!\!\!\!}\begin{array}{ll}
g(\breve{\yb}^{k+1})  &\leq (1-\tau_k)g(\yb^k) + \tau_kg(\yb) + \tau_k\iprods{\nabla{g}(\tilde{\yb}^{k+1}), \tilde{\yb}^{k+1}{\!\!} - \yb} 
 - \frac{\tau_k\mu_g}{2}\norms{\tilde{\yb}^{k+1} {\!\!}- \yb}^2 \vspace{1ex}\\
 &= (1-\tau_k)g(\yb^k) + \tau_kg(\yb) + \rho_k\tau_k\iprods{\nabla_y{\psi}(\xb^{k+1},\hat{\yb}^k), \yb - \tilde{\yb}^{k+1}} \vspace{1ex}\\
 & + \rho_k\tau_k^2\norms{\Bb}^2\iprods{\tilde{\yb}^{k+1} - \tilde{\yb}^k, \yb - \tilde{\yb}^{k+1}} - \frac{\tau_k\mu_g}{2}\norms{\tilde{\yb}^{k+1} {\!\!}- \yb}^2,
 \end{array}{\!\!\!}
\end{equation}
for any $\yb\in\R^{p_2}$, where $\nabla{g}(\tilde{\yb}^{k+1}) \in \partial{g}(\tilde{\yb}^{k+1})$.

Combining this, \eqref{eq:proof2_est2}, \eqref{eq:proof2_13a} and \eqref{eq:proof2_13b} and then using $\breve{\Phi}_k$, we have
\begin{equation}\label{eq:proof2_est11}
{\!\!\!\!\!\!\!}\begin{array}{ll}
\breve{\Phi}_{k+1} & = f(\xb^{k+1}) + g(\breve{\yb}^{k+1}) + \rho_k\Qc_k(\breve{\yb}^{k+1}) \vspace{1ex}\\
&{\!\!} \overset{\tiny\eqref{eq:proof2_est2},\eqref{eq:proof2_13a},\eqref{eq:proof2_13b}}{\leq} (1-\tau_k)\big[ F(\zb^{k}) + \rho_k\ell_k(\zb^k) \big] + \tau_k\big[F(\zb) + \rho_k\ell_k(\zb) \big]\vspace{1ex}\\
&+ \gamma_0\tau_k\iprods{\xb^{k+1} - \hat{\xb}^k, \xb - \tilde{\xb}^{k+1}} + \rho_k\tau_k^2\norms{\Bb}^2\iprods{\tilde{\yb}^{k+1} - \tilde{\yb}^k, \yb - \tilde{\yb}^{k+1}} \vspace{1ex}\\
&{\!\!}  + \frac{1}{2}\rho_k\tau_k^2\norms{\Bb}^2\norms{\tilde{\yb}^{k+1} - \tilde{\yb}^k}^2 - \frac{\tau_k\mu_g}{2}\norms{\tilde{\yb}^{k+1} - \yb}^2.
\end{array}{\!\!\!\!\!\!\!}
\end{equation}
Next, using  \eqref{eq:x_hat}, for any $\zb = (\xb, \yb)\in\dom{F}$, we also have
\begin{equation}\label{eq:elem_estimate}
\begin{array}{ll}
2\tau_k\iprods{\hat{\xb}^k - \xb^{k+1}{\!\!}, \tilde{\xb}^{k} - \xb}&= \tau_k^2\norms{\tilde{\xb}^k - \xb}^2  -  \tau_k^2\norms{\tilde{\xb}^{k+1} - \xb}^2 + \norms{\xb^{k+1} - \hat{\xb}^k}^2, \vspace{1ex}\\
2\iprods{\tilde{\yb}^k - \tilde{\yb}^{k+1}, \tilde{\yb}^{k+1} - \yb} &=   \norms{\tilde{\yb}^k - \yb}^2  - \norms{\tilde{\yb}^{k+1} - \yb}^2 - \norms{\tilde{\yb}^{k+1} - \tilde{\yb}^k}^2.
\end{array}
\end{equation}
Substituting \eqref{eq:elem_estimate} into \eqref{eq:proof2_est11} we obtain
\begin{equation*} 
\begin{array}{ll}
\breve{\Phi}_{k+1} & \leq (1-\tau_k) \big[ F(\zb^{k}) + \rho_k\ell_k(\zb^k) \big] + \tau_k\big[ F(\zb) + \rho_k\ell_k(\zb) \big] \vspace{1ex}\\ 
& + \tfrac{\gamma_0\tau_k^2}{2}\norms{\tilde{\xb}^k - \xb}^2 -  \tfrac{\gamma_0\tau_k^2}{2}\norms{\tilde{\xb}^{k+1} - \xb}^2 - \tfrac{\gamma_0}{2} \norms{\xb^{k+1} - \hat{\xb}^k}^2 \vspace{1ex}\\
& + \tfrac{\rho_k\tau_k^2\norms{\Bb}^2}{2}\norms{\tilde{\yb}^k - \yb}^2 - \tfrac{\left(\rho_k\tau_k^2\norms{\Bb}^2 + \mu_g\tau_k \right)}{2}\norms{\tilde{\yb}^{k+1} - \yb}^2,
\end{array}
\end{equation*}
which is exactly \eqref{eq:key_descent_lemma2} by neglecting the term $-\frac{\gamma_0}{2}\norms{\xb^{k+1} - \hat{\xb}^k}^2$.
\Eproof
\end{proof}

\beforesubsec
\subsection{The proof of Lemma~\ref{le:key_estimate}: The key estimate of Algorithm~\ref{alg:A1}}\label{apdx:le:key_estimate}
\aftersubsec
Using the fact that $\tau_k = \frac{1}{k+1}$, we have $\frac{\tau_{k+1}(1-\tau_k)}{\tau_k} = \frac{k}{k+2}$.
Hence, the third line of Step \ref{eq:alter_scheme} of Algorithm~\ref{alg:A1} can be written as 
\begin{equation*}
(\hat{\xb}^{k\!+\!1}, \hat{\yb}^{k\!+\!1} ) =  (\xb^{k\!+\!1}, \yb^{k\!+\!1}) + \tfrac{\tau_{k+1}(1-\tau_k)}{\tau_k}(\xb^{k+1} - \xb^k, \yb^{k+1} - \yb^k).
\end{equation*} 
This step can be split into two steps with $(\hat{\xb}^k, \hat{\yb}^k)$ and $(\tilde{\xb}^k, \tilde{\yb}^k)$ as in \eqref{eq:update_xtilde}, which is standard in accelerated gradient methods \cite{Beck2009,Nesterov2004}. 
We omit the detailed derivation.

Next, we prove \eqref{eq:key_est1}. 
Using \eqref{eq:lin_func_pro}, we have
\begin{equation}\label{eq:lm31_est3}
\ell_k(\zb^k) \leq \psi(\xb^k, \yb^k) - \tfrac{1}{2}\norms{\sb^k  - \hat{\sb}^{k+1}}^2, ~~\text{and}~~\ell_k(\zb^{\star})  \leq -\tfrac{1}{2}\norms{\hat{\sb}^{k+1}}^2.
\end{equation}
Using \eqref{eq:key_descent_lemma} with $(\xb, \yb) = (\xb^k, \yb^k)$ and $(\xb, \yb) = (\xb^{\star}, \yb^{\star})$ respectively, we obtain
\begin{equation*}
\begin{array}{ll}
\Phi_{\rho_k}(\zb^{k+1}) &\overset{\tiny\eqref{eq:lm31_est3}}{\leq} \Phi_{\rho_k}(\zb^k)  + \gamma_k\iprods{\hat{\xb}^k - \xb^{k+1}, \hat{\xb}^{k} - \xb^k}  - \gamma_k\norms{\hat{\xb}^k - \xb^{k+1}}^2 \vspace{1ex}\\
& + \rho_k\norms{\Bb}^2\iprods{\hat{\yb}^k - \yb^{k+1}, \hat{\yb}^{k} - \yb^k} - \frac{\rho_k\norms{\Bb}^2}{2}\norms{\hat{\yb}^k - \yb^{k+1}}^2 - \frac{\rho_k}{2}\norms{\sb^k - \hat{\sb}^{k+1}}^2. \vspace{1ex}\\
\Phi_{\rho_k}(\zb^{k+1}) &\leq F(\zb^{\star})  - \frac{\rho_k}{2}\norms{\hat{\sb}^{k+1}}^2 + \gamma_k\iprods{\hat{\xb}^k - \xb^{k+1}, \hat{\xb}^{k} - \xb^{\star}}  - \gamma_k\norms{\hat{\xb}^k - \xb^{k+1}}^2 \vspace{1ex}\\
&+ \rho_k\norms{\Bb}^2\iprods{\hat{\yb}^k - \yb^{k+1}, \hat{\yb}^{k} - \yb^{\star}}  - \frac{\rho_k\norms{\Bb}^2}{2}\norms{\hat{\yb}^k - \yb^{k+1}}^2.
\end{array}
\end{equation*}
Multiplying the first inequality  by $1 - \tau_k \in [0, 1]$ and the second one by $\tau_k \in [0, 1]$, and summing up the results, then using  $\hat{\xb}^k - (1-\tau_k)\xb^k = \tau_k\tilde{\xb}^k$ and $\hat{\yb}^k - (1-\tau_k)\yb^k = \tau_k\tilde{\yb}^k$ from  \eqref{eq:update_xtilde}, we obtain
\begin{equation}\label{eq:lm31_est5}
{\!\!\!\!\!\!}\begin{array}{ll}
\Phi_{\rho_k}(\zb^{k+1}) &{\!\!}\leq  (1-\tau_k)\Phi_{\rho_k}(\zb^k)  + \tau_kF(\zb^{\star}) + \gamma_k\tau_k\iprods{\hat{\xb}^k - \xb^{k+1}, \tilde{\xb}^{k} - \xb^{\star}} \vspace{1ex}\\
& - \gamma_k\norms{\xb^{k\!+\!1} \!-\! \hat{\xb}^k}^2  \!+\! \rho_k\tau_k\norms{\Bb}^2\iprods{\hat{\yb}^k \!-\! \yb^{k\!+\!1}, \tilde{\yb}^{k} \!-\! \yb^{\star}} \!-\! \frac{\rho_k\norms{\Bb}^2}{2}\norms{\yb^{k\!+\!1}  \!-\! \hat{\yb}^k}^2 \vspace{1ex}\\
&  -~ \frac{(1-\tau_k)\rho_k}{2}\norms{s^k - \hat{s}^{k+1}}^2 - \frac{\tau_k\rho_k}{2}\norms{\hat{s}^{k+1}}^2.
\end{array}{\!\!\!\!\!\!\!\!}
\end{equation}
By the update rule in \eqref{eq:update_xtilde} we can show that
\begin{equation*}
\begin{array}{ll}
2\tau_k\iprods{\hat{\xb}^k - \xb^{k+1}, \tilde{\xb}^{k} - \xb^{\star}}  &= \tau_k^2\norms{\tilde{\xb}^k - \xb^{\star}}^2 -  \tau_k^2\norms{\tilde{\xb}^{k+1} - \xb^{\star}}^2 + \norms{\xb^{k+1} - \hat{\xb}^k}^2, \vspace{1ex}\\
2\tau_k\iprods{\hat{\yb}^k - \yb^{k+1}, \tilde{\yb}^k - \yb^{\star}} &= \tau_k^2\norms{\tilde{\yb}^k - \yb^{\star}}^2 -  \tau_k^2\norms{\tilde{\yb}^{k+1} - \yb^{\star}}^2 +  \norms{\yb^{k+1} - \hat{\yb}^k}^2.
\end{array}
\end{equation*}
Using this relation and $\Phi_{\rho_{k}}(\zb^k) = \Phi_{\rho_{k-1}}(\zb^k) + \frac{(\rho_k - \rho_{k-1})}{2}\norms{\sb^k}^2$ into \eqref{eq:lm31_est5}, we get
\begin{equation}\label{eq:lm31_est7} 
{\!\!\!\!\!\!}\begin{array}{ll}
\Phi_{\rho_k}(\zb^{k+1}) &{\!\!} \leq  (1-\tau_k)\Phi_{\rho_{k-1}}(\zb^k)  + \tau_kF(\zb^{\star}) + \gamma_k\tau_k^2\left[\norms{\tilde{\xb}^k - \xb^{\star}}^2 -  \norms{\tilde{\xb}^{k+1} - \xb^{\star}}^2\right]\vspace{1ex}\\
&{\!\!\!\!} - \tfrac{\gamma_k}{2}\norms{\hat{x}^k - \xb^{k+1}}^2 + \frac{\norms{\Bb}^2\rho_k\tau_k^2}{2}\left[\norms{\tilde{\yb}^k - \yb^{\star}}^2 - \norms{\tilde{\yb}^{k+1} - \yb^{\star}}^2\right] - R_k,
\end{array}{\!\!\!\!}
\end{equation}
where $R_k$ is defined as
\begin{equation}\label{eq:Rk_est}
\begin{array}{ll}
R_k &:= \frac{(1-\tau_k)\rho_k}{2}\norms{\sb^k - \hat{\sb}^{k+1}}^2 + \frac{\rho_k\tau_k}{2}\norms{\hat{\sb}^{k+1}}^2 - \frac{(1-\tau_k)(\rho_k - \rho_{k-1})}{2}\norms{\sb^k}^2 \vspace{1ex}\\
& \geq \tfrac{(1-\tau_k)}{2}\left[\rho_{k-1} - \rho_k(1-\tau_k)\right]\norms{\sb^k}^2.
\end{array}
\end{equation}
Using \eqref{eq:Rk_est} into \eqref{eq:lm31_est7} and ignoring $-\frac{\gamma_k}{2}\norms{\xb^{k+1} - \hat{\xb}^k}^2$, we obtain \eqref{eq:key_est1}.
\Eproof

\beforesubsec
\subsection{The proof of Lemma~\ref{le:tseng_key_est}: The key estimate of Algorithm~\ref{alg:A2}}\label{apdx:le:tseng_key_est}
\aftersubsec
The proof of \eqref{eq:x_hat} is similar to the proof of \eqref{eq:update_xtilde}, and we skip its details here.

Using $\zb = \zb^{\star}$ and \eqref{eq:lin_func_pro} into \eqref{eq:key_descent_lemma2}, we obtain
\begin{equation}\label{eq:key_descent_lemma2_star}
\begin{array}{ll}
\breve{\Phi}_{k+1} &:= f(\xb^{k+1}) + g(\breve{\yb}^{k+1}) + \rho_k\Qc_k(\breve{\yb}^{k+1}) \vspace{1ex}\\
& \overset{\tiny\eqref{eq:lin_func_pro}}{\leq} (1-\tau_k) \big[ F(\zb^{k}) + \rho_k\ell_k(\zb^k) \big] + \tau_kF(\zb^{\star})  - \frac{\rho_k\tau_k}{2}\norms{\hat{\sb}^{k+1}}^2 \vspace{1ex}\\ 
& +~ \tfrac{\gamma_0\tau_k^2}{2}\norms{\tilde{\xb}^k - \xb^{\star}}^2 -  \tfrac{\gamma_0\tau_k^2}{2}\norms{\tilde{\xb}^{k+1} - \xb^{\star}}^2 \vspace{1ex}\\
&  +~ \tfrac{\rho_k\tau_k^2\norms{\Bb}^2}{2}\norms{\tilde{\yb}^k - \yb^{\star}}^2 - \tfrac{\left(\rho_k\tau_k^2\norms{\Bb}^2 + \mu_g\tau_k\right)}{2}\norms{\tilde{\yb}^{k+1} - \yb^{\star}}^2.
\end{array}
\end{equation}
From the definition of $\psi$ in \eqref{eq:psi_func_def} and \eqref{eq:lin_func_pro}, we have
\begin{equation*}
\begin{array}{ll}
\Phi_{\rho_k}(\zb^k) = \Phi_{\rho_{k-1}}(\zb^k) + \frac{(\rho_k - \rho_{k-1})}{2}\norms{\sb^k}^2~~~\text{and}~~~\ell_k(\zb^k) \leq \psi(\xb^k, \yb^k) - \tfrac{1}{2}\norms{ \sb^k - \hat{\sb}^{k+1}}^2.
\end{array}
\end{equation*}
Using these expressions into \eqref{eq:key_descent_lemma2_star}, we obtain
\begin{equation}\label{eq:proof2_est11d}
{\!\!\!\!\!\!}\begin{array}{ll}
\breve{\Phi}_{k+1} &\leq (1-\tau_k)\Phi_{\rho_{k-1}}(\zb^k) + \tau_kF(\zb^{\star}) + \tfrac{\gamma_0\tau_k^2}{2}\norms{\tilde{\xb}^k - \xb^{\star}}^2  -  \tfrac{\gamma_0\tau_k^2}{2}\norms{\tilde{\xb}^{k+1} - \xb^{\star}}^2 \vspace{1ex}\\
&  + \tfrac{\norms{\Bb}^2\rho_k\tau_k^2}{2}\norms{\tilde{\yb}^k - \yb^{\star}}^2 - \tfrac{(\norms{\Bb}^2\rho_k\tau_k^2 + \mu_g\tau_k)}{2}\norms{\tilde{\yb}^{k+1} - \yb^{\star}}^2  - R_k,
\end{array}{\!\!\!\!\!\!\!\!}
\end{equation}
where $R_k$ is defined as \eqref{eq:Rk_est}.

Let us consider two cases:
\begin{itemize}
\item For \textbf{Option 1} at Step~\ref{step:A2_step5} of Algorithm~\ref{alg:A2}, we have $\yb^{k+1} = \breve{\yb}^{k+1}$.
Hence, using \eqref{eq:psi_grad_pro}, we get
\begin{equation}\label{eq:lower_bound_on_phik_a}
\Phi_{\rho_k}(\zb^{k+1}) =  f(\xb^{k+1}) + g(\yb^{k+1}) + \rho_k\psi(\xb^{k+1},\yb^{k+1}) \leq \breve{\Phi}_{k+1}.
\end{equation}
\item For \textbf{Option 2} at Step~\ref{step:A2_step5} of Algorithm~\ref{alg:A2}, we have
\begin{equation}\label{eq:lower_bound_on_phik_b}
\begin{array}{ll}
\Phi_{\rho_k}(\zb^{k+1}) &\leq f(\xb^{k+1}) + g(\yb^{k+1}) + \rho_k\Qc_k(\yb^{k+1}) \vspace{1ex}\\
& = f(\xb^{k+1}) + \displaystyle\min_{\yb\in\R^{p_2}}\Big\{ g(\yb) + \rho_k\Qc_k(\yb) \Big\} \vspace{1ex}\\
&\leq f(\xb^{k+1}) + g(\breve{\yb}^{k+1}) + \rho_k\Qc_k(\breve{\yb}^{k+1}) = \breve{\Phi}_{k+1}.
\end{array}
\end{equation}
\end{itemize}
Using either \eqref{eq:lower_bound_on_phik_a} or \eqref{eq:lower_bound_on_phik_b} into \eqref{eq:proof2_est11d}, we obtain
\begin{equation*} 
{\!\!\!\!\!\!}\begin{array}{ll}
\Phi_{\rho_k}(\zb^{k+1}) &\leq (1-\tau_k)\Phi_{\rho_{k-1}}(\zb^k) + \tau_kF(\zb^{\star}) + \tfrac{\gamma_0\tau_k^2}{2}\norms{\tilde{\xb}^k - \xb^{\star}}^2  -  \tfrac{\gamma_0\tau_k^2}{2}\norms{\tilde{\xb}^{k+1} - \xb^{\star}}^2 \vspace{1ex}\\
&  + \tfrac{\norms{\Bb}^2\rho_k\tau_k^2}{2}\norms{\tilde{\yb}^k - \yb^{\star}}^2 - \tfrac{(\norms{\Bb}^2\rho_k\tau_k^2 + \mu_g\tau_k)}{2}\norms{\tilde{\yb}^{k+1} - \yb^{\star}}^2 - R_k,
\end{array}{\!\!\!\!\!\!}
\end{equation*}
Using the lower bound \eqref{eq:Rk_est} of $R_k$ into this inequality, we  obtain \eqref{eq:key_est3}.
\Eproof
\beforesubsec
\subsection{The proof of Corollary \ref{co:com_cvx_convergence}: Application to composite convex minimization}\label{apdx:co:com_cvx_convergence}
\aftersubsec
By the $L_f$-Lipschitz continuity of $f$ and Lemma~\ref{le:approx_opt_cond}, we have 
\begin{equation}\label{eq:co1_est1}
{\!\!\!\!\!\!}\begin{array}{ll}
0 \leq P(\yb^k) - P^{\star} &= f(\yb^k) + g(\yb^k) - P^{\star} \leq f(\xb^k) + g(\yb^k) + \vert f(\yb^k) - f(\xb^k)\vert - P^{\star} \vspace{1ex}\\
&\leq f(\xb^k) + g(\yb^k) - P^{\star} + L_f\norms{\xb^k - \yb^k} \vspace{1ex}\\
&\overset{\tiny\eqref{eq:approx_opt_cond}}{\leq} S_{\rho_{k-1}}(\zb^k) - \frac{\rho_{k-1}}{2}\norms{\xb^k - \yb^k}^2 + L_f\norms{\xb^k - \yb^k},
\end{array}{\!\!\!\!\!\!}
\end{equation}
where $S_{\rho}(\zb) := \Phi_{\rho}(\zb) - P^{\star}$.
This inequality also leads to 
\begin{equation}\label{eq:co1_est2}
\begin{array}{ll}
\norms{\xb^k - \yb^k} &\leq \frac{1}{\rho_{k-1}}\left(L_f + \sqrt{L_f^2 + 2\rho_{k-1}S_{\rho_{k-1}}(\zb^k)}\right) \vspace{1ex}\\
& \leq \frac{1}{\rho_{k-1}}\left(2L_f + \sqrt{2\rho_{k-1}S_{\rho_{k-1}}(\zb^k)}\right).
\end{array}
\end{equation}
Since using \eqref{eq:DR_method2} is equivalent to applying Algorithm~\ref{alg:A1} to its constrained reformulation, by \eqref{eq:key_est10}, we have 
\begin{equation*}
S_{\rho_{k-1}}(\zb^k) \leq \frac{\rho_0\norms{\yb^0 - \yb^{\star}}^2}{2k}~~~\text{and}~~~\rho_{k-1} = \rho_0k.
\end{equation*}
Using these expressions into \eqref{eq:co1_est2} we get 
\begin{equation*}
\norms{\xb^k - \yb^k} \leq \frac{1}{\rho_0k}\left(2L_f + \sqrt{\rho_0^2\norms{\yb^0 - \yb^{\star}}^2}\right) = \frac{2L_f + \rho_0\norms{\yb^0-\yb^{\star}}}{\rho_0k}.
\end{equation*} 
Substituting this into \eqref{eq:co1_est1} and using the bound of $S_{\rho_k}$, we obtain \eqref{eq:co1_conv_rate1}.

Now, if we use \eqref{eq:DR_method3}, then it is equivalent to applying Algorithm~\ref{alg:A2} with \textbf{Option 1} to solve its constrained reformulation.
In this case, from the proof of Theorem~\ref{th:convergence2}, we can derive
\begin{equation*}
S_{\rho_{k-1}}(\zb^k) \leq \frac{2\rho_0\norms{\yb^0 - \yb^{\star}}^2}{(k+1)^2}~~~~\text{and}~~~\frac{\rho_0(k+1)^2}{4} \leq \rho_{k-1} \leq k^2\rho_0.
\end{equation*}
Combining these estimates and \eqref{eq:co1_est2}, we have $\norms{\xb^k - \yb^k} \leq \tfrac{8(L_f +  \rho_0\norms{\yb^0 - \yb^{\star}})}{\rho_0(k+1)^2}$.
Substituting this into \eqref{eq:co1_est1} and using the bound of $S_{\rho_{k-1}}$ we obtain \eqref{eq:co1_conv_rate2}.
\Eproof

\beforesubsec
\subsection{The proof of Theorem~\ref{co:three_objs}: Extension to the sum of three objective functions}\label{apdx:co:three_objs}
\aftersubsec
Using the Lipschitz gradient continuity of $h$ and \cite[Theorem 2.1.5]{Nesterov2004}, we have
\begin{equation*} 
{\!\!\!\!}\begin{array}{ll}
h(\yb^{k+1}) &\leq h(\hat{\yb}^k) + \iprods{\nabla{h}(\hat{\yb}^k), \yb^{k+1} - \hat{\yb}^k} + \tfrac{L_h}{2}\norms{\yb^{k+1} - \hat{\yb}^k}^2 \vspace{1ex}\\
&\leq h(\hat{\yb}^k) + \iprods{\nabla{h}(\hat{\yb}^k), \yb - \hat{\yb}^k} + \iprods{\nabla{h}(\hat{\yb}^k), \yb^{k+1} - \yb} + \tfrac{L_h}{2}\norms{\yb^{k+1} - \hat{\yb}^k}^2.
\end{array}{\!\!\!\!}
\end{equation*}
In addition, the optimality condition of \eqref{eq:y_prob2} is
\begin{equation*} 
0 = \nabla{g}(\yb^{k+1}) + \nabla{h}(\hat{\yb}^k) + \rho_k\nabla_y{\psi}(\xb^{k+1},\hat{\yb}^k) + \hat{\beta}_k(\yb^{k+1} - \hat{\yb}^k), ~\nabla{g}(\yb^{k+1})\in \partial{g}(\yb^{k+1}).
\end{equation*}
Using these expressions and the same argument as the proof of Lemma~\ref{le:key_descent_lemma}, we derive
\begin{equation}\label{eq:lm31_est2_b}
\begin{array}{ll}
\Phi_{\rho_k}(\zb^{k+1}) &{\!\!}\leq f(\xb) + g(\yb)  + h(\hat{\yb}^k) + \iprods{\nabla{h}(\hat{\yb}^k, \yb - \hat{\yb}^k} + \rho_k\ell_k(\zb)  \vspace{1ex}\\
& + \gamma_k\iprods{\hat{\xb}^k - \xb^{k+1}, \xb^{k+1} - \xb} + \hat{\beta}_k\iprods{\hat{\yb}^k - \yb^{k+1}, \yb^{k+1} - \yb} \vspace{1ex}\\
& + \tfrac{\rho_k\norms{\Bb}^2 + L_h}{2}\norms{\yb^{k+1} -\hat{\yb}^k}^2 . 
\end{array}
\end{equation}
Finally, with the same proof as in \eqref{eq:key_est1}, and $\hat{\beta}_k = \norms{\Bb}^2\rho_k + L_h$, we can show that
\begin{align}\label{eq:key_est1_b}
{\!\!\!\!\!}\Phi_{\rho_k}(\zb^{k+1}) & \leq  (1-\tau_k)\Phi_{\rho_{k-1}}(\zb^k)  + \tau_kF(\zb^{\star}) + \tfrac{\gamma_k\tau_k^2}{2}\norms{\tilde{\xb}^k - \xb^{\star}}^2 \nonumber\\
& -  \tfrac{\gamma_k\tau_k^2}{2}\norms{\tilde{\xb}^{k+1} - \xb^{\star}}^2 + \tfrac{\hat{\beta}_k\tau_k^2}{2}\norms{\tilde{\yb}^k - \yb^{\star}}^2 - \tfrac{\hat{\beta}_k\tau_k^2}{2}\norms{\tilde{\yb}^{k+1} - \yb^{\star}}^2 \nonumber\\
& - \tfrac{(1-\tau_k)}{2}\left[\rho_{k-1} - \rho_k(1-\tau_k)\right]\norms{\sb^k}^2,
\end{align}
where $\sb^k := \Ab\xb^k + \Bb\yb^k - \cb - \proj_{\Kc}\big(\Ab\xb^k + \Bb\yb^k - \cb\big)$.
In order to telescope, we impose conditions on $\rho_k$ and $\tau_k$ as
\begin{equation*}
\frac{(\rho_k\norms{\Bb}^2 + L_h)\tau_k^2}{1 - \tau_k} \leq (\rho_{k-1}\norms{\Bb}^2 + L_h)\tau_{k-1}^2 ~~\text{and}~~\rho_k = \frac{\rho_{k-1}}{1-\tau_k}.
\end{equation*}
If we choose $\tau_{k} = \frac{1}{k+1}$, then $\rho_k = \rho_0(k+1)$.
The first condition above becomes
\begin{equation*}
\begin{array}{ll}
&\frac{\rho_0\norms{\Bb}^2(k+1) + L_h}{k(k+1)} \leq \frac{\rho_0\norms{\Bb}^2k + L_h}{k^2}\vspace{1ex}\\
\Leftrightarrow &\rho_0\norms{\Bb}^2 k(k+1) + L_hk \leq \rho_0\norms{\Bb}^2k(k+1) + L_h(k+1).
\end{array}
\end{equation*}
which certainly holds.

The remaining proof of the first part in Corollary~\ref{co:three_objs} is similar to the proof of Theorem~\ref{th:convergence1}, but with $R_p^2 :=  \gamma_0 \norms{\xb^0 - \xb^{\star}}^2 + (L_h + \rho_0\norms{\Bb}^2)\norms{\yb^0 - \yb^{\star}}^2$ due to \eqref{eq:key_est1_b}.

We now prove the second part of  Corollary~\ref{co:three_objs}.
For the case (i) with $\mu_g > 0$, the proof is very similar to the proof of Theorem~\ref{th:convergence2}, but $\rho_k\norms{\Bb}^2$ is changed to $\hat{\beta}_k$ and is updated as $\hat{\beta}_k = \rho_k\norm{\Bb}^2 + L_h$.
We omit the detail of this analysis here.
We only prove the second case (ii) when $L_h < 2\mu_h$.

Using the convexity and the Lipschitz gradient continuity of $h$, we can derive
\begin{equation*}
\begin{array}{ll}
h(\breve{\yb}^{k+1}) &\leq (1-\tau_k)h(\yb^k) + \tau_kh(\tilde{\yb}^{k+1}) - \frac{\mu_h\tau_k(1-\tau_k)}{2}\norms{\tilde{\yb}^{k+1} - \yb^k}^2 \vspace{1ex}\\
&\leq (1-\tau_k)h(\yb^k) + \tau_kh(\tilde{\yb}^{k}) + \tau_k\iprods{\nabla{h}(\tilde{\yb}^k), \tilde{\yb}^{k+1} - \tilde{\yb}^k} + \frac{\tau_kL_h}{2}\norms{\tilde{\yb}^{k+1} - \tilde{\yb}^k}^2 \vspace{1ex}\\
&\leq (1-\tau_k)h(\yb^k) + \tau_kh(\yb^{\star}) + \tau_k\iprods{\nabla{h}(\tilde{\yb}^k), \tilde{\yb}^{k+1} -\yb^{\star}} \vspace{1ex}\\
& + \frac{\tau_kL_h}{2}\norms{\tilde{\yb}^{k+1} - \tilde{\yb}^k}^2 - \frac{\tau_k\mu_h}{2}\norms{\tilde{\yb}^k - \yb^{\star}}^2.
\end{array}
\end{equation*}
Using this estimate, with a similar proof as of \eqref{eq:proof2_est11}, we can derive
\begin{equation*}
\begin{array}{ll}
\breve{\Phi}_{k+1} & := f(\xb^{k+1}) + g(\breve{\yb}^{k+1}) + h(\breve{\yb}^{k+1}) + \rho_k\Qc_k(\breve{\yb}^{k+1}) \vspace{1ex}\\
&\overset{\tiny\eqref{eq:proof2_est2},\eqref{eq:proof2_13a},\eqref{eq:proof2_13b}}{\leq} (1-\tau_k)\big[ F(\zb^{k}) + \rho_k\ell_k(\zb^k) \big] + \tau_k\big[ F(\zb^{\star}) + \rho_k\ell_k(\tilde{z}^{k+1})\big] \vspace{1ex}\\
& +~ \tau_k\iprods{\nabla{f}(\xb^{k+1}), \tilde{\xb}^{k+1} - \xb^{\star}}  + \tau_k\iprods{\nabla{g}(\tilde{\yb}^{k+1}) + \nabla{h}(\tilde{\yb}^k), \tilde{\yb}^{k+1} {\!\!}- \yb^{\star}}   \vspace{1ex}\\
& +~\frac{\left(\rho_k\tau_k^2\norms{\Bb}^2 + \tau_kL_h \right)}{2}\norms{\tilde{\yb}^{k+1} - \tilde{\yb}^k}^2   -  \frac{\tau_k\mu_g}{2}\norms{\tilde{\yb}^{k+1} -  \yb^{\star}}^2 -   \frac{\tau_k\mu_h}{2}\norms{\tilde{\yb}^k - \yb^{\star}}^2\vspace{1ex}\\
& \leq (1-\tau_k)\big[ F(\zb^{k}) + \rho_k\ell_k(\zb^k) \big] + \tau_k F(\zb^{\star}) -\tfrac{\rho_k\tau_k}{2}\norms{\hat{\sb}^{k+1}}^2 \vspace{1ex}\\
& +~ \gamma_0\tau_k\iprods{\xb^{k+1} - \hat{\xb}^k,\xb^{\star} -  \tilde{\xb}^{k+1}}  + \tau_k^2\hat{\beta}_k\iprods{\tilde{\yb}^{k+1} - \tilde{\yb}^k, \yb^{\star} - \tilde{\yb}^{k+1}}  \vspace{1ex}\\
& +~ \frac{\left(\rho_k\tau_k^2\norms{\Bb}^2 +  \tau_kL_h\right)}{2}\norms{\tilde{\yb}^{k+1} - \tilde{\yb}^k}^2  - \frac{\tau_k\mu_g}{2}\norms{\tilde{\yb}^{k+ 1} -  \yb^{\star}}^2 - \frac{\tau_k\mu_h}{2}\norms{\tilde{\yb}^k - \yb^{\star}}^2.
\end{array}
\end{equation*}
Here, we use the optimality condition of \eqref{eq:x_cvx_subprob} and \eqref{eq:y_prob2b} into the last inequality, and $\nabla{f}$, $\nabla{g}$, and $\nabla{h}$ are subgradients of $f$, $g$, and $h$, respectively.

Using the same argument as the proof of \eqref{eq:key_est3}, if we denote $S_k  := \Phi_{\rho_{k-1}}(\zb^k) - F^{\star}$, then the last inequality above together with \eqref{eq:choice2} leads to
\begin{align}\label{eq:co2_key_est3} 
S_{k+1} & +  \tfrac{\gamma_0\tau_k^2}{2}\norms{\tilde{\xb}^{k+1} - \xb^{\star}}^2  + \tfrac{\hat{\beta}_k\tau_k^2 + \mu_g\tau_k}{2}\norms{\tilde{\yb}^{k+1} - \yb^{\star}}^2  \leq (1 - \tau_k)S_k   + \tfrac{\gamma_0\tau_k^2}{2}\norms{\tilde{\xb}^k {\!} - \xb^{\star}}^2 \nonumber\\
& + \tfrac{\hat{\beta}_k\tau_k^2 \!-\! \tau_k\mu_h}{2}\norms{\tilde{\yb}^k \!-\! \yb^{\star}}^2  -  \tfrac{(\hat{\beta}_k\tau_k^2 - \rho_k\tau_k^2\norms{\Bb}^2 - \tau_kL_h)}{2}\norms{\tilde{\yb}^{k+1}  - \tilde{\yb}^{k}}^2 \nonumber\\
&   - \frac{(1-\tau_k)}{2}\left[\rho_{k-1} - \rho_k(1-\tau_k)\right]\norms{\sb^k}^2,
\end{align}
where $\sb^k := \Ab\xb^k + \Bb\yb^k - \cb - \proj_{\Kc}\big(\Ab\xb^k + \Bb\yb^k - \cb\big)$. 
We still choose the update rule for $\tau_k$, $\rho_k$ and $\gamma_k$ as in Algorithm~\ref{alg:A2}.
Then, in order to telescope this inequality, we impose the following conditions:
\begin{equation*}
\hat{\beta}_k = \rho_k\norms{\Bb}^2 + \tfrac{L_h}{\tau_k}, ~~~\text{and}~~~\hat{\beta}_k\tau_k^2 - \mu_h\tau_k \leq (1-\tau_k)(\hat{\beta}_{k-1}\tau_{k-1}^2 + \mu_g\tau_{k-1}).
\end{equation*}
Using the first condition into the second one and noting that $1 - \tau_k = \frac{\tau_k^2}{\tau_{k-1}^2}$ and $\rho_k = \frac{\rho_0}{\tau_k^2}$, we obtain
$ \rho_0\norm{\Bb}^2 + L_h - \mu_h \leq \frac{\tau_k}{\tau_{k-1}}(L_h + \mu_g)$.
This condition holds if $\rho_0 \leq \frac{\mu_g + 2\mu_h - L_h}{2\norms{\Bb}^2} > 0$.
Using \eqref{eq:co2_key_est3}  we have the same conclusion as in Theorem~\ref{th:convergence2}.
\Eproof

\beforesec
\bibliographystyle{plain}

\begin{thebibliography}{10}

\bibitem{attouch2017rate}
H.~Attouch, Z.~Chbani, and H.~Riahi.
\newblock Rate of convergence of the {N}esterov accelerated gradient method in   the subcritical case $\alpha\leq 3$.
\newblock {\em ESAIM: Control, Optimisation and Calculus of Variations}, 2017 (DOI: https://doi.org/10.1051/cocv/2017083).

\bibitem{auslender2006interior}
A.~Auslender and M.~Teboulle.
\newblock Interior gradient and proximal methods for convex and conic
  optimization.
\newblock {\em SIAM J. Optim.}, 16(3):697--725, 2006.

\bibitem{Bauschke2011}
H.~H. Bauschke and P.~Combettes.
\newblock {\em Convex analysis and monotone operators theory in {H}ilbert
  spaces}.
\newblock Springer-Verlag, 2nd edition, 2017.

\bibitem{Beck2009}
A.~Beck and M.~Teboulle.
\newblock {A} fast iterative shrinkage-thresholding agorithm for linear inverse
  problems.
\newblock {\em SIAM J. Imaging Sci.}, 2(1):183--202, 2009.

\bibitem{Becker2011a}
S.~Becker, E.~J. Cand\`{e}s, and M.~Grant.
\newblock Templates for convex cone problems with applications to sparse signal
  recovery.
\newblock {\em Math. Program. Compt.}, 3(3):165--218, 2011.

\bibitem{Belloni2011}
A.~Belloni, V.~Chernozhukov, and L.~Wang.
\newblock Square-root {LASSO}: {P}ivotal recovery of sparse signals via conic
  programming.
\newblock {\em Biometrika}, 94(4):791--806, 2011.

\bibitem{BenTal2001}
A.~Ben-Tal and A.~Nemirovski.
\newblock {\em {L}ectures on modern convex optimization: {A}nalysis,
  algorithms, and engineering applications}, volume~3 of {\em MPS/SIAM Series
  on Optimization}.
\newblock SIAM, 2001.

\bibitem{Bertsekas1999}
D.P. Bertsekas.
\newblock {\em {N}onlinear {P}rogramming}.
\newblock Athena Scientific, 2nd edition, 1999.

\bibitem{Boyd2011}
S.~Boyd, N.~Parikh, E.~Chu, B.~Peleato, and J.~Eckstein.
\newblock Distributed optimization and statistical learning via the alternating
  direction method of multipliers.
\newblock {\em Foundations and Trends in Machine Learning}, 3(1):1--122, 2011.

\bibitem{Boyd2004}
S.~Boyd and L.~Vandenberghe.
\newblock {\em {C}onvex {O}ptimization}.
\newblock University {P}ress, Cambridge, 2004.

\bibitem{Chambolle2011}
A.~Chambolle and T.~Pock.
\newblock A first-order primal-dual algorithm for convex problems with
  applications to imaging.
\newblock {\em J. Math. Imaging Vis.}, 40(1):120--145, 2011.

\bibitem{chambolle2016ergodic}
A.~Chambolle and T.~Pock.
\newblock On the ergodic convergence rates of a first-order primal--dual
  algorithm.
\newblock {\em Math. Program.}, 159(1-2):253--287, 2016.

\bibitem{Chen1994}
G.~Chen and M.~Teboulle.
\newblock A proximal-based decomposition method for convex minimization
  problems.
\newblock {\em Math. Program.}, 64:81--101, 1994.

\bibitem{Condat2013}
L.~Condat.
\newblock A primal-dual splitting method for convex optimization involving
  {L}ipschitzian, proximable and linear composite terms.
\newblock {\em J. Optim. Theory Appl.}, 158:460--479, 2013.

\bibitem{Davis2014a}
D.~Davis.
\newblock Convergence rate analysis of primal-dual splitting schemes.
\newblock {\em SIAM J. Optim.}, 25(3):1912--1943, 2015.

\bibitem{Davis2014}
D.~Davis.
\newblock Convergence rate analysis of the forward-{D}ouglas-{R}achford
  splitting scheme.
\newblock {\em SIAM J. Optim.}, 25(3):1760--1786, 2015.

\bibitem{Davis2014b}
D.~Davis and W.~Yin.
\newblock Faster convergence rates of relaxed {P}eaceman-{R}achford and {ADMM}
  under regularity assumptions.
\newblock {\em Math. Oper. Res.}, 2014.

\bibitem{du2017selective}
Y.~Du, X.~Lin, and A.~Ruszczy\'{n}ski.
\newblock {A} {S}elective {L}inearization {M}ethod for {M}ultiblock {C}onvex
  {O}ptimization.
\newblock {\em SIAM J. Optim.}, 27(2):1102--1117, 2017.

\bibitem{Eckstein1992}
J.~Eckstein and D.~Bertsekas.
\newblock On the {D}ouglas - {R}achford splitting method and the proximal point
  algorithm for maximal monotone operators.
\newblock {\em Math. Program.}, 55:293--318, 1992.

\bibitem{Esser2010}
E.~Esser, X.~Zhang, and T.~Chan.
\newblock A general framework for a class of first order primal-dual algorithms
  for {TV}-minimization.
\newblock {\em SIAM J. Imaging Sciences}, 3(4):1015--1046, 2010.

\bibitem{Fletcher1987}
R.~Fletcher.
\newblock {\em {P}ractical {M}ethods of {O}ptimization}.
\newblock Wiley, Chichester, 2nd edition, 1987.

\bibitem{Goldfarb2012}
D.~Goldfarb, S.~Ma, and K.~Scheinberg.
\newblock Fast alternating linearization methods of minimization of the sum of
  two convex functions.
\newblock {\em Math. Program., Ser. A}, 141(1):349--382, 2012.

\bibitem{Goldstein2012}
T.~Goldstein, B.~O'Donoghue, and S.~Setzer.
\newblock {F}ast {A}lternating {D}irection {O}ptimization {M}ethods.
\newblock {\em SIAM J. Imaging Sci.}, 7(3):1588--1623, 2012.

\bibitem{Grant2006}
M.~Grant, S.~Boyd, and Y.~Ye.
\newblock Disciplined convex programming.
\newblock In L.~Liberti and N.~Maculan, editors, {\em Global Optimization: From
  Theory to Implementation}, Nonconvex Optimization and its Applications, pages
  155--210. Springer, 2006.

\bibitem{He2012}
B.S. He and X.M. Yuan.
\newblock On the ${O}(1/n)$ convergence rate of the {D}ouglas-{R}achford
  alternating direction method.
\newblock {\em SIAM J. Numer. Anal.}, 50:700--709, 2012.

\bibitem{Jaggi2013}
M.~Jaggi.
\newblock {R}evisiting {F}rank-{W}olfe: {P}rojection-{F}ree {S}parse {C}onvex
  {O}ptimization.
\newblock {\em JMLR W\&CP}, 28(1):427--435, 2013.

\bibitem{johnson2013accelerating}
R.~Johnson and T.~Zhang.
\newblock Accelerating stochastic gradient descent using predictive variance
  reduction.
\newblock In {\em Burges, C.J.C., Bottou, L., Welling, M., Ghahramani, Z., and Weinberger, K.Q. (eds), Advances in Neural Information Processing Systems (NIPS)},
Lake Tahoe, NIPS Foundation Inc.,  pp. 315--323, 2013.

\bibitem{kiwiel1999proximal}
K.C. Kiwiel, C.~H. Rosa, and A.~Ruszczy\'{n}ski.
\newblock Proximal decomposition via alternating linearization.
\newblock {\em SIAM J. Optim.}, 9(3):668--689, 1999.

\bibitem{Lan2013}
G.~Lan and R.D.C. Monteiro.
\newblock Iteration complexity of first-order penalty methods for convex
  programming.
\newblock {\em Math. Program.}, 138(1):115--139, 2013.

\bibitem{necoara2015linear}
I.~Necoara, Y.~Nesterov, and F.~Glineur.
\newblock Linear convergence of first order methods for non-strongly convex optimization.
\newblock {\em Math. Program.}, pages 1--39 (Online First), 2016.

\bibitem{necoara2015complexity}
I.~Necoara, A.~Patrascu, and F.~Glineur.
\newblock Complexity of first-order inexact {L}agrangian and penalty methods for conic convex programming.
\newblock {\em Optim. Method Softw.}, pages 1--31 (Online First), 2017.

\bibitem{Necoara2009}
I.~Necoara and J.A.K. Suykens.
\newblock Interior-point {L}agrangian decomposition method for separable convex
  optimization.
\newblock {\em J. Optim. Theory and Appl.}, 143(3):567--588, 2009.

\bibitem{Nemirovskii1983}
A.~Nemirovskii and D.~Yudin.
\newblock {\em {P}roblem {C}omplexity and {M}ethod {E}fficiency in
  {O}ptimization}.
\newblock Wiley Interscience, 1983.

\bibitem{Nesterov1983}
Y.~Nesterov.
\newblock A method for unconstrained convex minimization problem with the rate
  of convergence $\mathcal{O}(1/k^2)$.
\newblock {\em Doklady AN SSSR}, 269:543--547, 1983.
\newblock Translated as Soviet Math. Dokl.

\bibitem{Nesterov2004}
Y.~Nesterov.
\newblock {\em {I}ntroductory lectures on convex optimization: {A} basic
  course}, volume~87 of {\em Applied Optimization}.
\newblock Kluwer Academic Publishers, 2004.

\bibitem{Nesterov2012}
Y.~Nesterov.
\newblock Efficiency of coordinate descent methods on huge-scale optimization
  problems.
\newblock {\em SIAM J. Optim.}, 22(2):341--362, 2012.

\bibitem{Nesterov2007}
Y.~Nesterov.
\newblock Gradient methods for minimizing composite objective function.
\newblock {\em Math. Program.}, 140(1):125--161, 2013.

\bibitem{van2017smoothing}
V.~Q. Nguyen, O.~Fercoq, and V.~Cevher.
\newblock Smoothing technique for nonsmooth composite minimization with linear operator.
\newblock {\em ArXiv preprint (arXiv:1706.05837)}, 2017.

\bibitem{Nocedal2006}
J.~Nocedal and S.J. Wright.
\newblock {\em {N}umerical {O}ptimization}.
\newblock Springer Series in Operations Research and Financial Engineering.
  Springer, 2 edition, 2006.

\bibitem{Odonoghue2012}
B.~O'Donoghue and E.~Candes.
\newblock {Adaptive Restart for Accelerated Gradient Schemes}.
\newblock {\em Found. Comput. Math.}, 15:715--732, 2015.

\bibitem{Ouyang2014}
Y.~Ouyang, Y.~Chen, G.~Lan, and E.~JR. Pasiliao.
\newblock An accelerated linearized alternating direction method of multiplier.
\newblock {\em SIAM J. Imaging Sci.}, 8(1):644--681, 2015.

\bibitem{Recht2010}
B.~Recht, M.~Fazel, and P.A. Parrilo.
\newblock Guaranteed minimum-rank solutions of linear matrix equations via
  nuclear norm minimization.
\newblock {\em SIAM Review}, 52(3):471--501, 2010.

\bibitem{Rockafellar1970}
R.~T. Rockafellar.
\newblock {\em {C}onvex {A}nalysis}, volume~28 of {\em Princeton Mathematics
  Series}.
\newblock Princeton University Press, 1970.

\bibitem{shefi2016rate}
R.~Shefi and M.~Teboulle.
\newblock On the rate of convergence of the proximal alternating linearized
  minimization algorithm for convex problems.
\newblock {\em EURO J. Comput. Optim.}, 4(1):27--46, 2016.

\bibitem{sra2012optimization}
S.~Sra, S.~Nowozin, and S.~J. Wright.
\newblock {\em {O}ptimization for {M}achine {L}earning}.
\newblock Mit Press, 2012.

\bibitem{TranDinh2017f}
Q.~Tran-Dinh, A.~Alacaoglu, O.~Fercoq, and V.~Cevher.
\newblock {S}elf-{A}daptive {D}ouble-{L}oop {P}rimal-{D}ual {A}lgorithm for
  {N}onsmooth {C}onvex {O}ptimization.
\newblock {\em ArXiv preprint (arXiv:1808.04648)}, pages 1--38, 2018.

\bibitem{Tran-Dinh2014a}
Q.~Tran-Dinh and V.~Cevher.
\newblock {A} {P}rimal-{D}ual {A}lgorithmic {F}ramework for {C}onstrained
  {C}onvex {M}inimization.
\newblock {\em ArXiv preprint (arXiv:1406.5403), Tech. Report.}, pages 1--54,
  2014.

\bibitem{TranDinh2014b}
Q.~Tran-Dinh and V.~Cevher.
\newblock {C}onstrained {C}onvex {M}inimization via {M}odel-{B}ased {E}xcessive
  {G}ap.
\newblock In {\em Proc. of the Neural Information Processing Systems (NIPS)},
  volume~27, pages 721--729, Montreal, Canada, December 2014.

\bibitem{TranDinh2015b}
Q.~Tran-Dinh, O.~Fercoq, and V.~Cevher.
\newblock A smooth primal-dual optimization framework for nonsmooth composite
  convex minimization.
\newblock {\em SIAM J. Optim.}, pages 1--35, 2018.

\bibitem{tran2015construction}
Quoc Tran-Dinh.
\newblock Construction and iteration-complexity of primal sequences in
  alternating minimization algorithms.
\newblock {\em ArXiv preprint (arXiv:1511.03305)}, 2015.

\bibitem{TranDinh2015d}
Quoc Tran-Dinh.
\newblock {A}daptive {S}moothing {A}lgorithms for {N}onsmooth {C}omposite
  {C}onvex {M}inimization.
\newblock {\em Comput. Optim. Appl.}, 66(3):425--451, 2016.

\bibitem{Tran-Dinh2018}
Quoc Tran-Dinh and Y.~Zhu.
\newblock {N}on-{E}rgodic {A}lternating {P}roximal {A}ugmented {L}agrangian
  {A}lgorithms with {O}ptimal {R}ates.
\newblock {\em Conference on Neural Information Processing Systems (NIPS)},
Montr\'{e}al, Canada, NIPS Foundation Inc., pp. 1--9, December 2018.

\bibitem{Tseng1991a}
P.~Tseng.
\newblock Applications of splitting algorithm to decomposition in convex
  programming and variational inequalities.
\newblock {\em SIAM J. Control Optim.}, 29:119--138, 1991.

\bibitem{tseng2008accelerated}
P.~Tseng.
\newblock On accelerated proximal gradient methods for convex-concave
  optimization.
\newblock {\em Submitted to SIAM J. Optim}, 2008.

\bibitem{vu2013splitting}
C.~B. Vu.
\newblock A splitting algorithm for dual monotone inclusions involving
  co-coercive operators.
\newblock {\em Advances in Computational Mathematics}, 38(3):667--681, 2013.

\bibitem{woodworth2016tight}
B.~E. Woodworth and N.~Srebro.
\newblock Tight complexity bounds for optimizing composite objectives.
\newblock In {\em  Lee, D. D., Sugiyama, M., Luxburg, U. V.,  Guyon, I., and Garnett, R. (eds), Advances in neural information processing systems (NIPS)},
 Barcelona, Spain, NIPS Foundation Inc., pp. 3639--3647, 2016.

\bibitem{xu2017accelerated}
Y.~Xu.
\newblock Accelerated first-order primal-dual proximal methods for linearly
  constrained composite convex programming.
\newblock {\em SIAM J. Optim.}, 27(3):1459--1484, 2017.

\bibitem{zou2005regularization}
H.~Zou and T.~Hastie.
\newblock Regularization and variable selection via the elastic net.
\newblock {\em Journal of the Royal Statistical Society: Series B},
  67(2):301--320, 2005.

\end{thebibliography}

\end{document}